\newtheorem{theorem}{Theorem}
\newtheorem{lem}{Lemma}
\newtheorem{prop}{Proposition}
\theoremstyle{definition}
\newtheorem{defn}{Definition}
\theoremstyle{remark}
\newtheorem{rem}{Remark}
\newtheorem{assump}{Assumption}
\newcommand{\RR}{\mathbb{R}}
\newcommand{\OX}{\overline{X}}
\newcommand{\R}{\mathbb{R}}
\numberwithin{equation}{section} \numberwithin{lem}{section}
\numberwithin{cor}{section} \numberwithin{rem}{section}
\newenvironment{proof*}
{\pushQED{\qed}}
{\popQED\endtrivlist}
\title[]{Mathematical Analysis of the PDE Model for the Consensus-based Optimization}
\author{Jinhuan Wang$^{\,1}$, Keyu Li$^{\,1}$, and Hui Huang$^{\,2}$}
\thanks{The work of J. Wang is patially supported by National Natural Science Foundation of China, Grants No. 12171218 and Liaoning Provincial Natural Science
Foundation Program, Grants No. 2024-MS-002.}
\thanks{Corresponding author: Hui Huang}
\begin{document}
\maketitle
\begin{center}
{\footnotesize
1-School of Mathematics and Statistics, Liaoning University, Shenyang, 110036, P. R. China \\
email:  wangjh@lnu.edu.cn; lky\_lnu@163.com\\
 \smallskip
2-Department of Mathematics and Scientific Computing, Karl-Franzens-Universit\"{a}t Graz, 8010 Graz, Austria
\\
email: hui.huang1@ucalgary.ca
}
\end{center}
\maketitle
\date{}
\begin{abstract}
In this paper, we develop an analytical framework for the partial differential equation underlying the consensus-based optimization model. The main challenge arises from the nonlinear, nonlocal nature of the consensus point, coupled with a diffusion term that is both singular and degenerate. By employing a regularization procedure in combination with a compactness argument, we establish the global existence and uniqueness of weak solutions in $L^\infty(0,T;L^1\cap L^\infty(\R^d))$. Furthermore, we show that the weak solutions exhibit improved $H^2$-regularity when the initial data is regular.

\end{abstract}

{\small {\bf Keywords:} Consensus-based optimization, Degenerate and singular equations, Regular solutions, Uniqueness}

\newtheorem{definition}{Definition}[section]
\newtheorem{lemma}{Lemma}[section]
\newtheorem{proposition}{Proposition}[section]
\newtheorem{corollary}{Corollary}[section]
\newtheorem{remark}{Remark}[section]
\renewcommand{\theequation}{\thesection.\arabic{equation}}
\catcode`@=11 \@addtoreset{equation}{section} \catcode`@=12

\section{Introdution}
\subsection{Background}
In this paper, we investigate the partial differential equation (PDE) that underlies the consensus-based optimization (CBO) model. CBO, introduced in \cite{carrillo2018analytical,pinnau2017consensus}, is a gradient-free optimization technique inspired by the collective behavior of interacting agents, modeled through consensus dynamics. It belongs to the class of optimization strategies known as \textit{metaheuristics} \cite{boussaid2013survey,borghi2024kinetic}, which are high-level, problem-independent algorithmic frameworks designed to find approximate solutions for complex optimization problems, particularly when exact methods are computationally infeasible. Several well-known metaheuristic approaches include evolutionary programming \cite{yao1999evolutionary}, the Metropolis-Hastings sampling algorithm \cite{chib1995understanding}, genetic algorithms \cite{holland1992genetic}, ant colony optimization (ACO) \cite{blum2005ant}, and simulated annealing (SA) \cite{bertsimas1993simulated}. Unlike traditional optimization methods that rely on gradient information, CBO utilizes swarm intelligence principles, wherein a population of particles evolves over time to reach a consensus on the optimal solution. This approach is particularly advantageous in high-dimensional, non-convex, or noisy landscapes, where gradient-based methods may struggle. By simulating interactions and communication among agents, CBO effectively balances exploration and exploitation, facilitating robust convergence to global optima. 

Like other metaheuristic methods, the versatility and simplicity of CBO have contributed to its growing adoption in solving a wide range of optimization problems across machine learning, engineering, and the sciences. These applications include, but are not limited to, global optimization on compact manifolds \cite{fornasier2021JMLR, ha2022stochastic}, handling general constraints \cite{borghi2023constrained, beddrich2024constrained}, optimizing cost functions with multiple minimizers \cite{bungert2022polarized, fornasier2025pde}, addressing multi-objective problems \cite{borghi2022consensus}, solving stochastic optimization problems \cite{bonandin2024consensus}, tackling high-dimensional machine learning challenges \cite{fornasier2022anisotropic, carrillo2021consensus}, and sampling from distributions \cite{carrillo2022consensus}. The CBO framework has been further enhanced through the incorporation of momentum \cite{chen2022consensus}, memory effects \cite{huang2024self, totzeck2020consensus,riedl2024leveraging}, second-order algorithms \cite{byeon2025consensus, cipriani2022zero, huang2023global}, optimal control strategies \cite{huang2024fast}, mirror descent \cite{bungert2025mirrorcbo}, random batch interactions \cite{ko2022convergence}, truncated noises \cite{fornasier2025consensus}, smoothing effects \cite{wei2025smoothing} and jump processes \cite{kalise2023consensus}. Additionally, CBO has been successfully applied to multi-player games \cite{chenchene2023consensus}, min-max problems \cite{huang2024consensus, borghi2024particle}, multi-level optimization \cite{herty2024multiscale,trillos2024cb}, and clustered federated learning \cite{carrillo2023fedcbo}.

Let us consider  the following optimization problem
\begin{equation*}
	\text{Find }\; x_* \in \arg\min_{x\in \RR^d} f(x),\,
\end{equation*}
where $f$ can be a non-convex non-smooth objective function that one wishes to minimize. Then the CBO dynamic considers the following system of $N$ interacting particles, denoted as $\{X_{\cdot}^i\}_{i=1}^N$, which satisfies
\begin{align}\label{CBO: particle}
   dX_t^i = -\lambda\big(X_t^i - m_\alpha^f(\rho_t^N)\big)dt + \sigma |X_t^i - m_\alpha^f(\rho_t^N)|dB_t^i\qquad i=1,\cdots, N=:[N],
\end{align}
where $\lambda,\sigma>0$ are real constants, $\rho_t^N:=\frac{1}{N}\delta_{X_t^i}$ is the empirical measure associated to particle system \eqref{CBO: particle}, and the current consensus point $ m_\alpha^f(\rho_t^N)$ satisfies
\begin{align}
    m_\alpha^f(\rho_t^N)
    :=\frac{\int_{\R^d}xe^{-\alpha f(x)}d\rho_t^N(x)}{\int_{\R^d}e^{-\alpha f(x)}d\rho_t^N(x)}
    =\frac{\frac{1}{N}\Sigma_{i=1}^{N}X_t^ie^{-\alpha f(X_t^i)}}{\frac{1}{N}\Sigma_{i=1}^{N}e^{-\alpha f(X_t^i)}}\,,
\end{align}
which is a convex combination of particle positions weighed according to the weight function $w_\alpha^f(x):=e^{-\alpha f(x)}$.   This choice of weight function $\omega_\alpha^f(\cdot)$ is motivated by the well-known Laplace's principle \cite{miller2006applied,MR2571413}, see also \cite[Appendix A.2]{huang2024consensus}, which formally implies that
\begin{equation*}
	 m_\alpha^f(\rho_t^N)\approx \arg\min\{f(X_t^1),\dots,f(X_t^N)\}
\end{equation*}
for large values of $\alpha$.
Moreover, the particle system is initialized with independent and identically distributed (i.i.d.) data $\{X_{0}^i\}_{i=1}^N$, where each $X_0^i$ is distributed according to a given measure $\rho_0 \in \mathcal{P}(\RR^d)$ for all $i \in [N]$.  Notably, the well-posedness of the particle system \eqref{CBO: particle} has been showned in \cite[Theorem 2.1]{carrillo2018analytical} when $\rho_0\in \mathcal{P}_2(\RR^d)$.

When the consensus is reached $(t=\infty )$, one expects the following
\begin{equation*}
	\frac{1}{N}\sum_{i=1}^{N}X_\infty^i\approx m_\alpha^f(\rho_\infty^N)\approx x_*
\end{equation*}
holds for sufficient large $\alpha$, thus the global minimizer $x_*$ can be founded.  In the following Figure \ref{fig:Rastrigin}, using standard Euler–Maruyama scheme, we successfully apply CBO particle system \eqref{CBO: particle}  to the Rastrigin function 
\[
R(x) := 10+(x-1)^2-10\cos(2\pi(x-1))\,,
\]
which is a popular benchmark nonconvex function in optimization with multiple local minimizers and a unique global minimizer.
\begin{figure}[h]
	\centering
	\includegraphics[width=0.63\textwidth]{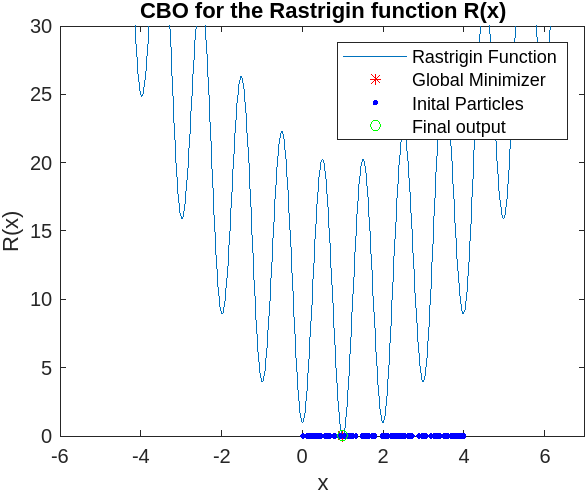}
	\caption{We apply the CBO particle system \eqref{CBO: particle} to the Rastrigin function $R(x)$, which has a unique global minimizer $x_*=1$ (the red star). The initial particles (the blue dots) are sampled uniformly in $[0,4]$. The simulation parameters are $N=100,\lambda=1,\sigma=1,\alpha=10^{15}, d t=0.01$ and $T=100$. The final output is $m_\alpha^f(\rho_T^N)$ (the green circle).
	}
	\label{fig:Rastrigin}
\end{figure}

\subsection{Motivation and Contribution}
The convergence proof \cite{carrillo2018analytical,fornasier2024consensus,huang2025faithful} of the CBO method is typically conducted in the context of large-particle limit \cite{gerber2023mean,huang2022mean,huang2024uniform,koss2024mean,fornasier2020consensus}. Specifically, rather than analyzing the $N$--particle system \eqref{CBO: particle} directly, one considers the limit as $N$ approaches infinity and examines the corresponding McKean-Vlasov process $\OX_{\cdot}$, which is governed by the following equation
\begin{align}\label{CBO: mfq}
   d\overline{X}_t = -\lambda\big(\overline{X}_t - m_\alpha^f(\rho_t)\big)dt + \sigma |\overline{X}_t - m_\alpha^f(\rho_t)|dB_t,
\end{align}
where
\begin{align*}
    m_\alpha^f(\rho_t)
    =\frac{\int_{\R^d}xe^{-\alpha f(x)}d\rho_t(x)}{\int_{\R^d}e^{-\alpha f(x)}d\rho_t(x)}\quad \mbox{with }\rho_t=\mbox{Law}(\OX_t)\,.
\end{align*}
Applying It\^{o}'s formula one can easily see that $\rho$ satisfies the following nonlinear Fokker-Planck equation:
\begin{align}\label{pde}
\begin{cases}
    &\partial_t\rho = \lambda\nabla\cdot\big((x-m_\alpha^f(\rho_t))\rho\big) + \frac{\sigma^2}{2}\Delta\big(|x-m_\alpha^f(\rho_t)|^2\rho\big),\\
	&\rho(0,x)=\rho_0(x)\,.
\end{cases}	
\end{align}

Notably, the well-posedness of \eqref{CBO: mfq} was established in \cite[Theorem 3.1, Theorem 3.2]{carrillo2018analytical} under the assumption that $0\le\rho_0 \in \mathcal{P}_4(\mathbb{R}^d)$, ensuring the existence of a measure solution $\rho \in \mathcal{C}([0,T],\mathcal{P}_2(\mathbb{R}^d))$ to the PDE \eqref{pde}. However, proving the global convergence of CBO methods may require more regular solutions; see, for instance, \cite{fornasier2021JMLR,trillos2024cb,fornasier2025regularity}.

 This motivates the present work, which aims to develop an analytical framework for the PDE \eqref{pde} with the objective of obtaining regular solutions. 
The main difficulty arises from the consensus term $m_\alpha^f(\rho_t)$, which is highly nonlinear and nonlocal. Moreover, the diffusion term $\Delta\big(|x - m_\alpha^f(\rho_t)|^2 \rho\big)$ presents significant challenges: it becomes degenerate when $x = m_\alpha^f(\rho_t)$ and singular as $x \to \infty$. Consequently, standard PDE results do not directly apply in this setting. To overcome these difficulties we can consider the following regularization problem
for $R,\varepsilon>0$
\begin{align}\label{pdesR1}
\begin{cases}
     & \partial_t\rho_{\varepsilon,R} = \lambda\nabla\cdot\Big(\big(x- m_\alpha^f(\rho_{\varepsilon})\big)\phi^R\big(x- m_\alpha^f(\rho_{\varepsilon})\big)\rho_{\varepsilon,R}\Big)\\ 
   &\qquad\qquad\qquad+ \frac{\sigma^2}{2}\Delta\big(\big(\big|(x- m_\alpha^f(\rho_{\varepsilon}))\phi^R(x- m_\alpha^f(\rho_{\varepsilon}))\big|^2+\varepsilon^2\big)\rho_{\varepsilon,R}\big),\\
  & \rho_{\varepsilon,R}(0,x)=\rho_{0\varepsilon}(x):=\rho_{0}(x)*J_{\varepsilon}(x),
\end{cases}
\end{align}
where $J_{\varepsilon}(x)=\frac{1}{\varepsilon^d}J(\frac{x}{\varepsilon})$, $J(x)$ is the standard mollifier, then $\rho_{0\varepsilon}$ is nonnegative and it is easy to check that  $\rho_{0\varepsilon}$ satisfies the following inequalities that for any $p\in[1,\infty]$,
\begin{align}
&\|\rho_{0\varepsilon}\|_{L^p{(\R^d)}}\le \|\rho_0\|_{L^p{(\R^d)}},\quad\int_{\R^d}|x|^4\rho_{0\varepsilon}dx\le C\Big(\int_{\R^d}|x|^4\rho_{0}dx + \varepsilon^4\Big).
\end{align}
Moreover, here we take $\phi^R(v):= \phi(|v|/R),\ v\in\mathbb{R}^d$ with $\phi(v)\in C^2[0,+\infty)$ satisfying
\begin{eqnarray*}
\phi(v) =
\begin{cases}
1,& v\le 1,\\
0,& v\ge 2,
\end{cases}
\quad 0\le\phi\le1,\ |\phi'|\le C\ \mbox{and}\ |\phi''|\le C.
\end{eqnarray*}
Then for all $i,j\in[d]$ we have 
\begin{align}\label{vphi}
&\|\phi^R(v)\|_{L^\infty(\R^d)} \le 1,\quad\|\partial_{v_i}\phi^R(v)\|_{L^\infty(\R^d)} \le \frac{C}{R}\ \mathrm{and}\ \|\partial_{v_i,v_j}^2\phi^R(v)\|_{L^\infty(\R^d)} \le \frac{C}{R^2}.
\end{align}
Now for any fixed $R,\varepsilon>0$ the new regularization PDE \eqref{pdesR1} is nondegenerate because of $\varepsilon$ and nonsingular because of $\phi^R$, and the classical parabolic equation theory implies that the above regularization problem has a global smooth non-negative solution $\rho_{\varepsilon,R}$ for $t>0$ if the initial data are non-negative.  

The main goal of this paper is to prove that there exists a unique solution $\rho$ to PDE \eqref{pde} in the sense of the following definition. 
\setcounter{theorem}{0}
\begin{defn}\label{def}
(Definition of weak solutions). For any fixed $T>0$, $\rho$ is called a weak solution to the problem \eqref{pde} on $[0,T]$ if $\rho$
satisfies
\begin{enumerate}[(i)]
\item regularities
\begin{align*}
&\partial_t\rho \in L^2(0,T;W_{loc}^{-2,\frac{4}{3}}(\R^d)), \quad |x|^4\rho\in L^\infty(0,T;L^1(\R^d)),\\
&\rho\in L^\infty(0,T;L^1\cap L^2(\RR^d))\cap L^2(0,T;W_{loc}^{1, \gamma}(\RR^d)),~~\gamma\in(1,\frac{2d}{2+d});
\end{align*}
\item for any $\varphi\in C_c^{\infty}((0,T)\times \mathbb{R}^d)$, the following integration equality holds
\begin{align*}
\int_0^T\int_{\R^d}\partial_t\rho\varphi+\lambda\Big(\big(x-m_\alpha^f(\rho)\big)\rho\Big)\cdot\nabla\varphi+\frac{\sigma^2}{2}\nabla\Big(\big|x-m_\alpha^f(\rho)\big|^2\rho\Big)\cdot\nabla\varphi dxdt =0.
\end{align*}
\end{enumerate}
\end{defn}
During the preparation of this work, we became aware of the recent study~\cite{fornasier2025regularity}, which also investigates the well-posedness of the CBO PDE \eqref{pde}. In particular, Theorem~2.5 in~\cite{fornasier2025regularity} establishes the existence of regular solutions for a class of general linear PDEs~\cite[(11)--(12)]{fornasier2025regularity} using a Galerkin approximation. By combining this result with the existence of measure-valued solutions to the original nonlinear CBO PDE \eqref{pde}, as shown in~\cite[Theorem 3.2]{carrillo2018analytical}, the authors derive the existence of regular solutions for the nonlinear problem \cite[Section 2.4]{fornasier2025regularity}. 
Our analysis, developed independently and concurrently, adopts a different methodological approach. In contrast to~\cite{fornasier2025regularity}, we establish the existence of regular solutions via a compactness argument applied to a regularization version of the CBO PDE, without relying on the intermediate step of constructing measure-valued solutions. Our approach offers a direct and self-contained framework that may be more accessible to researchers in the PDE community.

The result on existence and uniqueness of solutions is as follows.
\begin{theorem}\label{thm1}
Let $f$ satisfy Assumption \ref{ass}, and the non-negative initial data $\rho_0\in L^1\cap L^2(\R^d)\cap \mathcal{P}_4(\R^d)$. Then, for any $T>0$, there exists a unique weak solution to the model \eqref{pde} in the sense of Definition \ref{def}.      
\end{theorem}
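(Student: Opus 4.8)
The plan is to carry out a regularization-compactness-uniqueness scheme in three stages, using the smooth solutions $\rho_{\varepsilon,R}$ of \eqref{pdesR1} as the starting point.

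\textbf{Step 1: Uniform a priori estimates for $\rho_{\varepsilon,R}$.} First I would establish bounds on the regularized solutions that are uniform in both $\varepsilon$ and $R$. Mass is conserved, $\|\rho_{\varepsilon,R}(t)\|_{L^1}=\|\rho_{0\varepsilon}\|_{L^1}\le\|\rho_0\|_{L^1}$, and nonnegativity is preserved. For the fourth moment I would test \eqref{pdesR1} against $|x|^4$; the drift and diffusion contributions produce terms controlled by lower moments after using that $|v\phi^R(v)|\le 2R$ trivially but, more importantly, $|v\phi^R(v)|\le|v|$ uniformly, together with the fact that $m_\alpha^f(\rho_{\varepsilon})$ stays bounded because the weight $e^{-\alpha f}$ is bounded below on compact sets and $f$ grows at most quadratically (this is where Assumption~\ref{ass} enters); a Gr\"onwall argument then gives $\sup_{t\le T}\int|x|^4\rho_{\varepsilon,R}\,dx\le C(T)$ uniformly. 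For the $L^\infty$ bound I would use that the diffusion coefficient $a_{\varepsilon,R}:=|v\phi^R(v)|^2+\varepsilon^2\ge\varepsilon^2>0$ makes the equation uniformly parabolic for fixed $\varepsilon$, but to get an $\varepsilon$-uniform $L^\infty$ bound I would instead rewrite the equation in non-divergence form and apply a maximum-principle / De Giorgi-type iteration, or test against $\rho_{\varepsilon,R}^{p-1}$ and let $p\to\infty$, exploiting that the first-order coefficients are bounded on the support-relevant region; this yields $\|\rho_{\varepsilon,R}(t)\|_{L^\infty}\le C(T)\|\rho_0\|_{L^\infty}$. The $L^2$ bound follows by interpolation, or directly by an energy estimate testing against $\rho_{\varepsilon,R}$, which also produces $\int_0^T\int a_{\varepsilon,R}|\nabla\rho_{\varepsilon,R}|^2\,dx\,dt\le C$; since $a_{\varepsilon,R}$ degenerates like $|x|^{-2}$ is false — rather $a_{\varepsilon,R}\sim|x-m|^2$ near the diagonal and is bounded by $(2R)^2$ — one extracts a gradient bound in $L^2$ weighted by $a_{\varepsilon,R}$, which by H\"older against the fourth-moment bound gives the unweighted $W^{1,\gamma}_{loc}$ estimate for $\gamma<\frac{2d}{d+2}$ claimed in Definition~\ref{def}. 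Finally, from the weak formulation one reads off $\partial_t\rho_{\varepsilon,R}\in L^2(0,T;W^{-2,4/3}_{loc})$ uniformly.

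\textbf{Step 2: Passage to the limit $R\to\infty$, then $\varepsilon\to 0$.} With the uniform bounds in hand, the Aubin--Lions--Simon lemma (using the $L^2_tW^{1,\gamma}_{loc,x}$ bound and the $\partial_t$ bound) gives strong $L^2_{loc}$ convergence of a subsequence $\rho_{\varepsilon,R}\to\rho_\varepsilon$ as $R\to\infty$, which is enough to pass to the limit in the nonlinear terms: the moment bound prevents mass escaping to infinity, so $m_\alpha^f(\rho_{\varepsilon,R})\to m_\alpha^f(\rho_\varepsilon)$ by dominated convergence (continuity of $\rho\mapsto m_\alpha^f(\rho)$ in the relevant topology, which I would record as a lemma), and $\phi^R(v)\to 1$ pointwise with $0\le\phi^R\le 1$ handles the cutoff. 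Repeating the same compactness argument for the family $\{\rho_\varepsilon\}$ as $\varepsilon\to0$ — the estimates being $\varepsilon$-uniform — yields a limit $\rho$ satisfying all the regularity requirements (i) of Definition~\ref{def} by weak-$*$ lower semicontinuity, and the integral identity (ii) by passing to the limit term by term; the degenerate term $\varepsilon^2\Delta\rho_\varepsilon\to0$ in $W^{-2,4/3}_{loc}$ since $\rho_\varepsilon$ is bounded in $L^{4/3}_{loc}$.

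\textbf{Step 3: Uniqueness.} Given two weak solutions $\rho_1,\rho_2$ with the stated regularity, I would estimate their difference. Because the equation is a second-order parabolic equation in divergence form with the nonlocal coefficient $m_\alpha^f$, the natural route is a duality / $H^{-1}$-type argument: set $w=\rho_1-\rho_2$, write the equation for $w$, and test against the solution of a backward dual problem, or more concretely control $\|w(t)\|_{H^{-1}}$ or a weighted $L^2$ norm using that $m_\alpha^f$ is Lipschitz in an appropriate weak norm of $\rho$ on the set where the $L^1\cap L^\infty$ and fourth-moment bounds hold — this Lipschitz estimate, $|m_\alpha^f(\rho_1)-m_\alpha^f(\rho_2)|\le C\,d(\rho_1,\rho_2)$ with $d$ a $1$-Wasserstein-like or weighted negative-Sobolev distance, is standard from the CBO literature (cf.\ \cite{carrillo2018analytical}) given the growth control on $f$. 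A Gr\"onwall inequality then forces $w\equiv0$.

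\textbf{Main obstacle.} The genuinely delicate point is extracting estimates that are uniform as $\varepsilon\to0$ in the presence of the simultaneously degenerate (at $x=m_\alpha^f$) and singular-at-infinity diffusion: the $L^\infty$ bound on $\rho$ and the local gradient bound must not deteriorate as the ellipticity constant $\varepsilon^2$ vanishes, which forces one to work with the intrinsic (degenerate) weighted energy $\int a_{\varepsilon,R}|\nabla\rho_{\varepsilon,R}|^2$ and to combine it carefully with the moment bounds to recover unweighted local Sobolev control — balancing the exponent $\gamma<\frac{2d}{d+2}$ is exactly the outcome of this balancing act. The second subtle point is the continuity/Lipschitz dependence of $m_\alpha^f(\rho)$ in a norm weak enough to be stable under the available compactness yet strong enough to close the uniqueness Gr\"onwall estimate.
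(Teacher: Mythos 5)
Your existence argument (Steps 1--2) is essentially the paper's own strategy: uniform $L^\infty_t L^2_x$ and weighted-gradient energy estimates, fourth-moment bounds via testing against $|x|^4$, the H\"older trade of the degenerate weight $|x-m_\alpha^f|^2$ against the local integrability of $|x-m_\alpha^f|^{-\gamma\hat q}$ to get $W^{1,\gamma}_{loc}$ with $\gamma<\tfrac{2d}{d+2}$, Aubin--Lions plus moment tightness, and continuity of $\mu\mapsto m_\alpha^f(\mu)$ (the paper makes this precise via the $W_2$-Lipschitz estimate of Lemma \ref{lem: useful estimates} together with Lemma \ref{lem3.6}, which upgrades $L^q$ convergence plus fourth-moment bounds to $W_2$ convergence). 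Two small corrections: the paper couples the parameters as $R=1/\varepsilon$ and passes to a single limit, whereas your iterated limit $R\to\infty$ then $\varepsilon\to0$ would require you to make sense of the intermediate problem with the uncut, singular-at-infinity coefficient $|x-m_\alpha^f(\rho_\varepsilon)|^2+\varepsilon^2$; and Aubin--Lions with $W^{1,\gamma}_{loc}$, $\gamma<\tfrac{2d}{d+2}<2$, gives strong convergence only in $L^q_{loc}$ for $q<\tfrac{d\gamma}{d-\gamma}<2$, not in $L^2_{loc}$ as you claim (this weaker convergence, combined with weak $L^2$ convergence of the gradients, is what the paper actually uses). Your $L^\infty$ bound is neither available (Theorem \ref{thm1} only assumes $\rho_0\in L^1\cap L^2$) nor needed here.

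The genuine gap is in Step 3. Your proposed direct Gr\"onwall/duality estimate on $w=\rho_1-\rho_2$ is not carried out, and the obstruction you yourself flag at the end is exactly where it breaks: the available Lipschitz estimate for $m_\alpha^f$ is in $W_2$ (or a weighted $L^1$ norm), not in the $L^2$ or $H^{-1}$ norms your energy inequality would produce, and there is no evident way to close the loop between $|m_\alpha^f(\rho_1)-m_\alpha^f(\rho_2)|$ and $\|w\|_{H^{-1}}$ under only the regularity of Definition \ref{def}. Moreover, the difference of the two diffusion coefficients, $|x-m_\alpha^f(\rho_1)|^2-|x-m_\alpha^f(\rho_2)|^2=(m_2-m_1)\cdot(2x-m_1-m_2)$, multiplies $\rho_2$ inside a Laplacian, and controlling the resulting cross terms requires either second derivatives of $\rho_2$ or global weighted gradient integrability that weak solutions are not known to possess. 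The paper sidesteps all of this: it takes an arbitrary weak solution $\rho$, freezes $m_\alpha^f(\rho_t)$ to obtain a \emph{linear} PDE and the associated linear SDE \eqref{linearSDE}, shows by a straightforward $L^2$ energy estimate that the linear PDE has a unique solution (hence $\rho=\mathrm{Law}(X_t)$), and then concludes that $(X,\rho)$ solves the nonlinear McKean--Vlasov SDE \eqref{CBO: mfq}, whose strong uniqueness is already known from \cite[Theorem 3.2]{carrillo2018analytical}. If you want to keep a purely PDE-based uniqueness proof you would need to supply the missing stability estimate relating $m_\alpha^f$ to your chosen energy norm; otherwise the linearization-plus-SDE route is the one that actually closes.
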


Moreover, we obtain the boundedness and $H^2$-regularity of solutions.
\begin{theorem}\label{thm2}
Let $f\in\mathcal{C}(\R^d)$ satisfy Assumption \ref{ass}. Then the solution $\rho$ to problem \eqref{pde} satisfies 
    \begin{itemize}
        \item[(i)] if the non-negative initial data $\rho_0\in L^1\cap L^\infty(\R^d)\cap \mathcal{P}_4(\R^d)$, then
	   \begin{equation*}
	   \rho\in L^\infty(0,T;L^p(\R^d)),\quad p\in[2,\infty]; 
	   \end{equation*}
	   \item[(ii)] if the non-negative initial data $\rho_0\in L^1\cap H^2(\R^d)\cap \mathcal{P}_4(\R^d)$, then
	   \begin{equation*}
	   \rho\in L^\infty(0,T;H^2(\R^d)),\quad \partial_t\rho\in L^\infty(0,T;L^2_{loc}(\mathbb{R}^d)).
	   \end{equation*}
	\end{itemize}
\end{theorem}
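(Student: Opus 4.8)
The plan is to obtain both assertions from \emph{uniform-in-$(\varepsilon,R)$} a priori estimates for the smooth, non-negative solutions $\rho_{\varepsilon,R}$ of the regularized equation \eqref{pdesR1}, and then to pass to the limit $R\to\infty$, $\varepsilon\to 0$ exactly as in the proof of Theorem~\ref{thm1}; since the $L^p$- and $H^2$-norms are weakly lower semicontinuous, the unique weak solution inherits the bounds. The only property of the consensus point used below is the uniform bound $\sup_{t\in[0,T]}|m_\alpha^f(\rho_\varepsilon)|\le C$, which follows from the uniform fourth-moment estimate $\sup_t\int|x|^4\rho_{\varepsilon,R}\,dx\le C$ already established in the proof of Theorem~\ref{thm1}; no time derivative of $m$ is ever needed, since all differentiations below are in $x$. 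Set $b:=(x-m)\phi^R(x-m)$ and $a:=|b|^2+\varepsilon^2$.

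\emph{Part (i).} Test \eqref{pdesR1} with $p\rho_{\varepsilon,R}^{p-1}$ (admissible after a mild spatial cutoff, thanks to the moment bounds). Integrating by parts, the drift yields $\tfrac{p-1}{p}\int\rho_{\varepsilon,R}^p\,\nabla\cdot b$ and the diffusion yields the non-positive term $-\tfrac{4(p-1)}{p^2}\int a\,|\nabla(\rho_{\varepsilon,R}^{p/2})|^2$ plus a term proportional to $\int(\Delta a)\rho_{\varepsilon,R}^p$. The coefficients $\nabla\cdot b$ and $\Delta a$ are bounded uniformly in $R$: on $\mathrm{supp}\,\nabla\phi^R$ one has $|x-m|\le 2R$, which compensates the decay $|\nabla\phi^R|\le C/R$, $|D^2\phi^R|\le C/R^2$ from \eqref{vphi}, while on $\{\phi^R\equiv 1\}$ they reduce to $\nabla\cdot(x-m)=d$ and $\Delta|x-m|^2=2d$; and the only place the unbounded factor $\nabla a$ appears, namely $\int\rho_{\varepsilon,R}^{p-1}\nabla\rho_{\varepsilon,R}\cdot\nabla a$, integrates by parts into $\Delta a$. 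Hence $\tfrac{d}{dt}\|\rho_{\varepsilon,R}(t)\|_{L^p}^p\le Cp\,\|\rho_{\varepsilon,R}(t)\|_{L^p}^p$ with $C$ independent of $p,\varepsilon,R$, so $\|\rho_{\varepsilon,R}(t)\|_{L^p}\le e^{CT}\|\rho_{0\varepsilon}\|_{L^p}\le e^{CT}\|\rho_0\|_{L^p}$. Sending $\varepsilon\to 0$, $R\to\infty$ for each finite $p$ and then $p\to\infty$ (using $\rho_0\in L^1\cap L^\infty$) gives $\rho\in L^\infty(0,T;L^p(\R^d))$ for all $p\in[2,\infty]$.

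\emph{Part (ii).} Since $\|D^2u\|_{L^2(\R^d)}=\|\Delta u\|_{L^2(\R^d)}$, it suffices to bound $\|\rho_{\varepsilon,R}\|_{L^2}$ (from the $p=2$ energy estimate of Part (i), which needs only $\rho_0\in L^2$), $\|\nabla\rho_{\varepsilon,R}\|_{L^2}$ and $\|\Delta\rho_{\varepsilon,R}\|_{L^2}$ uniformly; I would compute $\tfrac{d}{dt}\|\nabla\rho_{\varepsilon,R}\|_{L^2}^2$ and $\tfrac{d}{dt}\|\Delta\rho_{\varepsilon,R}\|_{L^2}^2$. The diffusion produces, up to positive constants, the degenerate non-positive remainders $-\sigma^2\int a\,|\Delta\rho_{\varepsilon,R}|^2$ and $-\sigma^2\int a\,|\nabla\Delta\rho_{\varepsilon,R}|^2$, which we simply discard: the degeneracy of the diffusion, the central obstruction elsewhere in the paper, is harmless here. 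Every other contribution is a commutator carrying at least one derivative of $a$ or $b$, and the observation that closes the estimate is that — up to the $\phi^R$- and $\varepsilon$-corrections — $a=|x-m|^2$, so that $D^2a$ and all higher derivatives of $a$ and $b$ are bounded uniformly in $R$ by \eqref{vphi}, the only unbounded coefficient being the \emph{linear} factor $\nabla a$, which enters the top-order terms solely through $\int(\Delta\rho_{\varepsilon,R})\,\nabla a\cdot\nabla\Delta\rho_{\varepsilon,R}=\tfrac12\int\nabla a\cdot\nabla|\Delta\rho_{\varepsilon,R}|^2=-\tfrac12\int\Delta a\,|\Delta\rho_{\varepsilon,R}|^2$ (analogous integrations by parts dispose of the bare factor $b$ in the drift). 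Thus every surviving term is $\le C\|\rho_{\varepsilon,R}\|_{H^2}^2$ with $C$ independent of $\varepsilon,R$, and Grönwall with $\|\rho_{0\varepsilon}\|_{H^2}\le\|\rho_0\|_{H^2}$ yields a uniform bound on $\|\rho_{\varepsilon,R}\|_{L^\infty(0,T;H^2)}$, whence $\rho\in L^\infty(0,T;H^2(\R^d))$. Finally, reading $\partial_t\rho=\lambda\nabla\cdot((x-m)\rho)+\tfrac{\sigma^2}{2}\Delta(|x-m|^2\rho)$ off \eqref{pde} and using that $|x-m|$ is bounded on each ball $B_K$ gives $\|\partial_t\rho(t)\|_{L^2(B_K)}\le C_K\|\rho(t)\|_{H^2}$, i.e. $\partial_t\rho\in L^\infty(0,T;L^2_{loc}(\R^d))$.

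The main difficulty I anticipate is organizational rather than conceptual: one must expand $\Delta(a\rho)$, $\Delta^2(a\rho)$ and $\Delta\nabla\cdot(b\rho)$ in full and verify, term by term, that after the relevant integrations by parts \emph{no} coefficient blows up as $R\to\infty$ or $\varepsilon\to 0$ — concretely, that the single linearly-growing factor $\nabla a$ (and the bare $b$) is always paired with, and integrated by parts against, a derivative of $\rho$, while every other derivative of $a$ and $b$ is uniformly controlled via \eqref{vphi}. Once this bookkeeping is done, the apparent obstacles (the degeneracy at $x=m$ and the growth as $x\to\infty$) disappear, the former because it enters only through discardable non-positive terms, the latter because it is governed entirely by the elementary identities $\nabla|x-m|^2=2(x-m)$ and $\Delta|x-m|^2=2d$.
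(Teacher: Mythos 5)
Your proposal is correct and rests on exactly the same core computations as the paper: test with $\rho^{p-1}$ (resp.\ apply $D$, $D^2$ and test with $D\rho$, $D^2\rho$), integrate by parts so that the only unbounded coefficients $x-m$ and $\nabla|x-m|^2=2(x-m)$ always land on a derivative of $\rho$ and turn into the constants $\nabla\cdot(x-m)=d$, $\Delta|x-m|^2=2d$, discard the non-positive degenerate remainders $-c\int |x-m|^2|\nabla(\cdot)|^2$, and close with Gr\"onwall; the time-derivative bound is then read off the equation exactly as you describe. The one genuine difference is \emph{where} the computation is performed: the paper carries out all of these estimates directly on the limit equation \eqref{pde}, treating $\rho$ as if it were smooth, whereas you perform them on the regularized problem \eqref{pdesR1} with constants uniform in $(\varepsilon,R)$ and recover the bounds for $\rho$ by weak lower semicontinuity. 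Your route is the more careful one — testing \eqref{pde} with $\rho^{p-1}$ or $D^2\rho$ is not literally licensed by the regularity in Definition \ref{def}, so the paper's derivation is formal at that point — and it buys rigor at the cost of the bookkeeping you already anticipate: one must check that the cutoff corrections $\nabla\cdot b-d$, $\Delta a-2d$, $D^2a-2I$, etc.\ are bounded uniformly in $R$ (they are, by the scaling in \eqref{vphi} and \eqref{hR}, since $|x-m|\le 2R$ on $\mathrm{supp}\,\nabla\phi^R$), and for the $\|\Delta\rho_{\varepsilon,R}\|_{L^2}$ estimate the term $\nabla\Delta a\cdot\nabla\rho$ involves third derivatives of $\phi^R$, so one should take $\phi\in C^3$ rather than the paper's $C^2$ — a harmless strengthening of the cutoff. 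With that minor adjustment your argument is complete and, if anything, tighter than the printed proof.
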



This paper is organized as follows. Section 2 is devoted to establishing uniform estimates for solutions of the regularization problem \eqref{pdesR}. In Section 3, we prove the existence of solutions to the original problem \eqref{pdesR1} through compactness methods, while the uniqueness follows from the well-posedness of the associated stochastic differential equation \eqref{CBO: mfq}. Finally, Section 4 presents the boundedness and $H^2$-regularity results for the obtained solutions.

\section{The uniform estimate of solutions to the regularization problem}
In this section, we show the uniform estimates of solutions to the regularization problem \eqref{pdesR1}. Before proving them, we outline the essential assumptions and foundational lemmas required for the subsequent analysis.

We first state the assumptions imposed on the cost function $f$.
\begin{assump}\label{ass}
Throughout we are interested in objective function $f\in\mathcal{C}(\R^d)$, for which
	\begin{enumerate}
		\item $f:~\RR^d\to \RR$ is bounded from below by $\underline f=\min f$, and there exist $L_{f}>0, s\geq 0$ constants such that
		\begin{equation*}
			|f(x)-f(y)|\leq L_f(1+|x|+|y|)^s|x-y|, \quad \forall x,y\in \RR^d;
		\end{equation*}
		\item There exist constants $c_\ell,c_u,c_0>0$  and $\ell>0$ such that
		\begin{equation*}
			c_\ell(|x|^\ell-c_0)	\leq f-\underline f\leq c_u(|x|^\ell+1),\quad \forall x\in \RR^d.
		\end{equation*}
	\end{enumerate}
\end{assump}

In what follows, we denote by $W^{k,p}(\RR^d)$ the Sobolev space  and by $W_{loc}^{k,p}(\RR^d)$ its local version,
and $|\cdot|$ is the standard Euclidean norm in $\RR^d$; $\mathcal{P}(\RR^d)$ denotes the space of probability measures on $\RR^d$, and $\mathcal{P}_p(\RR^d)$ with $p\geq 1$ contains all $\mu\in \mathcal{P}(\RR^d)$ such that $\mu(|\cdot|^p):=\int_{\RR^d}|x|^pd\mu(x)<\infty$; it is equipped with $p$-Wasserstein distance $W_p(\cdot,\cdot)$. 
Lastly, we define $\mathcal{P}_{p,K}(\mathbb{R}^{d}) := \{\mu \in \mathcal{P}_{p}(\mathbb{R}^{d}):\, \mu(|\cdot|^{p})\leq K\}$. Moreover when $\mu\in \mathcal{P}(\RR^d)$ has a density we abuse the notion $d\mu(x)=\mu(x)dx$.

Now, let us recall some estimates on $m_\alpha^f(\mu)$ from \cite[Proposition 3.1, Proposition A.3]{gerber2023mean}.
\begin{lem}
\label{lem: useful estimates}
Suppose that $f$ satisfies Assumption \ref{ass}. Then for all $K>0$, 
there exists some constant $L_{\mathfrak{m}}>0$ depending only on $L_f,K,\alpha$ such that
\begin{equation}\label{lemeq1}
    |m_\alpha^f(\mu)-m_\alpha^f(\nu)|\leq L_{m} W_2(\mu,\nu)\quad \forall (\mu,\nu)\in \mathcal{P}_{2,K}(\RR^d)\times \mathcal{P}_{2,K}(\RR^d)\,.
\end{equation}
Moreover for all $ p\geq 1$, there exists constant $C_m>0$ depending only on $p,c_\ell,c_u,c_0,\ell,\alpha$ such  that
\begin{equation}\label{ms}
    |m_\alpha^f(\mu)|^p\leq C_m\int_{\RR^d}|x|^pd\mu(x)\quad \forall \mu\in \mathcal{P}_p(\RR^d)\,.
\end{equation}
\end{lem}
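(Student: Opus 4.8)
The plan is to prove the two estimates \eqref{lemeq1} and \eqref{ms} separately, since they require rather different mechanisms, but both ultimately hinge on controlling the normalising denominator $D(\mu):=\int_{\RR^d}e^{-\alpha f}\,d\mu$, where $w(x):=e^{-\alpha f(x)}$. This is the common obstacle, because $w$ is neither bounded below nor radially monotone.

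\smallskip
\noindent\textbf{The moment bound \eqref{ms}.} Writing $d\nu:=w\,d\mu/D(\mu)$, a probability measure, we have $m_\alpha^f(\mu)=\int x\,d\nu$, so Jensen's inequality (valid since $p\ge1$) gives $|m_\alpha^f(\mu)|^p\le\int|x|^p\,d\nu=D(\mu)^{-1}\int|x|^p w\,d\mu$. Thus \eqref{ms} reduces to the correlation inequality $\int|x|^p w\,d\mu\le C_m\big(\int w\,d\mu\big)\big(\int|x|^p\,d\mu\big)$. I would establish this through the covariance identity $\int|x|^pw\,d\mu-\big(\int|x|^pd\mu\big)D(\mu)=\tfrac12\iint(|x|^p-|y|^p)(w(x)-w(y))\,d\mu(x)\,d\mu(y)$ and bounding the positive part of the integrand. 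A positive contribution forces a radius-inversion, namely $|x|\ge|y|$ together with $f(x)<f(y)$; Assumption \ref{ass}(2) then yields $c_\ell(|x|^\ell-c_0)\le f(x)<f(y)\le c_u(|y|^\ell+1)$, so that $|x|^\ell\le\tfrac{c_u}{c_\ell}|y|^\ell+\tfrac{c_u}{c_\ell}+c_0$. Hence the two radii are comparable up to constants depending only on $c_\ell,c_u,c_0,\ell,p$, and estimating the positive part via this comparability gives $\int|x|^pw\,d\mu\le C_m\big(1+\int|x|^pd\mu\big)D(\mu)$; a Markov-type refinement on the inversion set (where $|x|$ is bounded below) absorbs the additive constant and produces the stated multiplicative bound.

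\smallskip
The point I expect to be delicate here is that the \emph{naive} route fails: one is tempted to sandwich $w$ between its radial envelopes $e^{-\alpha\underline f}e^{-\alpha c_u(|x|^\ell+1)}\le w(x)\le e^{-\alpha\underline f}e^{\alpha c_\ell c_0}e^{-\alpha c_\ell|x|^\ell}$ and apply Chebyshev's integral inequality to the monotone profiles. But when $c_u>c_\ell$ the ratio of the two envelopes grows like $e^{\alpha(c_u-c_\ell)|x|^\ell}$, which destroys any $\mu$-independent constant (take $\mu=\delta_{x}$ with $|x|\to\infty$). One must instead keep the single weight $w$ and exploit that the \emph{same} $f$ appears in the numerator and denominator; the covariance-plus-inversion argument above is precisely the device that does so.

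\smallskip
\noindent\textbf{The Lipschitz bound \eqref{lemeq1}.} For $\mu,\nu\in\mathcal{P}_{2,K}(\RR^d)$ I would start from the quotient decomposition $m_\alpha^f(\mu)-m_\alpha^f(\nu)=\frac{1}{D(\mu)}\int xw\,d(\mu-\nu)+m_\alpha^f(\nu)\,\frac{1}{D(\mu)}\int w\,d(\nu-\mu)$, and control each factor by two ingredients. First, the maps $x\mapsto xw(x)$ and $x\mapsto w(x)$ are \emph{globally} Lipschitz on $\RR^d$: although $f$ is only locally Lipschitz with polynomial modulus (Assumption \ref{ass}(1)), the super-exponential decay of $w$ coming from Assumption \ref{ass}(2) dominates any polynomial growth, so the difference quotients are bounded by a constant depending only on $\alpha,L_f,s,c_\ell,c_0,\ell$. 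Kantorovich--Rubinstein duality then gives $\big|\int g\,d(\mu-\nu)\big|\le\mathrm{Lip}(g)\,W_1(\mu,\nu)\le\mathrm{Lip}(g)\,W_2(\mu,\nu)$ for $g\in\{w,\,xw\}$. Second, I need a uniform lower bound $D(\mu)\ge\delta_K>0$ on $\mathcal{P}_{2,K}$: by Markov $\mu(B_R)\ge\tfrac12$ for $R=(2K)^{1/2}$, and on $B_R$ one has $f\le\underline f+c_u(R^\ell+1)$, whence $D(\mu)\ge\tfrac12 e^{-\alpha(\underline f+c_u(R^\ell+1))}=:\delta_K$. Combining these with $|m_\alpha^f(\nu)|\le C$ (from \eqref{ms} together with $\nu\in\mathcal{P}_{2,K}$) bounds each term by $C(\alpha,L_f,s,c_\ell,c_u,c_0,\ell,K)\,W_2(\mu,\nu)$, yielding \eqref{lemeq1}.

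\smallskip
The main obstacle in this second part is exactly the lower bound on $D(\mu)$, and it also explains why the hypothesis is restricted to the bounded-moment set $\mathcal{P}_{2,K}$ rather than all of $\mathcal{P}_2(\RR^d)$: sending mass to infinity makes $D(\mu)$ arbitrarily small, so no Lipschitz constant uniform over all of $\mathcal{P}_2$ can exist, and the moment cap $K$ is precisely what renders $\delta_K$ uniform. The most technical verification is the global Lipschitz continuity of $xw$ and $w$ from only the polynomial-modulus local Lipschitz assumption on $f$, which is where the interplay between Assumption \ref{ass}(1) and the growth bound (2) must be made quantitative.
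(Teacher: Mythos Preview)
The paper does not prove this lemma at all; it merely recalls the two estimates from \cite[Proposition~3.1, Proposition~A.3]{gerber2023mean} and moves on. There is therefore no in-paper argument to compare your proposal against.

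That said, your plan follows the standard route and is essentially sound, with two places that deserve more care.

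For the Lipschitz bound \eqref{lemeq1}, your decomposition and the use of Kantorovich--Rubinstein together with the Markov lower bound on $D(\mu)$ are exactly right. The delicate step is the global Lipschitz continuity of $w$ and $xw$: the mean-value estimate
\[
|w(x)-w(y)|\le\alpha\,e^{-\alpha\min(f(x),f(y))}\,L_f\,(1+|x|+|y|)^{s}\,|x-y|
\]
does \emph{not} give a uniform constant directly (take $x$ near the minimiser of $f$ and $|y|\to\infty$: the exponential factor stays of order one while the polynomial blows up). One has to split into $|x-y|<1$, where $|x|$ and $|y|$ are comparable and Assumption~\ref{ass}(2) lets the decay of $w$ absorb the polynomial, and $|x-y|\ge1$, where the trivial bound $|w(x)-w(y)|\le 2\|w\|_\infty\le 2\|w\|_\infty|x-y|$ already suffices. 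The same dichotomy handles $xw$.

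For the moment bound \eqref{ms}, your covariance identity together with the inversion estimate $|x|^\ell\le\tfrac{c_u}{c_\ell}|y|^\ell+\tfrac{c_u}{c_\ell}+c_0$ cleanly yields an \emph{affine} bound $|m_\alpha^f(\mu)|^p\le C_1\int|x|^p\,d\mu+C_2$. Your description of how to remove the additive $C_2$ is, however, garbled: on the inversion set the growth condition bounds $|x|$ from \emph{above} in terms of $|y|$, not from below, so the parenthetical ``where $|x|$ is bounded below'' is backwards. A clean way to dispose of $C_2$ is to treat the regime $M_p:=\int|x|^p\,d\mu\le1$ separately: Markov then gives $\mu(B_{2^{1/p}}(0))\ge\tfrac12$, and Assumption~\ref{ass}(2) yields a lower bound $D(\mu)\ge\delta>0$ depending only on $\alpha,c_u,\ell,p$. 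Hence $d\nu/d\mu\le e^{-\alpha\underline f}/\delta$ and $\int|x|^p\,d\nu\le C_3 M_p$ directly. Taking $C_m=\max(C_1+C_2,\,C_3)$ covers both regimes.
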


For the purpose of convenience, let us define $$h^R(x):=(x- m_\alpha^f(\rho_{\varepsilon}))\phi^R(x- m_\alpha^f(\rho_{\varepsilon})).$$
Then the equation \eqref{pdesR1} can be rewritten as
\begin{align}\label{pdesR}
\begin{cases}
     & \partial_t\rho_{\varepsilon,R} = \lambda\nabla\cdot\big(h^R(x)\rho_{\varepsilon,R}\big) + \frac{\sigma^2}{2}\Delta\big((|h^R(x)|^2+\varepsilon^2)\rho_{\varepsilon,R}\big),\\
  & \rho_{\varepsilon,R}(0,x)=\rho_{0\varepsilon}(x):=\rho_{0}(x)*J_{\varepsilon}(x),  
\end{cases}
\end{align}
where $h^R = (h^1, h^2,\cdots,h^d)$ has the following properties for all $i,j,k\in[d]$
\begin{align}\label{hR}
\| h^i\|_{L^\infty(\R^d)}\le CR,\qquad\|\partial_{x_j}h^i\|_{L^\infty(\R^d)}\le C \ \mathrm{and}\ \|\partial_{x_j,x_k}^2h^i\|_{L^\infty(\R^d)}\le \frac{C}{R}.
\end{align}
The classical parabolic theory implies that the above regularization problem has a global smooth solution $\rho_{\varepsilon,R}$ for $t > 0$. We prove the non-negativity if the initial data is non-negative. 
\begin{lem}\textup{(Non-negativity)} 
The solution $\rho_{\varepsilon,R}$ to \eqref{pdesR} is non-negative.
\end{lem}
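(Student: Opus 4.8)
The plan is to observe that, for fixed $\varepsilon,R>0$, equation \eqref{pdesR} is a \emph{linear, uniformly parabolic} equation with smooth bounded coefficients, and then to deduce non-negativity either from the weak maximum principle or, self-containedly, from an energy estimate on the negative part. Writing $A(x):=\frac{\sigma^2}{2}\big(|h^R(x)|^2+\varepsilon^2\big)$ and expanding the divergence and the Laplacian, \eqref{pdesR} takes the non-divergence form
\[
\partial_t\rho_{\varepsilon,R}=A\,\Delta\rho_{\varepsilon,R}+\big(\lambda h^R+2\nabla A\big)\cdot\nabla\rho_{\varepsilon,R}+\big(\lambda\,\nabla\!\cdot h^R+\Delta A\big)\rho_{\varepsilon,R}.
\]
By \eqref{hR} the functions $h^R,\nabla h^R,\nabla^2 h^R$ are bounded, hence so are $A,\nabla A,\Delta A$, while $A\ge \tfrac{\sigma^2\varepsilon^2}{2}>0$ gives uniform ellipticity; thus the first- and zeroth-order coefficients are bounded by a constant $C_{\varepsilon,R}$.

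Since $\rho\equiv 0$ solves this linear equation and $\rho_{\varepsilon,R}(0,\cdot)=\rho_{0\varepsilon}\ge 0$, a comparison argument yields $\rho_{\varepsilon,R}\ge 0$ on $(0,T)\times\mathbb{R}^d$. The route I would present is an energy estimate: setting $u:=(\rho_{\varepsilon,R})_-=\max\{-\rho_{\varepsilon,R},0\}\ge 0$ and testing the divergence form $\partial_t\rho_{\varepsilon,R}=\nabla\!\cdot(A\nabla\rho_{\varepsilon,R})+\nabla\!\cdot\big((\lambda h^R+\nabla A)\rho_{\varepsilon,R}\big)$ against $-u$ (more precisely against $-u\chi_n^2$ with a cutoff $\chi_n(\cdot)=\chi(\cdot/n)$, then $n\to\infty$), one gets $\int\partial_t\rho_{\varepsilon,R}\,(-u)=\tfrac12\frac{d}{dt}\int u^2$, a good sign term $-\int A|\nabla u|^2\le 0$ from the diffusion, and a first-order term controlled by Young's inequality through $\tfrac12\int A|\nabla u|^2+C_{\varepsilon,R}\int u^2$. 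This leads to
\[
\frac{d}{dt}\int_{\mathbb{R}^d}u^2\,dx\le C_{\varepsilon,R}\int_{\mathbb{R}^d}u^2\,dx,
\]
and since $u(0,\cdot)=(\rho_{0\varepsilon})_-=0$, Grönwall's inequality forces $u\equiv 0$, i.e. $\rho_{\varepsilon,R}\ge 0$.

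There is no genuine obstacle here; the only technical care concerns the unbounded domain and the non-smoothness of $(\cdot)_-$. Integrations by parts on $\mathbb{R}^d$ are justified by the decay of $\rho_{\varepsilon,R}$ and its gradient provided by the classical parabolic theory, or are bypassed by the localization $\chi_n$ whose error terms carry a harmless factor $n^{-2}$; and the identities involving $(\cdot)_-$ (which is merely Lipschitz) are obtained by first replacing it with a smooth convex approximation $\beta_\delta\!\to\!(\cdot)_-$ and letting $\delta\to 0$. One could alternatively invoke the weak maximum principle for uniformly parabolic operators directly, noting that $\rho_{\varepsilon,R}$ lies in the admissible growth class. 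Either way, the substance is simply that \eqref{pdesR} is a bona fide uniformly parabolic linear equation, for which propagation of non-negativity is classical.
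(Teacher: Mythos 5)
Your proposal is correct and follows essentially the same route as the paper: an $L^2$ energy estimate on the negative part of $\rho_{\varepsilon,R}$, using the good sign of the diffusion term and the boundedness of $\nabla\cdot h^R$ to close via Grönwall's inequality from the zero initial value of the negative part. The extra care you take with the cutoff $\chi_n$ and the smooth approximation of $(\cdot)_-$ is a welcome refinement of technical points the paper passes over silently, but it does not change the substance of the argument.
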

\begin{proof}
Define $\rho_-:=\min{\{0,\rho_{\varepsilon,R}\}}$ and we consider the following space for any $t\in[0,T]$
$$\mathcal{A}_t={\{x:\rho_{\varepsilon,R}(t,x)<0\}}~~\text{and}~~\partial\mathcal{A}_t={\{x:\rho_{\varepsilon,R}(0,x)=0\}}.$$
For the initial $\rho_0\ge 0$, we find $\rho_-(0,x) = 0$. Multiply $\rho_-$ on both sides of the equation \eqref{pdesR} and integrate over $\R^d$. Then, for all $0<t<T$, we have
\begin{align*}
\int_{\R^d}\rho_-\partial_t\rho_{\varepsilon,R} dx 
=& \lambda\int_{\R^d}\rho_- \nabla \cdot( h^R(x)\rho_{\varepsilon,R}) dx +\frac{\sigma^2}{2}\int_{\R^d}\rho_-\Delta\big((|h^R(x)|^2+\varepsilon^2)\rho_{\varepsilon,R}\big)dx\\
=& -\lambda\int_{\mathcal{A}_t}\nabla\rho_-\cdot h^R(x)\rho_- dx -\frac{\sigma^2}{2}\int_{\mathcal{A}_t}\nabla\rho_-\cdot\nabla\big(|h^R(x)|^2\rho_-\big)dx -\frac{\sigma^2\varepsilon^2}{2}\int_{\mathcal{A}_t}|\nabla\rho_-|^2dx\\
=&-\frac{1}{2}(\lambda + \sigma^2)\int_{\mathcal{A}_t}h^R(x)\cdot\nabla(\rho_-^2)dx -\frac{\sigma^2}{2}\int_{\mathcal{A}_t}|\nabla\rho_-|^2|h^R(x)|^2dx-\frac{\sigma^2\varepsilon^2}{2}\int_{\mathcal{A}_t}|\nabla\rho_-|^2dx\\
\le&\frac{d}{2}(\lambda + \sigma^2)\int_{\mathcal{A}_t}\rho_-^2dx.
\end{align*}
By Gr\"onwall's inequality, we have for a fixed $T$ and any $t\in[0,T]$
$$\|\rho_-\|_{L^2(\R^d)} \le \exp{\{\frac{d}{2}(\lambda + \sigma^2)T\}}\|\rho_-(0,x)\|_{L^2(\R^d)} = 0.$$
Thus for any $t\in [0,T]$, we obtain $\rho_{\varepsilon,R}\ge0$ a.e. in $\R^d$.
\end{proof}
Furthermore, the mass is conserved in the following sense
\begin{align}\label{mc}
\int_{\R^d}\rho_{\varepsilon,R}(t,x)dx = \int_{\R^d}\rho(0,x)dx \quad \forall t>0\,.
\end{align}

In the following, we present several preliminary lemmas. We begin by estimating the uniform $\| \cdot \|_{L^{\infty}(0,T;L^2(\mathbb{R}^d))}$ norm of the regularization solution.
\begin{lem}\label{lem2.1}
Assume that $f$ satisfies Assumption \ref{ass}, and let the non-negative initial data $\rho_0\in L^1\cap L^2(\R^d)$ and $\rho_{\varepsilon,R}$ be the solution to the regularization problem \eqref{pdesR}. Then, for any fixed $T>0$, $\rho_{\varepsilon,R}$ satisfies the following uniform estimates
\begin{align}\label{rhoR}
\|\rho_{\varepsilon,R}\|_{L^{\infty}(0,T;L^2(\R^d))}\leq C,\quad 
\sigma^2\int_0^T\int_{\R^d}|h^R(x) |^2|\nabla\rho_{\varepsilon,R}|^2dxdt\leq C,
\end{align}
where $C$ is independent of $R,\varepsilon$.
\end{lem}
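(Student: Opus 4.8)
The plan is to derive the $L^2$-estimate by testing equation \eqref{pdesR} against $\rho_{\varepsilon,R}$ itself, exploiting the smoothness of the regularized solution to justify all integrations by parts. Formally, multiplying by $\rho_{\varepsilon,R}$ and integrating over $\mathbb{R}^d$ gives
\[
\frac{1}{2}\frac{d}{dt}\int_{\mathbb{R}^d}\rho_{\varepsilon,R}^2\,dx
= -\lambda\int_{\mathbb{R}^d} h^R(x)\rho_{\varepsilon,R}\cdot\nabla\rho_{\varepsilon,R}\,dx
-\frac{\sigma^2}{2}\int_{\mathbb{R}^d}\nabla\bigl((|h^R(x)|^2+\varepsilon^2)\rho_{\varepsilon,R}\bigr)\cdot\nabla\rho_{\varepsilon,R}\,dx.
\]
For the drift term I would write $h^R\rho_{\varepsilon,R}\cdot\nabla\rho_{\varepsilon,R}=\tfrac12 h^R\cdot\nabla(\rho_{\varepsilon,R}^2)$ and integrate by parts again, producing $\tfrac{\lambda}{2}\int (\nabla\cdot h^R)\rho_{\varepsilon,R}^2\,dx$, which is bounded by $C\|\rho_{\varepsilon,R}\|_{L^2}^2$ using $\|\partial_{x_j}h^i\|_{L^\infty}\le C$ from \eqref{hR}. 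For the diffusion term I would expand $\nabla\bigl((|h^R|^2+\varepsilon^2)\rho_{\varepsilon,R}\bigr)=(|h^R|^2+\varepsilon^2)\nabla\rho_{\varepsilon,R}+\rho_{\varepsilon,R}\nabla(|h^R|^2)$, which yields the good (nonnegative) dissipation terms $\tfrac{\sigma^2}{2}\int(|h^R|^2+\varepsilon^2)|\nabla\rho_{\varepsilon,R}|^2\,dx$ on the left, plus a cross term $\tfrac{\sigma^2}{2}\int \rho_{\varepsilon,R}\nabla(|h^R|^2)\cdot\nabla\rho_{\varepsilon,R}\,dx$.

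The key point is that this cross term must be absorbed into the good dissipation term rather than merely bounded. Since $\nabla(|h^R|^2)=2\sum_i h^i\nabla h^i$ and each $|\nabla h^i|\le C$ (independently of $R$) while $|h^R|\le CR$ grows, a crude bound would cost a factor of $R$. Instead I would integrate the cross term by parts once more, moving the derivative off $\nabla\rho_{\varepsilon,R}$: writing $\rho_{\varepsilon,R}\nabla(|h^R|^2)\cdot\nabla\rho_{\varepsilon,R}=\tfrac12\nabla(|h^R|^2)\cdot\nabla(\rho_{\varepsilon,R}^2)$, integration by parts gives $-\tfrac12\int\Delta(|h^R|^2)\rho_{\varepsilon,R}^2\,dx$, and $\Delta(|h^R|^2)=2|\nabla h^R|^2+2\sum_i h^i\Delta h^i$. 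The first piece is bounded by $C\|\rho_{\varepsilon,R}\|_{L^2}^2$ directly; for the second piece I use $|h^i|\le CR$ together with $|\partial^2_{x_jx_k}h^i|\le C/R$ from \eqref{hR}, so the product is $O(1)$ uniformly in $R$, again giving a bound $C\|\rho_{\varepsilon,R}\|_{L^2}^2$. This is exactly the cancellation that makes the scaling work.

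Putting these together, I obtain a differential inequality of the form
\[
\frac{d}{dt}\int_{\mathbb{R}^d}\rho_{\varepsilon,R}^2\,dx + \sigma^2\int_{\mathbb{R}^d}(|h^R(x)|^2+\varepsilon^2)|\nabla\rho_{\varepsilon,R}|^2\,dx \le C\int_{\mathbb{R}^d}\rho_{\varepsilon,R}^2\,dx,
\]
with $C$ depending only on $\lambda,\sigma,d$ and the constants in \eqref{hR}, hence independent of $R,\varepsilon$. Gr\"onwall's inequality and $\|\rho_{0\varepsilon}\|_{L^2}\le\|\rho_0\|_{L^2}$ then give the uniform $L^\infty(0,T;L^2)$ bound; integrating the inequality in time and dropping the nonnegative $\varepsilon^2$-term yields the second claimed estimate $\sigma^2\int_0^T\int|h^R|^2|\nabla\rho_{\varepsilon,R}|^2\,dx\,dt\le C$. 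The main obstacle is precisely the handling of the cross term: a naive estimate loses powers of $R$, and one must use the second integration by parts together with the $R$-scaling of the second derivatives of $h^R$ in \eqref{hR} to close the estimate uniformly. A secondary technical point is justifying the integrations by parts and the absence of boundary terms at infinity, which follows from the smoothness and sufficient decay of $\rho_{\varepsilon,R}$ guaranteed by classical parabolic theory (or, if one prefers, by first working on a large ball with a cutoff and then passing to the limit).
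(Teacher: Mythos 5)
Your proposal is correct and follows essentially the same route as the paper: test with $\rho_{\varepsilon,R}$, move the cross term onto $\Delta(|h^R(x)|^2)$ by a second integration by parts, and exploit the cancellation $|h^i|\,|\partial^2_{x_jx_k}h^i|\le CR\cdot C/R=C$ from \eqref{hR} to get an $R$- and $\varepsilon$-independent Gr\"onwall inequality, then integrate in time for the dissipation bound. The only (cosmetic) difference is that you write out the full formula $\Delta(|h^R|^2)=2|\nabla h^R|^2+2\sum_i h^i\Delta h^i$, which is in fact slightly more careful than the paper's display.
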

\begin{proof}
Multiply $\rho_{\varepsilon,R}$ to the both hand in the regularization equation \eqref{pdesR} to get that for fixed $T>0$, $0\leq t\leq T$ it holds that
\begin{align}\label{L12}
&\frac{1}{2}\frac{d}{dt}\|\rho_{\varepsilon,R}\|_{L^2(\R^d)}^2 \nonumber\\
=&-\lambda\int_{\R^d}\nabla\rho_{\varepsilon,R}\cdot h^R(x)\rho_{\varepsilon,R} dx - \frac{\sigma^2}{2}\int_{\R^d}\nabla\rho_{\varepsilon,R}\cdot\nabla\Big(\big(|h^R(x) |^2+\varepsilon^2\big)\rho_{\varepsilon,R}\Big)dx\nonumber\\
=& -\frac{\lambda}{2}\int_{\R^d}\nabla(\rho_{\varepsilon,R})^2\cdot h^R(x)dx - \frac{\sigma^2}{2}\int_{\R^d}\nabla\rho_{\varepsilon,R}\cdot\nabla\big(|h^R(x) |^2\rho_{\varepsilon,R}\big)dx - \frac{\varepsilon^2\sigma^2}{2}\int_{\R^d}|\nabla\rho_{\varepsilon,R}|^2dx \nonumber\\
\le& -\frac{\lambda}{2}\int_{\R^d}\nabla(\rho_{\varepsilon,R})^2\cdot h^R(x)dx 
-\frac{\sigma^2}{4}\int_{\R^d}\nabla(\rho_{\varepsilon,R})^2\cdot \nabla (|h^R(x)|^2)dx 
- \frac{\sigma^2}{2}\int_{\R^d}|\nabla(\rho_{\varepsilon,R})|^2|h^R(x) |^2dx\nonumber\\
=& \frac{\lambda}{2}\int_{\R^d}\rho_{\varepsilon,R}^2 |\nabla\cdot h^R(x)|dx 
+\frac{\sigma^2}{4}\int_{\R^d}\rho_{\varepsilon,R}^2\Delta
(|h^R(x)|^2)dx 
- \frac{\sigma^2}{2}\int_{\R^d}|\nabla\rho_{\varepsilon,R}|^2|h^R(x)|^2dx.
\end{align}
By \eqref{hR}, we have
\begin{align*}
&|\nabla\cdot h^R|\le\sum\limits_{j=1}^d|\partial_{x_j} h^j|\le C,\\
&|\Delta(|h^R|^2)|\le2\sum\limits_{j=1}^d|\partial_{x_j} h^j|^2 + 2 \sum\limits_{j=1}^d|h^j\partial_{x_j,x_j}^2 h^j|\le C,
\end{align*}
where $C$ is a constant independent of $R,\varepsilon$. So \eqref{L12} becomes 
\begin{align}\label{L2}
\frac{d}{dt}\|\rho_{\varepsilon,R}\|_{L^2(\R^d)}^2
\le&- \sigma^2\int_{\R^d}|h^R(x) |^2|\nabla\rho_{\varepsilon,R}|^2dx+C(\lambda+\sigma^2)\|\rho_{\varepsilon,R}\|_{L^2(\R^d)}^2. 
 \end{align}
Hence Grönwall's inequality implies $\rho_{\varepsilon,R}$ satisfies for any fixed $T>0$ and all $t\in[0,T]$,
\begin{align*}
\|\rho_{\varepsilon,R}\|_{L^2(\R^d)}\leq\exp {\{\frac{C}{2}(\lambda+\sigma^2)T\}}\|\rho_{0}\|_{L^2(\R^d)}.
\end{align*}
Thus, we obtain
$$\|\rho_{\varepsilon,R}\|_{L^{\infty}(0,T;L^2(\R^d))}\leq C,$$
where $C$ is independent of $R, \varepsilon$. Integrating on both sides of \eqref{L2} over the time $[0,T]$, we obtain
\begin{align}\label{L2T2}
\sigma^2\int_0^T\int_{\R^d}|h^R(x) |^2|\nabla\rho_{\varepsilon,R}|^2dxdt &\le C(\lambda,\sigma)\int_0^T\|\rho_{\varepsilon,R}\|_{L^2(\R^d)}^2dt - \big(\|\rho_{\varepsilon,R}\|_{L^2(\R^d)}^2 -\|\rho_0\|_{L^2(\R^d)}^2\big)\nonumber\\
&\le  C(\lambda,\sigma)\int_0^T\|\rho_{\varepsilon,R}\|_{L^2(\R^d)}^2dt + \|\rho_0\|_{L^2(\R^d)}^2\nonumber\\
&\le C(T,\lambda,\sigma,d).
\end{align}
Hence \eqref{rhoR} holds true. The demonstration of this lemma is hereby concluded.
\end{proof}

Next, we establish the boundedness of the fourth moment, a crucial prerequisite for proving Lemma~\ref{lem3.6}.
\begin{lem}\label{lem3.2}\textup{(Boundedness of moments)}
Assume that $f$ satisfies Assumption \ref{ass}. Let the non-negative 
 initial data $\rho_0\in L^1\cap L^2(\R^d)\cap \mathcal{P}_4(\RR^d)$. Then, there is a constant $M_4>0$ independent of $R,\varepsilon$ such that
\begin{align}\label{4m}
\sup_{t\in[0,T]}\int_{\R^d}|x|^4\rho_{\varepsilon,R} dx\le M_4.    
\end{align}
\end{lem}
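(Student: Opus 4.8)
The plan is to test the regularized equation \eqref{pdesR} against the weight $|x|^4$ and close a Gr\"onwall inequality for the fourth moment $E(t):=\int_{\R^d}|x|^4\rho_{\varepsilon,R}(t,x)\,dx$, carefully tracking that every constant depends only on $T,\lambda,\sigma,d$, the mass $\|\rho_0\|_{L^1(\R^d)}$, the constant $C_m$ from \eqref{ms}, and $\int_{\R^d}|x|^4\rho_0\,dx$ — in particular not on $R$ or $\varepsilon$. Since $\rho_{\varepsilon,R}$ is a smooth solution of a uniformly parabolic equation (thanks to the $\varepsilon^2$ term) with bounded coefficients for fixed $R$, it has fast spatial decay and the manipulations below are legitimate; to be fully rigorous one multiplies by $|x|^4\chi(x/M)$ with a smooth cutoff $\chi$ and lets $M\to\infty$, the cutoff errors vanishing because they carry a factor $\nabla\chi(x/M)$ supported on $\{|x|\sim M\}$.

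Using $\nabla|x|^4=4|x|^2x$ and $\Delta|x|^4=4(d+2)|x|^2$ and integrating by parts, one obtains
\begin{align*}
\frac{d}{dt}E(t)=-4\lambda\int_{\R^d}|x|^2\,x\cdot h^R(x)\,\rho_{\varepsilon,R}\,dx+2\sigma^2(d+2)\int_{\R^d}|x|^2\big(|h^R(x)|^2+\varepsilon^2\big)\rho_{\varepsilon,R}\,dx.
\end{align*}
Because $0\le\phi^R\le1$, the truncation only helps: $|h^R(x)|\le|x-m_\alpha^f(\rho_\varepsilon)|\le|x|+M_\varepsilon$, where $M_\varepsilon:=|m_\alpha^f(\rho_\varepsilon)|$. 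Expanding, the right-hand side is controlled by a sum of integrals $\int|x|^k\rho_{\varepsilon,R}\,dx$ with $k\le 4$ multiplied by powers $M_\varepsilon^{\,j}$ with $j\le 2$, plus the harmless contribution $2\sigma^2(d+2)\varepsilon^2\int|x|^2\rho_{\varepsilon,R}$ (which for $\varepsilon\le1$ is $\le C\int|x|^2\rho_{\varepsilon,R}$). By H\"older and mass conservation \eqref{mc}, each such moment satisfies $\int|x|^k\rho_{\varepsilon,R}\,dx\le E(t)^{k/4}\|\rho_0\|_{L^1}^{1-k/4}\le E(t)+\|\rho_0\|_{L^1}$; in particular $\int|x|^2\rho_{\varepsilon,R}\le E^{1/2}\|\rho_0\|_{L^1}^{1/2}$ and $\int|x|^3\rho_{\varepsilon,R}\le E^{3/4}\|\rho_0\|_{L^1}^{1/4}$.

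The one point requiring care is that the nonlocal factor $M_\varepsilon$ enters to order two, so a crude bound produces a term $\sim M_\varepsilon^2\int|x|^2\rho_{\varepsilon,R}$, which is \emph{a priori} quadratic in a moment and would only yield a Riccati-type inequality with possible finite-time blow-up. The resolution is \eqref{ms} with $p=2$: $M_\varepsilon^2=|m_\alpha^f(\rho_\varepsilon)|^2\le C_m\int|x|^2\,d\rho_\varepsilon$, and the second moment of the measure defining the consensus point is uniformly bounded — either because such a bound is already available for $\rho_\varepsilon$, or, in the self-consistent reading, because it equals $\int|x|^2\rho_{\varepsilon,R}\le E^{1/2}\|\rho_0\|_{L^1}^{1/2}$. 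In either case $M_\varepsilon^2\int|x|^2\rho_{\varepsilon,R}\le C_m\big(\int|x|^2\rho_{\varepsilon,R}\big)^2\le C_m\|\rho_0\|_{L^1}\,E$ and, likewise, $M_\varepsilon\int|x|^3\rho_{\varepsilon,R}\le C_m^{1/2}\|\rho_0\|_{L^1}^{1/2}\,E$ — the powers match the fourth moment exactly and linearity is restored.

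Collecting these estimates gives $\tfrac{d}{dt}E(t)\le C\big(E(t)+1\big)$ with $C$ independent of $R,\varepsilon$. Since the initial-data estimate recalled earlier in the excerpt gives $E(0)=\int_{\R^d}|x|^4\rho_{0\varepsilon}\,dx\le C\big(\int_{\R^d}|x|^4\rho_0\,dx+\varepsilon^4\big)$, which is finite because $\rho_0\in\mathcal{P}_4(\R^d)$ and bounded uniformly for $\varepsilon\le1$, Gr\"onwall's inequality yields $\sup_{t\in[0,T]}E(t)\le M_4$ with $M_4$ independent of $R,\varepsilon$, as claimed. The main obstacle is exactly the near-criticality flagged in the previous paragraph: passing from a Riccati-looking differential inequality to a linear one by exploiting the moment bound \eqref{ms} for the nonlocal consensus point; everything else reduces to routine integration by parts together with Young's and H\"older's inequalities.
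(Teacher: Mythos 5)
Your proof is correct and follows essentially the same route as the paper: test the regularized equation against $|x|^4$, integrate by parts, and use \eqref{ms} with $p=2$ to turn the a priori quadratic dependence on the consensus point into a linear one before closing with Gr\"onwall. The only (immaterial) organizational difference is that the paper first establishes a uniform second-moment bound $M_2$ and inserts it as a constant into the fourth-moment inequality, whereas you interpolate $\int_{\R^d}|x|^2\rho_{\varepsilon,R}\,dx\le E(t)^{1/2}\|\rho_0\|_{L^1}^{1/2}$ to close the Gr\"onwall inequality for $E(t)$ in one step.
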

\begin{proof}
First, we prove the boundedness of the second moment. Multiplying \eqref{pdesR} by $|x|^2$, integrating the resulting equation and using the integration by parts, we have that for any $t\in[0,T]$
\begin{align}\label{2m2}
\frac{d}{dt}\int_{\R^d}|x|^2\rho_{\varepsilon,R} dx 
=& \lambda\int_{\R^d}|x|^2\nabla\cdot\big(h^R(x)\rho_{\varepsilon,R}\big)dx + \frac{\sigma^2}{2}\int_{\R^d}|x|^2\Delta\Big(\big(|h^R(x)|^2+\varepsilon^2\big)\rho_{\varepsilon,R}\Big)dx\nonumber\\
=& -2\lambda\int_{\R^d}x\cdot h^R(x)\rho_{\varepsilon,R} dx + \sigma^2d\int_{\R^d}\big(|h^R(x)|^2 +\varepsilon^2\big)\rho_{\varepsilon,R} dx.
\end{align}
Notice $h^R=(x-m_\alpha^f(\rho_{\varepsilon,R}))\phi^R(x-m_\alpha^f(\rho_{\varepsilon,R}))$, $|\phi^R\big(x-m_\alpha^f(\rho_{\varepsilon,R})\big)|\le 1$, and utilize the conservation of mass and \eqref{ms} to get  
\begin{align}\label{xhrho}
\Big|\int_{\R^d}x\cdot h^R(x)\rho_{\varepsilon,R} dx\Big|
\le& \int_{\R^d}|x|^2\rho_{\varepsilon,R} dx
+ \int_{\R^d}|x|\big|m_\alpha^f(\rho_{\varepsilon,R})\big|\rho_{\varepsilon,R} dx\nonumber\\
\le& \int_{\R^d}|x|^2\rho_{\varepsilon,R} dx
+ \int_{\R^d} |x|^2 \rho_{\varepsilon,R} dx\Big(\int_{\R^d} \rho_{\varepsilon,R} dx\Big)^\frac{1}{2}\nonumber\\
\le& C\int_{\R^d}\big|x|^2\rho_{\varepsilon,R} dx
\end{align}
and
\begin{align}\label{h2rho}
\int_{\R^d}\big|h^R(x)\big|^2\rho_{\varepsilon,R} dx
\le&  2\left(\int_{\R^d}\big|x|^2\rho_{\varepsilon,R} dx + \int_{\R^d}\big|m_\alpha^f(\rho_{\varepsilon,R})\big|^2\rho_{\varepsilon,R} dx\right)\nonumber\\
\le&  2\int_{\R^d}\big|x|^2\rho_{\varepsilon,R} dx + 2\int_{\R^d}\big|x|^2\rho_{\varepsilon,R} dx\int_{\R^d}\rho_{\varepsilon,R} dx\nonumber \\
\le& C\int_{\R^d}\big|x|^2\rho_{\varepsilon,R} dx,
\end{align}
where $C$ is a constant independent of $R,\varepsilon$. Substituding \eqref{xhrho} and \eqref{h2rho} into \eqref{2m2}, we attain
\begin{align}\label{2m}
\frac{d}{dt}\int_{\R^d}|x|^2\rho_{\varepsilon,R} dx 
\le& C\int_{\R^d}|x|^2\rho_{\varepsilon,R} dx +\varepsilon^2.
\end{align}
Hence Grönwall's inequality indicates that there exists $M_2>0$ independent of $R, \varepsilon$ such that for any $t\in[0,T]$
\begin{align}\label{m2}
&\int_{\R^d}|x|^2\rho_{\varepsilon,R} dx \le  M_2.
\end{align} 
Furthermore, there exists $M_1>0$ independent of $R,\varepsilon$ such that
\begin{align}\label{m1}
\Big|\int_{\R^d}x\rho_{\varepsilon,R}&dx\Big| =\Big(\int_{\R^d}\rho_{\varepsilon,R} dx\Big)^\frac{1}{2}\Big(\int_{\R^d}|x|^2\rho_{\varepsilon,R} dx\Big)^\frac{1}{2}\le M_1.    
\end{align}

Then, we confirm the boundedness of the fourth moment. Multiplying equation \eqref{pdesR} by $|x|^4$ and integrating the resulting expression, we apply integration by parts to obtain the following estimate for arbitrary any $t\in[0,T]$
\begin{align}\label{4m1}
\frac{d}{dt}\int_{\R^d}|x|^4\rho_{\varepsilon,R} dx 
=& \lambda\int_{\R^d}|x|^4\nabla\cdot\big(h^R(x)\rho_{\varepsilon,R}\big)dx + \frac{\sigma^2}{2}\int_{\R^d}|x|^4\Delta\Big(\big(|h^R(x)|^2+\varepsilon^2\big)\rho_{\varepsilon,R}\Big)dx\nonumber\\
=& -4\lambda\int_{\R^d}|x|^2x\cdot h^R(x)\rho_{\varepsilon,R} dx + 2(2+d)\sigma^2\int_{\R^d}|x|^2\big(|h^R(x)|^2+\varepsilon^2\big)\rho_{\varepsilon,R} dx.
\end{align}
Again utilize $h^R=(x-m_\alpha^f(\rho_{\varepsilon,R}))\phi^R(x-m_\alpha^f(\rho_{\varepsilon,R}))$, $|\phi^R\big(x-m_\alpha^f(\rho_{\varepsilon,R})\big)|\le 1$, the conservation of mass and \eqref{ms}, we have 
\begin{align}\label{x3hR}
\Big|\int_{\R^d}|x|^2x\cdot h^R(x)\rho_{\varepsilon,R} dx\Big| 
\le& \int_{\R^d}|x|^4\rho_{\varepsilon,R} dx
+\int_{\R^d}|x|^3\big|m_\alpha^f(\rho_{\varepsilon,R})\big|\rho_{\varepsilon,R} dx\nonumber\\
\le& \int_{\R^d}|x|^4\rho_{\varepsilon,R} dx + \Big(\int_{\R^d}|x|^4\rho_{\varepsilon,R} dx\Big)^\frac{1}{2}\Big(\int_{\R^d} |x|^2\big|m_\alpha^f(\rho_{\varepsilon,R})\big|^2\rho_{\varepsilon,R} dx\Big)^\frac{1}{2}\nonumber\\
\le& 3\int_{\R^d}|x|^4\rho_{\varepsilon,R} dx + 2\Big(\int_{\R^d} |x|^2\rho_{\varepsilon,R} dx\Big)^2
\end{align}
and
\begin{align}\label{x2hR2}
\int_{\R^d}|x|^2|h^R(x)|^2\rho_{\varepsilon,R} dx
\le&2\Big(\int_{\R^d}|x|^4\rho_{\varepsilon,R} dx + \int_{\R^d}|x|^2|m_\alpha^f(\rho_{\varepsilon,R})|^2\rho_{\varepsilon,R} dx\Big)\nonumber\\
\le&2\int_{\R^d}|x|^4\rho_{\varepsilon,R} dx + 2\Big(\int_{\R^d}|x|^2\rho_{\varepsilon,R} dx\Big)^2.
\end{align}
Substituding \eqref{x3hR} and \eqref{x2hR2} into \eqref{4m1} and employing \eqref{m2}, we obtain
\begin{align}\label{4m2}
\frac{d}{dt}\int_{\R^d}|x|^4\rho_{\varepsilon,R} dx 
\le& C\int_{\R^d}|x|^4\rho_{\varepsilon,R} dx + \varepsilon^4.
\end{align}
By  Grönwall's inequality we have, for any $t\in[0,T]$, there exists $M_4>0$ independent of $R, \varepsilon$ such that 
\begin{align}\label{m4}
&\int_{\R^d}|x|^4\rho_{\varepsilon,R} dx \le M_4.
\end{align} 
Thus  \eqref{4m} holds.
\end{proof}

The following estimate on the spatial derivative of $\rho_{\varepsilon,R}$ is required in order to apply Aubin's lemma within the framework of the compactness argument. The proof is inspired by the argument presented in \cite[Lemma A.3]{trillos2024cb}.
\begin{lem}\label{lem3.3}
Suppose that $f$ meets Assumption \ref{ass}, and let the non-negative 
 initial data $\rho_0\in L^1\cap L^2(\R^d)\cap \mathcal{P}_4(\RR^d)$. 
$\rho_{\varepsilon,R}$ be the solution to the regularization problem \eqref{pdesR}. Then, for any fixed $T>0$, $r>0$, $\rho_{\varepsilon,R}$ satisfies the following spatial derivative estimate
\begin{align}\label{nrho} \|\nabla\rho_{\varepsilon,R}\|_{L^2(0,T;L^{\gamma}(B_r(0)))}\leq C,
\end{align}
and it holds
\begin{align}\label{rhoR2}
\|\rho_{\varepsilon,R}\|_{L^2(0,T;W^{1, \gamma}(B_r(0)))}\leq C,
\end{align}
where $1<\gamma<\frac{2d}{2+d}$  and $C$ is independent of $R,\varepsilon$.
\end{lem}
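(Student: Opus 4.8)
The plan is to derive both bounds from the weighted gradient estimate $\sigma^2\int_0^T\int_{\R^d}|h^R(x)|^2|\nabla\rho_{\varepsilon,R}|^2\,dxdt\le C$ of Lemma~\ref{lem2.1}, combined with the moment bound of Lemma~\ref{lem3.2}, by an application of H\"older's inequality in the space variable. The key point is that on a fixed ball $B_r(0)$ the weight $|h^R(x)|^2$ coincides with $|x-m_\alpha^f(\rho_{\varepsilon,R})|^2$ (once $R$ is large enough) and degenerates only at the single point $x=m_\alpha^f(\rho_{\varepsilon,R})$; trading the weighted $L^2$ information for an unweighted $L^\gamma$ bound therefore costs exactly the local integrability of a negative power of $|x-m_\alpha^f(\rho_{\varepsilon,R})|$, and this is precisely what the range $1<\gamma<\tfrac{2d}{2+d}$ encodes.

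First I would record that the consensus point is uniformly bounded: combining the second-moment bound \eqref{m2} with estimate \eqref{ms} of Lemma~\ref{lem: useful estimates} gives $|m_\alpha^f(\rho_{\varepsilon,R})|\le C_0$ with $C_0$ independent of $R,\varepsilon$. Hence, for $R\ge r+C_0$ we have $\phi^R(x-m_\alpha^f(\rho_{\varepsilon,R}))=1$ for every $x\in B_r(0)$, so that $h^R(x)=x-m_\alpha^f(\rho_{\varepsilon,R})$ there; since the compactness argument of Section~3 eventually sends $R\to\infty$, there is no loss in restricting to such $R$. Setting $p:=\tfrac{2\gamma}{2-\gamma}$, the assumption $\gamma<\tfrac{2d}{2+d}$ is equivalent to $p<d$, so by translation invariance
\[
\int_{B_r(0)}|h^R(x)|^{-p}\,dx=\int_{B_r(0)}|x-m_\alpha^f(\rho_{\varepsilon,R})|^{-p}\,dx\le\int_{B_{r+C_0}(0)}|y|^{-p}\,dy=:C_1<\infty,
\]
with $C_1$ depending only on $r,C_0,\gamma,d$.

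The heart of the argument is then a H\"older estimate, pointwise in time, with exponents $\tfrac{2}{\gamma}$ and $\tfrac{2}{2-\gamma}$:
\[
\int_{B_r(0)}|\nabla\rho_{\varepsilon,R}(t)|^{\gamma}\,dx
=\int_{B_r(0)}\big(|h^R|^{2}|\nabla\rho_{\varepsilon,R}(t)|^{2}\big)^{\gamma/2}\,|h^R|^{-\gamma}\,dx
\le\Big(\int_{B_r(0)}|h^R|^{2}|\nabla\rho_{\varepsilon,R}(t)|^{2}\,dx\Big)^{\gamma/2}C_1^{(2-\gamma)/2}.
\]
Raising this to the power $2/\gamma$ and integrating over $[0,T]$ bounds $\|\nabla\rho_{\varepsilon,R}\|_{L^2(0,T;L^\gamma(B_r(0)))}^2$ by $C_1^{(2-\gamma)/\gamma}\int_0^T\!\int_{B_r(0)}|h^R|^{2}|\nabla\rho_{\varepsilon,R}|^{2}\,dxdt$, which is controlled by \eqref{rhoR}; this yields \eqref{nrho} with a constant independent of $R,\varepsilon$. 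For \eqref{rhoR2} it remains only to add the zeroth-order term: since $\gamma<2$ and $B_r(0)$ is bounded, H\"older's inequality gives $\|\rho_{\varepsilon,R}(t)\|_{L^\gamma(B_r(0))}\le|B_r(0)|^{\frac1\gamma-\frac12}\|\rho_{\varepsilon,R}(t)\|_{L^2(\R^d)}$, and then \eqref{rhoR} controls $\|\rho_{\varepsilon,R}\|_{L^2(0,T;L^\gamma(B_r(0)))}$; summing the two estimates gives \eqref{rhoR2}.

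I expect the only genuine obstacle to be closing the H\"older exponents: the weighted estimate of Lemma~\ref{lem2.1} supplies $|h^R|\,\nabla\rho_{\varepsilon,R}\in L^2$, so extracting an unweighted gradient bound forces us to integrate the singular weight $|x-m_\alpha^f(\rho_{\varepsilon,R})|^{-2\gamma/(2-\gamma)}$ against Lebesgue measure near the degeneracy point, which is finite exactly when $\tfrac{2\gamma}{2-\gamma}<d$. This is the sole reason for the stated range of $\gamma$, and it also explains why the estimate is only local in $x$: for $R$ large the weight equals $|x-m_\alpha^f(\rho_{\varepsilon,R})|^{-p}$, which is integrable near $m_\alpha^f(\rho_{\varepsilon,R})$ but not at infinity. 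All remaining steps are routine uses of H\"older's inequality together with the uniform bounds already established in Lemmas~\ref{lem2.1} and \ref{lem3.2}.
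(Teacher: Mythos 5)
Your proposal is correct and follows essentially the same route as the paper: the same H\"older splitting with exponents $2/\gamma$ and $2/(2-\gamma)$, the same observation that $\gamma\widehat q=\tfrac{2\gamma}{2-\gamma}<d$ makes the singular weight locally integrable, and the same reduction of the weighted gradient integral to the bound \eqref{L2T2} of Lemma~\ref{lem2.1} (the paper absorbs the $\phi^R\equiv 1$ issue by noting $B_r(0)\subset B_R(m_\alpha^f(\rho_{\varepsilon,R}))$ for $R$ large, exactly as you do). Your explicit treatment of the zeroth-order term in \eqref{rhoR2} is a small, correct addition the paper leaves implicit.
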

\begin{proof}
First, define $r$ satisfy $B_r(0)\subset B_R(m_\alpha^f(\rho_{\varepsilon,R}))$ for sufficiently large $R$. For any $t\in[0,T]$, utilize $|m_\alpha^f(\rho_{\varepsilon,R})| \le \Big|\int_{\R^d}x\rho_{\varepsilon,R}dx\Big|\le M_1$ and the Cauchy-Schwarz inequality to get
\begin{align}\label{rhor}
\int_{B_r(0)}|\nabla\rho_{\varepsilon,R}|^\gamma dx
&= \int_{B_r(0)}|\nabla\rho_{\varepsilon,R}|^\gamma \frac{|x- m_\alpha^f(\rho_{\varepsilon,R})|^\gamma}{|x- m_\alpha^f(\rho_{\varepsilon,R})|^\gamma} dx\nonumber\\
&\le \Big(\int_{B_r(0)} \frac{1}{|x- m_\alpha^f(\rho_{\varepsilon,R})|^{\gamma \widehat{q}}}dx\Big)^\frac{1}{\widehat{q}}
\Big(\int_{B_r(0)}|\nabla\rho_{\varepsilon,R}|^{\gamma \widehat{p}} |x- m_\alpha^f(\rho_{\varepsilon,R})|^{\gamma \widehat{p} }dx\Big)^\frac{1}{\widehat{p}},
\end{align}
where $1/\widehat{p} + 1/\widehat{q} = 1$. Taking $\widehat{p}=\frac{2}{\gamma}$ and $\widehat{q}=\frac{2}{2-\gamma}$, and using $1<\gamma<\frac{2d}{2+d}$, we have $\gamma \widehat{q}<d$. Hence it holds that for any $t\in[0,T]$
\begin{align}\label{1/x-m}
\int_{B_r(0)} \frac{1}{|x- m_\alpha^f(\rho_{\varepsilon,R})|^{\gamma \widehat{q}}}dx \le \int_{B_{r+M_1}(0)} \frac{1}{|x|^{\gamma \widehat{q}}}dx=S_{d}\int_0^{r+M_1}s^{d-1-\gamma \widehat{q}}ds= S_{d}\frac{(r+M_1)^{d-\gamma \widehat{q}}}{d-\gamma \widehat{q}},
\end{align}
where $S_{d}$ is the superficial area of the $d$-dimensional unit ball. Noticing that $B_r(0)\subset B_R(m_\alpha^f(\rho_{\varepsilon,R}))$, we have
\begin{align}\label{rhoxm}
\int_{B_r(0)}|\nabla\rho_{\varepsilon,R}|^2 |x- m_\alpha^f(\rho_{\varepsilon,R})|^2dx &\le\int_{B_R(m_\alpha^f(\rho_{\varepsilon,R}))}|\nabla\rho_{\varepsilon,R}|^2 |h^R(x)|^2dx\nonumber\\
&\le \int_{\R^d}|\nabla\rho_{\varepsilon,R}|^2 |h^R(x)|^2dx. 
\end{align}

Thus, substituding \eqref{1/x-m} and \eqref{rhoxm} into \eqref{rhor}, we obtain for any $t\in[0,T]$
\begin{align*}
\int_{B_r(0)}|\nabla\rho_{\varepsilon,R}|^\gamma dx \le C(r)\Big(\int_{\R^d}|\nabla\rho_{\varepsilon,R}|^2 |h^R(x)|^2dx\Big)^\frac{\gamma}{2},
\end{align*}
a.e. 
\begin{align}\label{nrhosr}
\|\nabla\rho_{\varepsilon,R}\|_{L^\gamma(B_r(0))}^2 \le C(r)\int_{\R^d}|\nabla\rho_{\varepsilon,R}|^2 |h^R(x)|^2dx.
\end{align}
Integrating on both sides of \eqref{nrhosr} over the time $[0,T]$ and using a result \eqref{L2T2} of Lemma \ref{lem3.1}, we obtain
\begin{align*}
&\int_0^T\|\nabla\rho_{\varepsilon,R}\|_{L^\gamma(B_r(0))}^2dt \le C\|\rho_0\|_{L^2(\R^d)}^2.
\end{align*}
This completes the proof of this Lemma.
\end{proof}

The following time derivative estimate for $\rho_{\varepsilon,R}$ will be essential.
\begin{lem}\label{lem3.4}
Assume that $f$ satisfies Assumption \ref{ass}, and let the non-negative 
 initial data $\rho_0\in L^1\cap L^2(\R^d)\cap \mathcal{P}_4(\RR^d)$ and $\rho_{\varepsilon,R}$ be the solution to the regularization problem \eqref{pdesR}. Then the time derivative of $\rho_{\varepsilon,R}$ satisfies the following uniform estimate
\begin{align}\label{trho}
\|\partial_t\rho_{\varepsilon,R}\|_{L^\infty(0,T;~W^{-2,\frac{4}{3}}_{loc}(\R^d))}\le C,
\end{align}
where $C$ is constant independent of $R, \varepsilon$.
\end{lem}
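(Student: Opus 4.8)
The plan is to bound $\partial_t\rho_{\varepsilon,R}$ in $W^{-2,4/3}_{loc}$ by pairing the equation against test functions and moving all derivatives off $\rho_{\varepsilon,R}$ onto the test function, so that only $L^\infty$-in-time quantities already controlled by Lemmas~\ref{lem2.1} and~\ref{lem3.2} appear. Fix a ball $B_r(0)$ and take $\varphi\in W^{2,4}_0(B_r(0))$ with $\|\varphi\|_{W^{2,4}(B_r(0))}\le 1$. Testing \eqref{pdesR} against $\varphi$ and integrating by parts twice gives
\begin{align*}
\langle\partial_t\rho_{\varepsilon,R},\varphi\rangle
= -\lambda\int_{B_r(0)}h^R(x)\rho_{\varepsilon,R}\cdot\nabla\varphi\,dx
+\frac{\sigma^2}{2}\int_{B_r(0)}\big(|h^R(x)|^2+\varepsilon^2\big)\rho_{\varepsilon,R}\,\Delta\varphi\,dx.
\end{align*}
The point is that no derivative of $\rho_{\varepsilon,R}$ remains, so only $\|\rho_{\varepsilon,R}\|_{L^2}$ (hence $L^{4/3}$ on the bounded ball) and weighted moments of $\rho_{\varepsilon,R}$ enter.

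Next I would estimate the two terms. For the drift term, on $B_r(0)$ one has $|h^R(x)|\le |x-m_\alpha^f(\rho_{\varepsilon,R})|\le |x|+M_1\le r+M_1$ using \eqref{m1}, so $|h^R\rho_{\varepsilon,R}|\le (r+M_1)\rho_{\varepsilon,R}$; by Hölder with exponents $4/3$ and $4$, this term is bounded by $C(r)\|\rho_{\varepsilon,R}\|_{L^{4/3}(B_r(0))}\|\nabla\varphi\|_{L^4(B_r(0))}$, and $\|\rho_{\varepsilon,R}\|_{L^{4/3}(B_r(0))}\le |B_r|^{1/4}\|\rho_{\varepsilon,R}\|_{L^2(B_r(0))}\le C(r)$ by Lemma~\ref{lem2.1}. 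For the diffusion term, $|h^R(x)|^2+\varepsilon^2\le (r+M_1)^2+1$ on $B_r(0)$ (using $\varepsilon\le 1$ WLOG, or simply keeping $\varepsilon^2$ as a harmless additive constant), so the same Hölder estimate gives a bound $C(r)\|\rho_{\varepsilon,R}\|_{L^{4/3}(B_r(0))}\|\Delta\varphi\|_{L^4(B_r(0))}\le C(r)$. Taking the supremum over admissible $\varphi$ and then over $t\in[0,T]$ yields $\|\partial_t\rho_{\varepsilon,R}(t)\|_{W^{-2,4/3}(B_r(0))}\le C(r)$ uniformly in $R,\varepsilon$, which is \eqref{trho}.

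I do not anticipate a serious obstacle here: the estimate is essentially a duality computation, and all the needed uniform bounds on $\rho_{\varepsilon,R}$ (the $L^\infty_tL^2_x$ bound and the fourth-moment bound, though in fact only the $L^2$ bound on a fixed ball is used) are already in hand from the preceding lemmas. The only mild care needed is in the choice of the duality pairing: one must verify that $W^{2,4}_0(B_r(0))\hookrightarrow W^{1,4}_0(B_r(0))$ so that both $\nabla\varphi$ and $\Delta\varphi$ are controlled by $\|\varphi\|_{W^{2,4}}$, and that the localized integration by parts produces no boundary terms — both immediate since $\varphi$ has compact support in $B_r(0)$. One should also note that the constant $C(r)$ depends on $r$ (through $|B_r|$ and through $r+M_1$) but not on $R$ or $\varepsilon$, which is exactly what "$W^{-2,4/3}_{loc}$" and "uniform in $R,\varepsilon$" require; the coefficient bounds used are local and elementary, so the more delicate global bounds \eqref{hR} on derivatives of $h^R$ are not even needed for this lemma.
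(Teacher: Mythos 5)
Your proposal is correct and follows essentially the same route as the paper: pair the equation with a $W^{2,4}$ test function of compact support, integrate by parts so that no derivative falls on $\rho_{\varepsilon,R}$, and control the resulting integrals by the uniform $L^\infty(0,T;L^2)$ bound of Lemma~\ref{lem2.1} together with the uniform bound on $m_\alpha^f(\rho_{\varepsilon,R})$ supplied by the moment estimates of Lemma~\ref{lem3.2}. The only immaterial difference is that you bound $|h^R(x)|\le r+M_1$ pointwise on the support, whereas the paper keeps the factor $|x|$ and absorbs it by H\"older against the fourth moment (producing $L^{8/3}$ and $L^4$ norms of the test function); both give the same $W^{-2,\frac43}_{loc}$ estimate uniform in $R$ and $\varepsilon$.
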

\begin{proof}
For any test function $\psi(x)\in C_c^\infty(\R^d)$ and $\Omega:=\mathrm{ssup}\psi$, we deduce that for any $t\in[0,T]$,
\begin{align}\label{ptrho}
&|<\partial_t\rho_{\varepsilon,R},\psi>| \nonumber\\
=& \Big|-\lambda\int_{\Omega}\nabla\psi\cdot h^R(x)\rho_{\varepsilon,R} dx + \frac{\sigma^2}{2}\int_{\Omega} \Delta\psi(|h^R(x)|^2 +\varepsilon^2\big)\rho_{\varepsilon,R}dx \Big|\nonumber\\
\le& \lambda\Big|\int_{\Omega}\nabla\psi\cdot h^R(x)\rho_{\varepsilon,R} dx\Big| 
+\frac{\sigma^2}{2}\Big|\int_{\Omega} \Delta\psi|h^R(x)|^2 \rho_{\varepsilon,R}dx \Big|
+\frac{\sigma^2\varepsilon^2}{2}\Big|\int_{\Omega} \Delta\psi\rho_{\varepsilon,R}dx \Big|\nonumber\\
=:& \mathcal{I}_1 + \mathcal{I}_2 +\mathcal{I}_3. \end{align}

Observe that for the first item $\mathcal{I}_1$, by the Hölder inequality we have
\begin{align*}
\Big|\int_{\Omega}\nabla\psi\cdot h^R(x)\rho_{\varepsilon,R}dx\Big|\le&
\| \phi^R\|_{L^\infty(\Omega)}\int_{\Omega}|\nabla\psi||x- m_\alpha^f(\rho_{\varepsilon,R})|\rho_{\varepsilon,R}dx \nonumber\\
\le& \int_{\Omega}|\nabla\psi||x |\rho_{\varepsilon,R}dx + \int_{\Omega}|\nabla\psi||m_\alpha^f(\rho_{\varepsilon,R})|\rho_{\varepsilon,R}dx\nonumber\\
\le& \|\nabla\psi\|_{L^\frac{8}{3}(\Omega)}\Big(\int_{\Omega}|x |^4\rho_{\varepsilon,R}dx\Big)^\frac{1}{4}\Big(\int_\Omega|\rho_{\varepsilon,R}|^2dx\Big)^\frac{3}{8} \nonumber\\
&+ \|\nabla\psi\|_{L^2(\Omega)}\|\rho_{\varepsilon,R}\|_{L^2(\Omega)}\int_{\Omega}|x|\rho_{\varepsilon,R}dx.
\end{align*}
Hence applying the boundedness of moments and \eqref{rhoR}, we get 
\begin{align}\label{Is1}
\mathcal{I}_1 = \lambda\Big|\int_{\Omega}\nabla\psi\cdot h^R(x)\rho_{\varepsilon,R}dx\Big|\leq C(\|\nabla\psi\|_{L^2(\Omega)} + \|\nabla\psi\|_{L^{\frac{8}{3}}(\Omega)}).
\end{align}

For $\mathcal{I}_2$, a simple computation gives that 
\begin{align*}
\Big|\int_{\Omega}\Delta\psi| h^R(x)|^2\rho_{\varepsilon,R}dx\Big|\le&
\| \phi^R\|_{L^\infty(\Omega)}^2\int_{\Omega}|\Delta\psi||x- m_\alpha^f(\rho_{\varepsilon,R})|^2\rho_{\varepsilon,R}dx \nonumber\\
\le& 2\int_{\Omega}|\Delta\psi||x |^2\rho_{\varepsilon,R}dx + 2\int_{\Omega}|\Delta\psi||m_\alpha^f(\rho_{\varepsilon,R})|^2\rho_{\varepsilon,R}dx\nonumber\\
\le& 2\|\Delta\psi\|_{L^4(\Omega)}\Big(\int_{\Omega}|x |^4\rho_{\varepsilon,R}dx\Big)^\frac{1}{2}\Big(\int_\Omega|\rho_{\varepsilon,R}|^2dx\Big)^\frac{1}{4} \nonumber\\
&+ 2\|\Delta\psi\|_{L^2(\Omega)}\|\rho_{\varepsilon,R}\|_{L^2(\Omega)}\int_{\Omega}|x|^2\rho_{\varepsilon,R}dx.
\end{align*}
Thus, by the  boundedness of moments and \eqref{rhoR}, $\mathcal{I}_2$ can be shown
\begin{align}\label{Is3}
\mathcal{I}_2 = \frac{\sigma^2}{2}\Big|\int_{\Omega}\Delta\psi| h^R(x)|^2\rho_{\varepsilon,R}dx\Big| \le C(\|\Delta\psi\|_{L^2(\Omega)} +\|\Delta\psi\|_{L^4(\Omega)}).
\end{align}

For the last term of \eqref{ptrho}, we have
\begin{align}\label{tlast}
\mathcal{I}_3 = \frac{\sigma^2\varepsilon^2}{2}\Big|\int_{\Omega} \Delta\psi\rho_{\varepsilon,R}dx \Big|
\le C(\varepsilon)\|\Delta\psi\|_{L^{2}(\Omega)}\|\rho_{\varepsilon,R}\|_{L^2(\Omega)}.
\end{align}

Substituting $\mathcal{I}_1$, $\mathcal{I}_2$ and $\mathcal{I}_3$ i.e. the inequalities \eqref{Is1}-\eqref{tlast} into \eqref{ptrho}, we obtain for any $t>0$
\begin{align*}
|<\partial_t\rho_{\varepsilon,R},\psi> |\le &C(\|\nabla\psi\|_{L^2(\Omega)}+  \|\nabla\psi\|_{L^\frac{8}{3}(\Omega)} +\|\Delta\psi\|_{L^2(\Omega)} + \|\Delta\psi\|_{L^4(\Omega)})
\le C\|\psi\|_{W^{2,4}(\Omega)}.
\end{align*}
Therefore, we have 
\begin{align}\label{no,t}
\|\partial_t\rho_{\varepsilon,R}\|_{W^{-2,\frac{4}{3}}_{loc}(\mathbb{R}^d)} \le C, \quad t>0,
\end{align}
where $C$ is independent of $R,\varepsilon$. Hence we derive the following result
\begin{align}
\|\partial_t\rho_{\varepsilon,R}\|_{L^\infty(0,T;~W_{loc}^{-2,\frac{4}{3}}(\mathbb{R}^d))}\le C.
\end{align}
This concludes the proof of Lemma \ref{lem3.4}.

\end{proof}

\section{The existence and uniqueness of solutions }
We are now in a position to prove our main theorem. 
It is important to note that the uniform estimates established in Lemmas~\ref{lem2.1}--\ref{lem3.4} are independent of the parameters $R$ and $\varepsilon$. 
Therefore, we may set $R := \frac{1}{\varepsilon}$ in the model~\eqref{pdesR}, so that the function $\rho_\varepsilon := \rho_{\varepsilon,1/\varepsilon}$ satisfies the following equation
\begin{align}\label{pdes1}
\begin{cases}
& \partial_t\rho_{\varepsilon,} = \lambda\nabla\cdot\big(h^{\varepsilon}(x)\rho_{\varepsilon}\big) + \frac{\sigma^2}{2}\Delta\big((|h^\varepsilon(x)|^2+\varepsilon^2)\rho_{\varepsilon}\big),\\
& \rho_{0\varepsilon}(x):=\rho_{0}(x)*J_{\varepsilon}(x),
\end{cases}
\end{align}
where $$h^\varepsilon(x)=(h^1, h^2,\cdots,h^d):=(x- m_\alpha^f(\rho_{\varepsilon}))\phi^\frac{1}{\varepsilon}(x- m_\alpha^f(\rho_{\varepsilon})).$$ 
For all $i,j,k\in[d]$, it is easy to see that
\begin{align}\label{hs}
\| h^i\|_{L^\infty(\R^d)}\le \frac{C}{\varepsilon},~~\|\partial_{x_j}h^i\|_{L^\infty(\R^d)}\le C \ \mathrm{and}\ \|\partial_{x_j,x_k}^2h^i\|_{L^\infty(\R^d)}\le C\varepsilon.
\end{align}
Moreover, the uniform estimates \eqref{rhoR}, \eqref{rhoR2}, \eqref{trho} and moment estimates \eqref{4m}, \eqref{m2} established in Lemmas \ref{lem2.1}-\ref{lem3.4} remain valid for equation \eqref{pdes1}. In other words, we obtain the following remark.
\begin{rem}\label{rem3.1}
Assume that $f$ satisfies Assumption \ref{ass}, and let the non-negative 
 initial data $\rho_0\in L^1\cap L^2(\R^d)\cap \mathcal{P}_4(\RR^d)$ and $\rho_{\varepsilon}$ be the solution to the regularization problem \eqref{pdes1}. Then for any fixed $T>0$, it holds that
\begin{align}
&\|\rho_\varepsilon\|_{L^\infty(0,T;L^2(\RR^d)}\le C,\label{l2estimate}\\
&\|\rho_\varepsilon\|_{L^2(0,T;L^2\cap W_{loc}^{1,\gamma}(\RR^d))}\le C,~~\gamma\in(1,\frac{2d}{2+d}),\label{nrhos}\\
&\|\partial_t\rho_{\varepsilon}\|_{L^2(0,T;W^{-2,\frac{4}{3}}_{loc}(\R^d))}\le C,\label{trhos}
\end{align}
and the estimates of moments
\begin{align}\label{m24}
\sup_{t\in[0,T]}\int_{\R^d}|x|^2\rho_\varepsilon dx \le M_2 ~~~\text{and}~\sup_{t\in[0,T]}\int_{\R^d}|x|^4\rho_\varepsilon dx \le M_4,
\end{align}
where $C, M_2$ and $M_4$ are constants independent of $\varepsilon$.
\end{rem}
Following Remark \ref{rem3.1}, for any fixed \( r > 0 \), there exists a subsequence of \( \{\rho_{\varepsilon}\} \) (not relabeled for convenience) and a limit function
\[
\rho^r \in L^2\big(0,T; L^2 \cap W^{1,\gamma}(B_r(0))\big),
\]
such that the following weak convergence holds
\[
\rho_{\varepsilon} \rightharpoonup \rho^r \quad \text{in} \ L^2\big(0,T; L^2 \cap W^{1,\gamma}(B_r(0))\big), \quad \text{for } \gamma \in \left(1, \frac{2d}{2 + d}\right).
\]

Furthermore, adopting the uniform estimates \eqref{nrhos} and \eqref{trhos}, and noticing that $W^{1,\gamma}(B_r(0))\hookrightarrow\hookrightarrow  L^q(B_r(0))$,  $\ q \in[1,\frac{d\gamma}{d-\gamma})\subset[1,2)$, and when $\max\{\frac{4d}{3d+12},1\}<\gamma<\frac{2d}{d+2}$, $\frac{4d}{3d+8}<q<\frac{d\gamma}{d-\gamma}$, we have
 $ L^{q}(B_r(0)) \hookrightarrow W^{-2,\frac{4}{3}}(B_r(0))$. Thus, the Lions-Aubin lemma implies that $\rho_\varepsilon$ is relatively compact in $L^2(0,T; L^q(B_r(0)))$ for all $q\in [\frac{4d}{3d+8},\frac{d\gamma}{d-\gamma})$. Since the above spaces is of finite measure, we obtain the relative
compactness of $\rho_\varepsilon$ in $L^2(0,T; L^q(B_r(0)))$ for all $q\in[1, \frac{d\gamma}{d-\gamma})$. Thus we can
extract a subsequence of $\{\rho_\varepsilon\}$ (still denoted by $\rho_\varepsilon$ without relabeling) such that, as $\varepsilon\to 0$, the following convergence holds
\begin{align}\label{Rtovar}
\rho_{\varepsilon} \to \rho^r\qquad \mbox{in} \ L^2(0,T; L^q(B_r(0))), ~~q\in[1, \frac{d\gamma}{d-\gamma}).
\end{align}

Next, we verify that $\rho_\varepsilon$ converges strongly to some $\rho$ in $L^{q^*}(0,T; L^q(\mathbb{R}^d))$ with $q\in [1,\frac{d\gamma}{d-\gamma})$ $~\text{and}~q^*=\frac{q}{2-q}$.

\begin{lem}\label{lem3.1}
Let $\{\rho_\varepsilon\}$ be a sequence of solutions to PDE \eqref{pdes1} satisfying uniform estimates listed in Remark \ref{rem3.1}. Then $\{\rho_\varepsilon\}$ is relative compact in $L^2(0,T;L^q(\R^d)), ~q \in[1,\frac{d\gamma}{d-\gamma})\subset[1,2)$. Moreover, there exists a subsequence $\rho_\varepsilon$ (not relabeled) and a function $\rho \in L^\infty(0,T; L^2(\mathbb{R}^d))$ such that
\begin{align}
&\rho_{\varepsilon} \to \rho\quad \textup{in} \ L^{q*}(0,T; L^q(\mathbb{R}^d)), ~~q\in [1,\frac{d\gamma}{d-\gamma}) ~\textup{and}~q^*=\frac{q}{2-q},\label{rhouniform}\\
&\rho_{\varepsilon} \rightharpoonup \rho\quad\textup{in} \ L^2\big(0,T;L^2\cap W_{loc}^{1,\gamma}(\RR^d)\big),~~ \gamma\in(1,\frac{2d}{2+d}),\label{Rntovar2}\\
&\rho_{\varepsilon} \stackrel{*}{\rightharpoonup}\rho\quad\textup{in} \ L^\infty\big(0,T;L^2(\RR^d)\big),\label{Rntovar3}\\
&\partial_t\rho_{\varepsilon} \rightharpoonup \partial_t\rho\quad\textup{in} \ L^2\big(0,T;W^{-2,\frac{4}{3}}_{loc}(\R^d)\big), \label{Rntovar4}\\
&\sup_{t\in[0,T]}\int_{\R^d}|x|^4\rho dx\leq M_4\,\label{M4}.
\end{align}
\end{lem}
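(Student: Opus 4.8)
The plan is to upgrade the local compactness in (\ref{Rtovar}) to a global statement by a diagonal extraction over an exhausting sequence of balls, and then to control the mass at infinity using the uniform fourth-moment bound (\ref{m24}). First I would fix an increasing sequence $r_k\to\infty$ and apply (\ref{Rtovar}) successively on $B_{r_1}(0), B_{r_2}(0),\dots$, each time passing to a further subsequence; the Cantor diagonal argument then yields a single subsequence $\{\rho_\varepsilon\}$ (not relabeled) and a limit $\rho$, defined on all of $\R^d$, with $\rho_\varepsilon\to\rho$ in $L^2(0,T;L^q(B_r(0)))$ for every $r>0$ and every $q\in[1,\frac{d\gamma}{d-\gamma})$. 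The weak limits in (\ref{Rntovar2})--(\ref{Rntovar4}) identify with this same $\rho$ because weak and strong limits must agree on each $B_r(0)$; these follow directly from the uniform bounds (\ref{l2estimate}), (\ref{nrhos}), (\ref{trhos}) by Banach--Alaoglu (using that bounded sequences in reflexive spaces, and in the dual of a separable space for the $L^\infty$ bound, have weak/weak-$*$ convergent subsequences), again with a diagonal extraction to accommodate the $W^{1,\gamma}_{loc}$ and $W^{-2,4/3}_{loc}$ spaces.

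The substantive point is the strong convergence on all of $\R^d$ in the space $L^{q^*}(0,T;L^q(\R^d))$ with $q^*=\frac{q}{2-q}$, i.e. controlling the tails uniformly in $\varepsilon$. For this I would estimate, for any $r>0$ and $t\in[0,T]$, the mass of $\rho_\varepsilon$ outside $B_r(0)$ by Chebyshev:
\begin{align*}
\int_{\R^d\setminus B_r(0)}\rho_\varepsilon(t,x)\,dx\le \frac{1}{r^4}\int_{\R^d}|x|^4\rho_\varepsilon(t,x)\,dx\le\frac{M_4}{r^4},
\end{align*}
uniformly in $\varepsilon$ and $t$. Interpolating the $L^q$ norm on the tail between $L^1$ (controlled by $M_4/r^4$) and $L^2$ (controlled by (\ref{l2estimate})) via $\|\rho_\varepsilon\|_{L^q(\R^d\setminus B_r)}\le \|\rho_\varepsilon\|_{L^1(\R^d\setminus B_r)}^{1-\theta}\|\rho_\varepsilon\|_{L^2(\R^d\setminus B_r)}^{\theta}$ for the appropriate $\theta\in(0,1)$ with $\frac1q=(1-\theta)+\frac\theta2$, gives a tail bound that is $O(r^{-4(1-\theta)})$, uniformly in $\varepsilon$ and $t$, hence also after the time integration in $L^{q^*}(0,T;\cdot)$. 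Combining this with the local strong convergence (\ref{Rtovar}) on each $B_r(0)$: given $\eta>0$, choose $r$ large so the tail contribution (for every $\varepsilon$, and for $\rho$ itself by Fatou) is below $\eta$, then let $\varepsilon\to0$ on $B_r(0)$; a standard $\varepsilon/3$-type argument yields $\rho_\varepsilon\to\rho$ in $L^{q^*}(0,T;L^q(\R^d))$, which is (\ref{rhouniform}). One should also check that the exponent relationship $q^*=\frac{q}{2-q}$ is exactly what makes the local convergence $\rho_\varepsilon\to\rho$ in $L^2(0,T;L^q(B_r))$ (from (\ref{Rtovar}), which is $L^2$ in time) combine with the $L^\infty(0,T;L^2)$ bound to give $L^{q^*}$ in time on $B_r$ — indeed interpolating $\|\rho_\varepsilon-\rho\|_{L^q(B_r)}$ in $x$ and then in $t$ between the $L^2_tL^q_x$ and $L^\infty_tL^2_x$ controls produces the time exponent $q^*$.

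Finally, (\ref{M4}) follows from (\ref{m24}) by lower semicontinuity: for fixed $t$ (or after passing to a subsequence in time, using that $\rho_\varepsilon(t,\cdot)\to\rho(t,\cdot)$ in $L^1_{loc}$ along a subsequence for a.e.\ $t$), truncating $|x|^4$ by $|x|^4\wedge N$ and applying the strong local convergence plus Fatou's lemma in $N$ gives $\int_{\R^d}|x|^4\rho(t,x)\,dx\le \liminf_{\varepsilon\to0}\int_{\R^d}|x|^4\rho_\varepsilon(t,x)\,dx\le M_4$ for a.e.\ $t\in[0,T]$, hence the essential supremum bound. The main obstacle is the bookkeeping in the tail-interpolation step: one must verify that $\frac{4d}{3d+8}<q<\frac{d\gamma}{d-\gamma}<2$ leaves room for the interpolation exponent $\theta$ and that the resulting decay rate in $r$ is genuinely positive and uniform in $t$, so that the $\varepsilon/3$ argument closes; everything else is a routine combination of Banach--Alaoglu, the Lions--Aubin lemma already invoked in the text, and diagonal extraction.
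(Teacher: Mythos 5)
Your proposal is correct and follows essentially the same route as the paper: a Cantor diagonal extraction over an exhausting sequence of balls to build the global limit, Chebyshev with the uniform fourth-moment bound \eqref{m24} to control the tails, the $L^1$--$L^2$ interpolation $\|\cdot\|_{L^q}\le\|\cdot\|_{L^1}^{1-\theta}\|\cdot\|_{L^2}^{\theta}$ with $\theta=\tfrac{2q-2}{q}$ to produce the exponent $q^*=\tfrac{q}{2-q}$, Banach--Alaoglu for the weak and weak-$*$ limits, and Fatou's lemma for \eqref{M4}. The only cosmetic difference is that the paper first establishes convergence in $L^1(0,T;L^1(\R^d))$ and then interpolates globally, whereas you interpolate the tail piece separately before closing with an $\varepsilon/3$ argument; the substance is identical.
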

\begin{proof}
It follows from \eqref{Rtovar} that we can obtain that a subsequence $\{\rho_{\varepsilon_n}^1\}$
converges almost everywhere to $\rho^r$ as $n\to \infty$, namely
\begin{align*}
\rho_{\varepsilon_n}^1(t,x) \to \rho^r(t,x)\qquad \, ~~ \text {a.e.}  ~(t,x)\in[0,T]\times B_r(0).
\end{align*}
Next from subsequence $\{\rho_{\varepsilon_n}^1\}$ we extract a further subsequence $\{\rho_{\varepsilon_n}^2\}$ such that
\begin{align*}
\rho_{\varepsilon_n}^2(t,x) \to \rho^{2r}(t,x)\qquad \, ~~ \text {a.e.}  ~(t,x)\in[0,T]\times B_{2r}(0).
\end{align*}
Continuing with this process we have
\begin{align*}
& \rho_{\varepsilon_1}^1, \rho_{\varepsilon_2}^1,\rho_{\varepsilon_3}^1,\cdots, \rho_{\varepsilon_n}^1\to \rho^{r}\qquad \, ~~ \text {a.e.}  ~(t,x)\in[0,T]\times B_{r}(0),\\
& \rho_{\varepsilon_1}^2, \rho_{\varepsilon_2}^2,\rho_{\varepsilon_3}^2,\cdots, \rho_{\varepsilon_n}^2\to \rho^{2r}\qquad \, ~~ \text {a.e.}  ~(t,x)\in[0,T]\times B_{2r}(0),\\
& \rho_{\varepsilon_1}^3, \rho_{\varepsilon_2}^3,\rho_{\varepsilon_3}^3,\cdots, \rho_{\varepsilon_n}^3\to \rho^{3r}\qquad \, ~~ \text {a.e.}  ~(t,x)\in[0,T]\times B_{3r}(0),\\
&\cdots \cdots\quad  \cdots \cdots\quad  \cdots \cdots\\
& \rho_{\varepsilon_1}^n, \rho_{\varepsilon_2}^n,\rho_{\varepsilon_3}^n,\cdots, \rho_{\varepsilon_n}^n\to \rho^{nr}\qquad \, ~~ \text {a.e.}  ~(t,x)\in[0,T]\times B_{nr}(0).
\end{align*}
where $\{\rho_{\varepsilon_n}^{m+1}\}$ is a subsequence of $\{\rho_{\varepsilon_n}^{m}\}$.
For any $x\in \R^d$ and $t\in[0,T]$, we choose some $k\in N$
such that $x\in B_{kr}(0)$ and define $\rho(t,x):=\rho^{kr}(t,x)$. This definition is well-defined, since for any $x\in B_{kr}(0)\subset B_{\ell r}(0),~l>k$, we have $\rho^{kr}(\cdot,x)=\rho^{\ell r}(\cdot,x)$
on $B_{kr}(0)$ by the construction above. Consequently, using Cantor's diagonal argument we have a subsequence $\{\rho_{\varepsilon_n}^n\}_{n\geq 1}$ converging
almost everywhere to $\rho$ on $[0,T]\times \R^d$.

Now let us use the notation $\{\rho_{\varepsilon}\}$ for the subsequence $\{\rho_{\varepsilon_n}^n\}_{n\geq 1}$ again.
Then, by Fatou's lemma and the moment bound \eqref{m24}, we obtain
\begin{align}\label{t4m}
\sup_{t\in[0,T]}\int_{\R^d}|x|^4\rho dx=\sup_{t\in[0,T]}\int_{\R^d}\liminf_{\varepsilon\to 0}|x|^4\rho_\varepsilon dx\le \sup_{t\in[0,T]}\liminf_{\varepsilon\to 0}\int_{\R^d}|x|^4\rho_\varepsilon dx\le M_4\,.
\end{align}
Moreover, it follows from Fatou's lemma and Lemma \ref{lem3.1}  that $\rho \in L^\infty (0,T;L^2(\RR^d))$.

It remains to prove that $\rho_\varepsilon\to \rho$ in $L^{q^*}([0, T]; L^q(\R^d))$ as $\varepsilon\to0$. To do this, let us compute
\begin{align*}
\int_0^T\int_{\R^d}|\rho_\varepsilon - \rho| dxdt
\le&
\int_0^T\int_{B_r(0)}|\rho_\varepsilon - \rho| dxdt+
\int_0^T\int_{\R^d/B_r(0)}|\rho_\varepsilon - \rho| dxdt\\
\le&
\int_0^T\int_{B_r(0)}|\rho_\varepsilon - \rho| dxdt+
\frac{1}{r^4}\int_0^T\int_{\R^d/B_r(0)}|x|^4|\rho_\varepsilon - \rho| dxdt.
\end{align*}
Taking $\varepsilon\to 0$ and then $r\to\infty$, we find a subsequence $\rho_\varepsilon$ (not relabeled), which converges in $L^1(0,T;L^1(\R^d))$. Moreover, the estimate \eqref{l2estimate} implies that $\rho_\varepsilon,\rho\in L^\infty(0,T;L^2(\R^d))$. By the interpolation inequality, we show
\begin{align}\label{theta}
\|\rho_\varepsilon-\rho\|_{L^q(\R^d)}\le\|\rho_\varepsilon-\rho\|_{L^1(\R^d)}^{1-\theta}\|\rho_\varepsilon-\rho\|_{L^{2}(\R^d)}^\theta,
\end{align}
where $\frac{1}{q}=1-\theta+\frac{\theta}{2},\theta\in[0,1)$, that is $\theta=\frac{2q-2}{q}$ and $1\leq q<\frac{d\gamma}{d-\gamma}<2$.
Multiplying both sides of the inequality \eqref{theta} by $q^*=\frac{1}{1-\theta}$ and integrating over the time $[0,T]$, we obtain
\begin{align*}
\int_0^T\|\rho_\varepsilon - \rho\|_{L^{q}(\R^d)}^{q^*} dt\le\|\rho_\varepsilon - \rho\|_{L^\infty(0,T;L^2(\R^d))}^{\theta q^*}\int_0^T\|\rho_\varepsilon - \rho\|_{L^1(\R^d)}dx,
\end{align*} 
Thus, $\rho_\varepsilon$ converges strongly to $\rho$ in $L^{q^*}(0,T; L^q(\mathbb{R}^d))$ and $q^*=\frac{q}{2-q}$. 

Moreover, by the estimates in Remark \ref{rem3.1}, we have
the weak convergence relations \eqref{Rntovar2}- \eqref{Rntovar4}.
\end{proof}

Using Lemma \ref{lem3.1} , we can further obtain that a subsequence $\rho_\varepsilon$ (not relabeled)
converges to $\rho$ in $L^q(\RR^d)$ a.e. $t\in[0,T]$, namely
\begin{align}\label{Rtovar1}
\rho_{\varepsilon} \to \rho\qquad \mbox{in} \  L^q(\RR^d), ~~ \text {a.e.}  ~t\in[0,T]\quad \mbox{as }\varepsilon\to 0.
\end{align}
Subsequently, we establish a pivotal lemma that characterizes the convergence between $\rho_{\varepsilon}$ and $\rho$ in the 2-Wasserstein distance. This result is essential for demonstrating the convergence of $m_\alpha^f(\rho_{\varepsilon})$ to $m_\alpha^f(\rho)$ as $\varepsilon \to 0$, and its proof follows an argument analogous to that of~\cite[Lemma A.7]{gerber2023mean}.
\begin{lem}\label{lem3.6}
Suppose that $\rho_{\varepsilon}, \rho\in\mathcal{P}_4(\R^d)$ are density functions that satisfy $\rho_{\varepsilon}$ converging to $\rho$ in $L^q(\R^d)$ a.e. $t\in[0,T],\ q\in [1,\frac{d\gamma}{d-\gamma})\subset[1,2)$. Then $\rho_{\varepsilon}$ converges to $\rho$ in 2-Wasserstein distance, i.e.,
\begin{align}\label{W2}
\lim_{\varepsilon\to 0} W_2(\rho_{\varepsilon}, \rho)=0\qquad \textup{a.e.}~~ t\in[0,T].    
\end{align}
\end{lem}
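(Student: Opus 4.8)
The plan is to combine the pointwise (in $t$) strong $L^q$ convergence with the uniform fourth-moment bound \eqref{M4}, and to recall that on $\mathcal{P}_4(\mathbb{R}^d)$ convergence in $2$-Wasserstein distance is equivalent to weak convergence of measures together with convergence of second moments (equivalently, it suffices to have weak convergence plus uniform integrability of $|x|^2$, which is guaranteed by a uniform bound on $\int |x|^4 d\mu$). So for a.e.\ fixed $t\in[0,T]$ I would argue: first, $\rho_\varepsilon(t,\cdot)\to\rho(t,\cdot)$ in $L^q(\mathbb{R}^d)$ implies, along a further subsequence, pointwise a.e.\ convergence in $x$; combined with $\|\rho_\varepsilon\|_{L^\infty(0,T;L^2)}\le C$ and mass conservation $\int\rho_\varepsilon\,dx=\int\rho_0\,dx=1$, this yields $\rho_\varepsilon(t,\cdot)\rightharpoonup\rho(t,\cdot)$ weakly as probability measures (test against $C_b$ functions, splitting the integral into a ball where one uses strong $L^q$ or a.e.\ convergence plus Vitali/dominated convergence, and a tail where one uses the $|x|^4$-moment bound to make the remainder small uniformly in $\varepsilon$).

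Second, I would upgrade weak convergence to $W_2$-convergence using the moment control. Since $\sup_\varepsilon\int_{\mathbb{R}^d}|x|^4\rho_\varepsilon(t,x)\,dx\le M_4$, the family $\{|x|^2\rho_\varepsilon(t,\cdot)\}_\varepsilon$ is uniformly integrable: on $\{|x|>\Lambda\}$ one bounds $\int_{|x|>\Lambda}|x|^2\rho_\varepsilon\,dx\le \Lambda^{-2}\int|x|^4\rho_\varepsilon\,dx\le M_4/\Lambda^2$, which tends to $0$ uniformly in $\varepsilon$ as $\Lambda\to\infty$. Hence $\int|x|^2\rho_\varepsilon(t,x)\,dx\to\int|x|^2\rho(t,x)\,dx$ (the limit moment is finite by Fatou, $\le M_4$), and together with the weak convergence this is exactly the characterization of $W_2(\rho_\varepsilon(t,\cdot),\rho(t,\cdot))\to 0$; see e.g.\ the standard criterion in Villani's book. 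This gives \eqref{W2} along the extracted subsequence.

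Third, I would remove the subsequence: since $\rho\in\mathcal{P}_4(\mathbb{R}^d)$ is fixed (it is the limit identified in Lemma~\ref{lem3.1}), every subsequence of $\{\rho_\varepsilon(t,\cdot)\}$ has a further subsequence along which the above argument applies and gives $W_2$-convergence to the same limit $\rho(t,\cdot)$; by the usual subsequence principle the whole sequence converges, so $\lim_{\varepsilon\to0}W_2(\rho_\varepsilon(t,\cdot),\rho(t,\cdot))=0$ for a.e.\ $t\in[0,T]$.

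The main obstacle is the passage from pointwise-in-$x$ (or strong $L^q$) convergence to genuine weak convergence of probability measures: one must handle the tails carefully because $L^q$ convergence with $q<2$ does not by itself control mass escaping to infinity, and this is precisely where the uniform $\int|x|^4\rho_\varepsilon\,dx\le M_4$ estimate is indispensable — both to kill the tail in the weak-convergence step and to provide the uniform integrability of $|x|^2$ that promotes weak convergence to $W_2$-convergence. A minor technical point is that all these statements hold only for a.e.\ $t$ (the exceptional null set coming from Lemma~\ref{lem3.1}), which is harmless for the subsequent use in identifying $m_\alpha^f(\rho_\varepsilon)\to m_\alpha^f(\rho)$ via the Lipschitz estimate \eqref{lemeq1}.
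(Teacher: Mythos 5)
Your proposal is correct and follows essentially the same route as the paper: both invoke the standard characterization of $W_2$-convergence (weak convergence of measures plus convergence of second moments, cf.\ Villani), prove each ingredient by splitting integrals into a ball $B_{r_\delta}(0)$ handled via the strong $L^q(\R^d)$ convergence and a tail controlled uniformly in $\varepsilon$ by the fourth-moment bound $\sup_\varepsilon\int|x|^4\rho_\varepsilon\,dx\le M_4$ (with $\int|x|^4\rho\,dx\le M_4$ by Fatou). The only cosmetic difference is that the paper estimates the ball contribution directly by H\"older's inequality against $\|\rho_\varepsilon-\rho\|_{L^q(\R^d)}$, so no subsequence extraction or subsequence principle is needed.
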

\begin{proof}
 The conditions of convergence in the 2-Wasserstein distance can be seen in \cite[Chapter 6]{villani2008grundlehren}. We mainly show that $\int_{\R^d}|x|^2\rho_{\varepsilon} dx \to \int_{\R^d}|x|^2\rho dx$ as
$\varepsilon\to 0$ and also $\rho_{\varepsilon}$ converges $\rho$ weakly \text{a.e.}~~ $t\in[0,T]$. 

First, for any fixed $r > 0$, by the boundedness of the fourth moment $\rho_{\varepsilon}$ in \eqref{m24}, we have
\begin{align}\label{2m0}
\int_{\R^d/B_{r}(0)}|x|^2\rho_{\varepsilon}(x)dx
\le &\frac{1}{r^2}\int_{\R^d/B_r(0)}|x|^4\rho_{\varepsilon}(x)dx \le \frac{M_4}{r^2}.
\end{align}
Then if for every $\delta>0$, there exists a $r_\delta :=\sqrt{\frac{M_4}{\delta}}>0$
such that whenever $r\ge r_\delta$, it follows that
\begin{align}\label{r2m0}
\int_{\R^d/B_{r_\delta}(0)}|x|^2\rho_{\varepsilon}(x)dx\le  \delta.  
\end{align}
Similarly, since we know that for
$\int_{\R^d}|x|^4\rho(x)dx\le M_4$ in \eqref{t4m}, the same computation as in \eqref{2m0} holds for $\rho$, and we
also have $\int_{\R^d/B_{r_\delta}(0)}|x|^2\rho(x)dx\le  \delta$.

Then, we obtain a strong convergent of the second moment. We compute
\begin{align}\label{ssm}
&\Big|\int_{\R^d}|x|^2\rho_{\varepsilon}(x)dx - \int_{\R^d}|x|^2\rho(x)dx\Big|\nonumber\\
\le&\Big|\int_{B_{r_\delta}(0)}|x|^2(\rho_{\varepsilon}(x) - \rho(x))dx\Big|+\Big|\int_{\R^d/B_{r_\delta}(0)}|x|^2\rho_{\varepsilon}(x)dx - \int_{\R^d/B_{r_\delta}(0)}|x|^2\rho(x)dx\Big|\nonumber\\
\le& \Big(\int_{B_{r_\delta}(0)}|x|^\frac{2q}{q-1}dx\Big)^\frac{q-1}{q}\|\rho_{\varepsilon}-\rho\|_{L^q(\R^d)}
+ \Big|\int_{\R^d/B_{r_\delta}(0)}|x|^2\rho_{\varepsilon}(x)dx\Big|+ \Big|\int_{\R^d/B_{r_\delta}(0)}|x|^2\rho(x)dx\Big|\nonumber\\
\le& C_{r_\delta}\|\rho_{\varepsilon}-\rho\|_{L^q(\R^d)} + C(T)\delta.
\end{align}
By the fact that $\lim_{\varepsilon\to 0}\|\rho_{\varepsilon} - \rho\|_{L^q(\R^d)} = 0$ a.e. $t\in[0,T]$ in \eqref{rhouniform}, we have 
\begin{align*}
\mathop{\lim\sup}_{\varepsilon\to 0}\Big|\int_{\R^d}|x|^2\rho_{\varepsilon}(x)dx - \int_{\R^d}|x|^2\rho(x)dx\Big| \leq C(T)\delta\qquad\text{a.e.}~~ t\in[0,T].
\end{align*}
By the arbitrariness of $\delta$, we know that $\rho_{\varepsilon}$ converges to $\rho$ at the second moment a.e. $t\in[0,T]$. 

Next, we show that $\rho_{\varepsilon}$  converges weakly to $\rho$ a.e. $t\in[0,T]$ as probability density. For any $\delta> 0$ and $\Psi\in\mathcal{C}_b(\R^d)$, with the similar computation as \eqref{2m0}, we have
\begin{align*}
    \int_{\R^d/B_{r_\delta}(0)}|\Psi(x)|\rho_{\varepsilon}(x)dx\le C\delta, \qquad \int_{\R^d/B_{r_\delta}(0)}|\Psi(x)|\rho(x)dx\le C\delta.
\end{align*}
Thus, we obtain
\begin{align*}
&\Big| \int_{\R^d}\Psi(x)\rho_{\varepsilon}(x)dx -  \int_{\R^d}\Psi(x)\rho(x)dx\Big|\\
\le&\Big|\int_{B_{r_\delta}(0)}\Psi(x)(\rho_{\varepsilon}(x) - \rho(x))dx\Big|+ \Big|\int_{\R^d/B_{r_\delta}(0)}\Psi(x)\rho_{\varepsilon}(x)dx - \int_{\R^d/B_{r_\delta}(0)}\Psi(x)\rho(x)dx\Big|\\
\le& \Big(\int_{B_{r_\delta}(0)}\Psi(x)^\frac{q}{q-1}dx\Big)^\frac{q-1}{q}\|\rho_{\varepsilon}-\rho\|_{L^q(\R^d)} + \Big|\int_{\R^d/B_{r_\delta}(0)}\Psi(x)\rho_{\varepsilon}(x)dx\Big|+ \Big|\int_{\R^d/B_{r_\delta}(0)}\Psi(x)\rho(x)dx\Big|\\
 \le& C_{r_\delta}\|\rho_{\varepsilon}-\rho\|_{L^q(\R^d)} + C\delta\qquad \text{a.e}~ t\in[0,T], 
\end{align*}
which implies that
\begin{align*}
\mathop{\lim\sup}_{\varepsilon\to 0 }\Big| \int_{\R^d}\Psi(x)\rho_{\varepsilon}(x)dx -  \int_{\R^d}\Psi(x)\rho(x)dx\Big|\leq C\delta\qquad\text{a.e.}~~ t\in[0,T].
\end{align*}
Again, by the arbitrariness of $\delta>0$ and $\Psi\in\mathcal{C}_b(\R^d)$, we find that $\rho_{\varepsilon}$ weakly converges to $\rho$ \text{a.e.}~~ $t\in[0,T]$.

In conclusion, $\rho_\varepsilon$ converges to $\rho$ under the 2-Wasserstein distance for a.e. $t\in[0,T]$.
The lemma holds.

\end{proof}

{\bf The proof of Theorem \ref{thm1}}.
Taking any test function $\varphi\in\mathcal{C}_c^\infty((0,T)\times\R^d)$ and $\Omega:=\mathrm{supp}\varphi$.  Since $\rho_{\varepsilon}$ satisfies the equation \eqref{pdes1}, the following integral equality holds
\begin{align*}
-\int_0^T\int_{\R^d}\left(\partial_t\rho_{\varepsilon}\varphi
+\frac{\varepsilon^2\sigma^2}{2}\nabla\rho_{\varepsilon}\cdot\nabla\varphi \right)dxdt =&
\int_0^T\int_{\R^d}\lambda\big(h^\varepsilon(x)\rho_{\varepsilon}\big)\cdot\nabla\varphi+
\frac{\sigma^2}{2}|h^\varepsilon|^2 \nabla\rho_{\varepsilon}\cdot\nabla\varphi dxdt \\
&+\frac{\sigma^2}{2}\int_0^T\int_{\R^d}\nabla |h^\varepsilon|^2\rho_{\varepsilon}\cdot\nabla\varphi dxdt.
\end{align*}

To prove that $\rho$ is a weak solution,  based on the weak convergence relations \eqref{Rntovar2} and \eqref{Rntovar4} of $\rho_\varepsilon$ given by Lemma \ref{lem3.1}, we obtain the following convergence for the left-hand side of the equality as $\varepsilon \to 0$,
\begin{align}\label{i0}
\int_0^T\int_{\R^d}\left(\partial_t\rho_{\varepsilon}\varphi
+\frac{\varepsilon^2\sigma^2}{2}\nabla\rho_{\varepsilon}\cdot\nabla\varphi\right)dxdt \to \int_0^T\int_{\R^d}\partial_t\rho\varphi dxdt.
\end{align}
We now examine the right-hand side of the equation through the following convergence results:
\begin{itemize}
    \item [(i)]$\int_0^T\int_{\RR^d}\big(h^\varepsilon(x)\rho_{\varepsilon}\big)\cdot\nabla\varphi dxdt\to \int_0^T\int_{\RR^d}\big((x - m_\alpha^f(\rho))\rho\big)\cdot\nabla\varphi dxdt $,\\
    \item  [(ii)]$\int_0^T\int_{\RR^d}|h^\varepsilon|^2 \nabla\rho_{\varepsilon}\cdot\nabla\varphi dxdt \to \int_0^T\int_{\RR^d}|x - m_\alpha^f(\rho)|^2\nabla\rho\cdot\nabla\varphi dxdt $,\\
    \item [(iii)]$\int_0^T\int_{\RR^d}\nabla |h^\varepsilon|^2\rho_{\varepsilon}\cdot\nabla\varphi dxdt \to \int_0^T\int_{\RR^d}\nabla \big|(x - m_\alpha^f(\rho))\big|^2\rho\cdot\nabla\varphi dxdt $.
\end{itemize} 
Since the test function $\varphi\in\mathcal{C}_c^\infty([0,T]\times\R^d)$ has compact support in $\R^d$, we may restrict our spatial integration to a bounded domain $\Omega$. Consequently, it suffices to examine the convergence properties on $[0,T]\times\Omega$. Prior to establishing the three convergence relations, we first present the following key lemma.
\begin{lem}\label{lem3.7}
Suppose that $f$ satisfies Assumption \ref{ass} and $\rho_{\varepsilon}, \rho\in\mathcal{P}_4(\R^d)$ are density functions that satisfy $\rho_{\varepsilon}$ converging to $\rho$ in $L^q(\R^d)$ a.e. $t\in[0,T],\ q\in [1,\frac{d\gamma}{d-\gamma})\subset[1,2)$. Then for all $\varepsilon>0$ and for all $p\geq 1$, the following convergence relations hold
\begin{align}
&h^\varepsilon(x)\to(x - m_\alpha^f(\rho))\quad \mathrm{for~any} \ x\in\Omega\quad \text{a.e.} ~t\in[0,T],  \label{h-m} \\
&|h^\varepsilon(x)|^2\to|x - m_\alpha^f(\rho)|^2\quad \mathrm{for~any} \ x\in\Omega\quad \text{a.e.} ~t\in[0,T],\label{h2-m2}\\
&\nabla h^\varepsilon(x)\to\nabla (x - m_\alpha^f(\rho))\quad \mathrm{for~any} \ x\in\Omega\quad \text{a.e.} ~t\in[0,T],  \label{gh-gm} \\
&\nabla(|h^\varepsilon(x)|^2)\to \nabla(|x - m_\alpha^f(\rho)|^2)\quad \mathrm{for~any} \ x\in\Omega\quad \text{a.e.} ~t\in[0,T].\label{gh2-gm2} 
\end{align}   
\end{lem}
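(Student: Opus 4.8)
The plan is to reduce all four statements to a single fact, namely that the consensus points converge, $m_\alpha^f(\rho_\varepsilon)\to m_\alpha^f(\rho)$ a.e. $t\in[0,T]$, and to exploit that on the compact set $\Omega$ the truncation $\phi^{1/\varepsilon}$ is eventually identically one, so that $h^\varepsilon$ reduces to an explicit affine function of $x$.

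First I would record a uniform bound on the consensus points. Both $\rho_\varepsilon(t)$ and $\rho(t)$ have second moments bounded uniformly in $\varepsilon$ and $t$ --- for $\rho_\varepsilon$ directly from \eqref{m24}, for $\rho$ by Cauchy--Schwarz together with \eqref{M4} --- so \eqref{ms} of Lemma~\ref{lem: useful estimates} gives $|m_\alpha^f(\rho_\varepsilon)|,|m_\alpha^f(\rho)|\le M$ for a.e. $t$, with $M$ independent of $\varepsilon$ and $t$. Fixing a compact neighbourhood $\Omega'\supset\Omega$, it follows that $|x-m_\alpha^f(\rho_\varepsilon)|\le R_{\Omega'}:=\sup_{y\in\Omega'}|y|+M$ for every $x\in\Omega'$ and a.e. $t$, uniformly in $\varepsilon$. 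Since $\phi^{1/\varepsilon}(v)=\phi(\varepsilon|v|)$ and $\phi\equiv1$ on $[0,1]$, choosing $\varepsilon_0$ with $\varepsilon_0R_{\Omega'}<1$ forces $\phi^{1/\varepsilon}\big(x-m_\alpha^f(\rho_\varepsilon)\big)=1$ for all $\varepsilon<\varepsilon_0$, all $x\in\Omega'$ and a.e. $t$; hence for small $\varepsilon$ one has the identity $h^\varepsilon(x)=x-m_\alpha^f(\rho_\varepsilon)$ on $\Omega'$, and in particular $\nabla_x h^\varepsilon(x)=\mathrm{Id}$ and $\nabla_x(|h^\varepsilon(x)|^2)=2\big(x-m_\alpha^f(\rho_\varepsilon)\big)$ for $x\in\Omega$.

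Next I would invoke Lemma~\ref{lem3.6}: since $\rho_\varepsilon(t),\rho(t)\in\mathcal{P}_{2,K}(\R^d)$ for a.e. $t$ with a common constant $K$ (independent of $\varepsilon$), the Lipschitz estimate \eqref{lemeq1} yields $|m_\alpha^f(\rho_\varepsilon)-m_\alpha^f(\rho)|\le L_m W_2(\rho_\varepsilon,\rho)\to0$ a.e. $t$. Combining this with the identity above, all four limits are immediate: \eqref{h-m} is the convergence $x-m_\alpha^f(\rho_\varepsilon)\to x-m_\alpha^f(\rho)$; \eqref{h2-m2} follows by continuity of $v\mapsto|v|^2$; \eqref{gh-gm} holds because both sides equal $\mathrm{Id}$; and \eqref{gh2-gm2} is $2\big(x-m_\alpha^f(\rho_\varepsilon)\big)\to 2\big(x-m_\alpha^f(\rho)\big)$.

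The only point requiring care is the bookkeeping that makes the truncation disappear uniformly: one must verify that the radius $R_{\Omega'}$, and hence the threshold $\varepsilon_0$, is independent of $t$, which is precisely what the $t$-uniform moment bounds \eqref{m24} and \eqref{M4} guarantee. Once that is in place, the lemma follows as a direct consequence of Lemmas~\ref{lem: useful estimates} and~\ref{lem3.6}.
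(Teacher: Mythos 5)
Your proof is correct, and it rests on the same two pillars as the paper's own argument: the convergence $m_\alpha^f(\rho_\varepsilon)\to m_\alpha^f(\rho)$ for a.e.\ $t$, obtained from Lemma~\ref{lem3.6} together with the Lipschitz estimate \eqref{lemeq1} (which is licensed by the uniform second-moment bounds you cite), and the fact that the cutoff is harmless on the compact support of the test function. Where you genuinely differ is in how the cutoff is neutralized. The paper keeps $\phi^{1/\varepsilon}$ in play throughout: it splits $h^\varepsilon(x)-(x-m_\alpha^f(\rho))$ into three terms, applies the mean value theorem to $\phi$ to control $\phi^{1/\varepsilon}(x-m_\alpha^f(\rho_\varepsilon))-\phi^{1/\varepsilon}(x-m_\alpha^f(\rho))$ (extracting a factor $\varepsilon$), and then runs separate component-wise computations for \eqref{gh-gm} and \eqref{gh2-gm2} using $|\partial_{x_i}\phi^{1/\varepsilon}|\le C\varepsilon$ and $|\phi^{1/\varepsilon}-1|\to 0$. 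You instead observe that $\phi^{1/\varepsilon}(x-m_\alpha^f(\rho_\varepsilon))=\phi\big(\varepsilon|x-m_\alpha^f(\rho_\varepsilon)|\big)\equiv 1$ on a neighbourhood of $\Omega$ once $\varepsilon<\varepsilon_0(\Omega,M)$, with $\varepsilon_0$ independent of $t$ precisely because of the $t$-uniform moment bounds \eqref{m24} and \eqref{M4}; this turns $h^\varepsilon(x)=x-m_\alpha^f(\rho_\varepsilon)$ into an exact identity for small $\varepsilon$, so that \eqref{h2-m2}, \eqref{gh-gm} and \eqref{gh2-gm2} become immediate corollaries of \eqref{h-m}. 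Your route is shorter and eliminates the mean-value-theorem bookkeeping; the paper's term-by-term estimates would still be needed if one worked on unbounded sets where the cutoff never saturates, but on a fixed compact $\Omega$ that generality buys nothing. (Incidentally, you correctly retain the Lipschitz constant in $|m_\alpha^f(\rho_\varepsilon)-m_\alpha^f(\rho)|\le L_m W_2(\rho_\varepsilon,\rho)$, which the paper's display \eqref{h1-m1} silently absorbs.)
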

\begin{proof}
First, we prove \eqref{h-m} holds true. Indeed,
\begin{align}\label{hRxm}
|h^\varepsilon(x) - (x - m_\alpha^f(\rho))|=& \Big|(x - m_\alpha^f(\rho_{\varepsilon}))\phi^\frac{1}{\varepsilon}(x - m_\alpha^f(\rho_{\varepsilon}))-(x - m_\alpha^f(\rho))\Big|\nonumber\\
\le& \Big|\big(m_\alpha^f(\rho) -m_\alpha^f(\rho_\varepsilon)\big)\phi^\frac{1}{\varepsilon}(x - m_\alpha^f(\rho_\varepsilon))\Big|\nonumber\\
& + \Big|(x - m_\alpha^f(\rho))\Big(\phi^\frac{1}{\varepsilon}(x - m_\alpha^f(\rho_{\varepsilon})) - \phi^\frac{1}{\varepsilon}(x - m_\alpha^f(\rho))\Big) \Big|\nonumber\\
& + \Big|(x - m_\alpha^f(\rho))\phi^\frac{1}{\varepsilon}(x - m_\alpha^f(\rho)) - (x - m_\alpha^f(\rho)) \Big|.
\end{align} 
The estimates for the first and third terms are immediate. For the second term, we compute
\begin{align}\label{xmphiR}
&\Big|(x - m_\alpha^f(\rho))\Big(\phi^\frac{1}{\varepsilon}(x - m_\alpha^f(\rho_{\varepsilon})) - \phi^\frac{1}{\varepsilon}(x - m_\alpha^f(\rho))\Big) \Big|\nonumber\\
\le& \big|x - m_\alpha^f(\rho)\big|
\big|\phi^\frac{1}{\varepsilon}(x - m_\alpha^f(\rho_{\varepsilon})) - \phi^\frac{1}{\varepsilon}(x - m_\alpha^f(\rho)) \big|\nonumber\\
\le& \big|x - m_\alpha^f(\rho)\big|
|\nabla\phi(\xi)| \Big|{\varepsilon}|x - m_\alpha^f(\rho_{\varepsilon})| - {\varepsilon}|x - m_\alpha^f(\rho)|\Big|\nonumber\\
\le& \varepsilon C(\Omega)\big| m_\alpha^f(\rho_{\varepsilon}) -  m_\alpha^f(\rho) \big|.
\end{align}
Note that the second inequality applies the Mean Value Theorem, where $\xi$ lies between $\varepsilon|x - m_\alpha^f(\rho_{\varepsilon})|$ and $\varepsilon|x - m_\alpha^f(\rho)|$. 
Putting \eqref{xmphiR} into \eqref{hRxm}, and combining \eqref{W2}, we obtain for any $x\in\Omega$, as $\varepsilon\to 0$
\begin{align}\label{h1-m1}
\Big|h^\varepsilon(x)-(x - m_\alpha^f(\rho))\Big| 
\le& |\phi^\frac{1}{\varepsilon}|\big|m_\alpha^f(\rho_\varepsilon) - m_\alpha^f(\rho) \big| + \varepsilon C(\Omega)\Big| m_\alpha^f(\rho_{\varepsilon,}) -  m_\alpha^f(\rho) \Big|\nonumber\\
&+ \Big|(x - m_\alpha^f(\rho))(\phi^\frac{1}{\varepsilon} - 1)\Big|\nonumber\\
\le & W_2(\rho_{\varepsilon},\rho) + C(\Omega)\big|\phi^\frac{1}{\varepsilon} - 1\big|\to 0\qquad \text{a.e.}~ t\in[0,T].
\end{align} 

Effortlessly, by the above \eqref{h1-m1} we check as $\varepsilon\to 0$
\begin{align*}
&\Big||h^\varepsilon(x)|^2-|x - m_\alpha^f(\rho)|^2\Big|\\
=&\Big|h^\varepsilon(x)+(x - m_\alpha^f(\rho))\Big|\Big|h^\varepsilon(x)-(x - m_\alpha^f(\rho))\Big|\\
\le& C(\Omega)\Big|h^\varepsilon(x)-(x - m_\alpha^f(\rho))\Big|\to 0
\quad \ x\in\Omega\quad \text{a.e.} ~t\in[0,T].
\end{align*}
Hence, the limit relation \eqref{h2-m2} holds.

Next, we prove the convergence of the spatial derivative \eqref{gh-gm}. For simplicity, we need only prove the component form holds. For any $i=1,\cdots,d$, we have
\begin{align*}
&|\partial_{x_i} h^\varepsilon(x) - \partial_{x_i}(x - m_\alpha^f(\rho))|\nonumber\\
=& \Big|\partial_{x_i}\Big((x - m_\alpha^f(\rho_{\varepsilon}))\phi^\frac{1}{\varepsilon}(x - m_\alpha^f(\rho_{\varepsilon}))\Big)-\partial_{x_i}(x - m_\alpha^f(\rho))\Big|\nonumber\\
=&|\phi^\frac{1}{\varepsilon}(x - m_\alpha^f(\rho_{\varepsilon})) + (x_i - (m_\alpha^f(\rho_{\varepsilon}))_i)\partial_{x_i}\phi^\frac{1}{\varepsilon}(x - m_\alpha^f(\rho_{\varepsilon}))-1|\nonumber\\
\le&|\phi^\frac{1}{\varepsilon}(x - m_\alpha^f(\rho_{\varepsilon}))- \phi^\frac{1}{\varepsilon}(x - m_\alpha^f(\rho))| +|\phi^\frac{1}{\varepsilon}(x - m_\alpha^f(\rho))-1| +|x - m_\alpha^f(\rho_{\varepsilon})||\partial_{x_i}\phi^\frac{1}{\varepsilon}|\nonumber\\
\le& C\varepsilon\big| m_\alpha^f(\rho_{\varepsilon}) -  m_\alpha^f(\rho) \big|+|\phi^\frac{1}{\varepsilon}(x - m_\alpha^f(\rho))-1| +C(\Omega)\varepsilon
. 
\end{align*} 
This leads to the following result
\begin{align}\label{nc1}
\Big|\nabla h^\varepsilon(x) -
\nabla(x - m_\alpha^f(\rho))\Big|\to 0 \quad \ x\in\Omega\quad \text{a.e.} ~t\in[0,T]~~ \text{as} ~\varepsilon\to 0.
\end{align}

It then follows easily that the squared derivative converges \eqref{gh2-gm2}
\begin{align*}
&\Big|\nabla(|h^\varepsilon(x)|^2)-\nabla(|x - m_\alpha^f(\rho)|^2)\Big|\\
=&2\Big|h^\varepsilon(x)(\nabla(h^\varepsilon(x)))^\intercal-(x - m_\alpha^f(\rho))(\nabla(x - m_\alpha^f(\rho)))^\intercal\Big|\\
\le &2|h^\varepsilon(x)-(x - m_\alpha^f(\rho))||\nabla(h^\varepsilon(x))| + 2|x - m_\alpha^f(\rho)||\nabla(h^\varepsilon(x))-\nabla(x - m_\alpha^f(\rho))\Big|\\
\le &C|h^\varepsilon(x)-(x - m_\alpha^f(\rho))| + C(\Omega)|\nabla(h^\varepsilon(x))-\nabla(x - m_\alpha^f(\rho))\Big|.
\end{align*}
Given the two convergence relations \eqref{h1-m1} and \eqref{nc1}, we conclude that as $\varepsilon\to 0$
\begin{align}
\Big|\nabla (|h^\varepsilon(x)|^2)-
\nabla(|x - m_\alpha^f(\rho)|^2)\Big|\to 0\quad \ x\in\Omega\quad \text{a.e.} ~t\in[0,T].
\end{align}
The proof of this theorem is now completed.
\end{proof}

Now, we turn our attention to proving the three convergence relationships separately.  

\begin{proof*}
(i) We check
\begin{align*} &\Big|\int_0^T\int_{\Omega}\big(h^\varepsilon(x)\rho_{\varepsilon} - (x - m_\alpha^f(\rho))\rho\big)\cdot\nabla\varphi dxdt\Big|\\
\le &\Big|\int_0^T\int_{\Omega}h^\varepsilon(x)(\rho_{\varepsilon} - \rho)\cdot\nabla\varphi dxdt\Big| + \Big|\int_0^T\int_{\Omega}\rho \Big(h^\varepsilon(x) - \big(x- m_\alpha^f(\rho)\big)\Big) \cdot\nabla\varphi dxdt\Big|\\
\le &\big\| h^\varepsilon(x)\cdot\nabla\varphi\big\|_{L^{(q^*)'}(0,T;L^{q'}(\Omega))}\big\|\rho_{\varepsilon} - \rho\big\|_{L^{q^*}(0,T;L^q(\Omega))}\\
&+ \|h^\varepsilon(x) - \big(x- m_\alpha^f(\rho)\big)\|_{L^2(0,T;L^2(\Omega))}\|\nabla\varphi\cdot\rho\|_{L^2(0,T;L^2(\Omega))} 
\to 0 \qquad \mbox{as} \ \varepsilon\to 0,
\end{align*}
where $1/{q'} + 1/q = 1$ and $1/{(q^*)'} + 1/{q^*} = 1$. 
Here, the first term converges as $
\rho_{\varepsilon} \to \rho$ in $ L^{q*}(0,T; L^q(\mathbb{R}^d)), ~~q\in [1,\frac{d\gamma}{d-\gamma})$ and $q^*=\frac{q}{2-q}$ in \eqref{rhouniform}. And for the second term, since for any $(t,x)\in[0,T]\times\Omega$, the sequence $h^\varepsilon$ convergence almost everywhere to $(x-m_\alpha^f(\rho))$ in \eqref{h-m} and $\big|h^\varepsilon(x) - \big(x- m_\alpha^f(\rho)\big)\big|\le C(\Omega)$, apply the Dominated Convergence Theorem, we have 
\begin{align*}
\lim_{\varepsilon\to 0}\int_0^T\int_{\Omega}\big|h^\varepsilon(x) - \big(x- m_\alpha^f(\rho)\big)\big|^2dxdt =0.    
\end{align*}
Hence there holds
\begin{align}\label{i}
&\Big|\int_0^T\int_{\Omega}\big(h^\varepsilon(x)\rho_{\varepsilon} - (x - m_\alpha^f(\rho))\rho\big)\cdot\nabla\varphi dxdt\Big|
\to 0 \qquad \mbox{as} \ \varepsilon\to 0.    
\end{align}

(ii) We verify
\begin{align}\label{JJ}
&\Big|\int_0^T\int_{\Omega}\big(|h^\varepsilon(x)|^2\nabla\rho_{\varepsilon} - |x -m_\alpha^f(\rho)|^2\nabla\rho\big)\cdot \nabla\varphi dxdt\Big|\nonumber\\
\le & \Big|\int_0^T\int_{\Omega}\big(|h^\varepsilon(x)|^2 - |x- m_\alpha^f(\rho)|^2\big) \nabla\rho_{\varepsilon}\cdot\nabla\varphi dxdt\Big|\nonumber\\
&+ \Big|\int_0^T\int_{\Omega}|x- m_\alpha^f(\rho)|^2\nabla(\rho_{\varepsilon} - \rho)\cdot\nabla\varphi  dxdt\Big|\nonumber\\
\le & \||h^\varepsilon(x)|^2 - |x- m_\alpha^f(\rho)|^2\|_{L^2(0,T;L^{\frac{\gamma}{\gamma-1}}(\Omega))}\|\nabla\varphi\cdot\nabla\rho_\varepsilon\|_{L^2(0,T;L^{\gamma}(\Omega))}\nonumber\\
&+ \Big|\int_0^T\int_{\Omega}|x- m_\alpha^f(\rho)|^2\nabla(\rho_{\varepsilon} - \rho)\cdot\nabla\varphi  dxdt\Big|\nonumber\\
=:&J_1+J_2.
\end{align}
For $J_1$, since for any $(t,x)\in[0,T]\times\Omega$, the sequence $|h^\varepsilon|^2$ convergence almost everywhere to $|x-m_\alpha^f(\rho)|^2$ in \eqref{h2-m2} and $\big||h^\varepsilon(x)|^2 - |x- m_\alpha^f(\rho)|^2\big|\le C(\Omega)$, and applying the Dominated Convergence Theorem, we have 
\begin{align}\label{a1}
\lim_{\varepsilon\to 0}\int_0^T\big\||h^\varepsilon(x)|^2 - \big|x- m_\alpha^f(\rho)\big|^2\big\|^2_{L^{\frac{\gamma}{\gamma-1}}(\Omega)}dt =0.   
\end{align}
And taking $r>0$ enough large such that $\Omega\subset B_r(0)$ and using (\ref{nrhos}), we get
\begin{align}\label{a2}
\|\nabla\varphi\cdot\nabla\rho_\varepsilon\|_{L^2(0,T;L^{\gamma}(\Omega))}\leq C(\Omega,T).
\end{align}
Hence combining \eqref{a1} and \eqref{a2}, it holds that
\begin{align}\label{JJ1}
J_1=\||h^\varepsilon(x)|^2 - |x- m_\alpha^f(\rho)|^2\|_{L^2(0,T;L^{\frac{\gamma}{\gamma-1}}(\Omega))}\|\nabla\varphi\cdot\nabla\rho_\varepsilon\|_{L^2(0,T;L^{\gamma}(\Omega))}\to 0\quad {\mbox as}~~\varepsilon\to 0.
\end{align}
For $J_2$, using the weak convergence relation (\ref{Rntovar2}), it follows that
\begin{align}\label{JJ2}
J_2=\Big|\int_0^T\int_{\Omega}|x- m_\alpha^f(\rho)|^2\nabla(\rho_{\varepsilon} - \rho)\cdot\nabla\varphi  dx dt\Big|\to 0 \quad {\mbox as}~~\varepsilon\to 0.
\end{align}
Consequently, substituting \eqref{JJ1} and \eqref{JJ2} into \eqref{JJ}, we have
\begin{align}\label{ii}
&\Big|\int_0^T\int_{\Omega}\big(|h^\varepsilon(x)|^2\nabla\rho_{\varepsilon} - |x -m_\alpha^f(\rho)|^2\nabla\rho\big)\cdot \nabla\varphi dxdt\Big|
\to 0 \qquad \mbox{as} \ \varepsilon\to 0.    
\end{align}

(iii) We confirm
\begin{align} 
&\Big|\int_0^T\int_{\Omega}\Big(\nabla (|h^\varepsilon|^2)\rho_{\varepsilon} - \nabla (\big|(x - m_\alpha^f(\rho))\big|^2)\rho\Big)\cdot\nabla\varphi dxdt\Big|\nonumber\\
\le & \Big|\int_0^T\int_{\Omega} \nabla\Big( |h^\varepsilon|^2 - \big|(x - m_\alpha^f(\rho))\big|^2\Big) \cdot\nabla\varphi \rho_{\varepsilon} dxdt\Big|\nonumber\\
&+\Big|\int_0^T\int_{\Omega} \nabla(\big|(x - m_\alpha^f(\rho))\big|^2)(\rho_{\varepsilon} - \rho)\cdot\nabla\varphi  dxdt\Big|\nonumber\\
\le & \|\nabla(|h^\varepsilon(x)|^2) - \nabla(|x- m_\alpha^f(\rho)|)^2\|_{L^2(0,T;L^2(\Omega))}\|\nabla\varphi\rho_\varepsilon
\|_{L^2(0,T;L^2(\Omega))}\nonumber\\
&+\| \nabla(\big|(x - m_\alpha^f(\rho))\big|^2)\cdot\nabla\varphi\|_{L^{(q^*)'}(0,T;L^{q'}(\Omega))}\|\rho_{\varepsilon} - \rho\|_{L^{q^*}(0,T;L^{q}(\Omega))},
\end{align}
where $1/{q'} + 1/q = 1$ and $1/{(q^*)'} + 1/{q^*} = 1$. 
Here, since for any $(t,x)\in[0,T]\times\Omega$, the sequence $\nabla(|h^\varepsilon|^2)$ convergence almost everywhere to $\nabla(|x-m_\alpha^f(\rho)|^2)$ in \eqref{gh2-gm2} and $\big|\nabla(|h^\varepsilon(x)|^2) - \nabla(|x- m_\alpha^f(\rho)|^2)\big|\le C(\Omega)$, applying the Dominated Convergence Theorem, we have 
\begin{align*}
\lim_{\varepsilon\to 0}\int_0^T\int_{\Omega}\big|\nabla|h^\varepsilon(x)|^2 - \nabla|x- m_\alpha^f(\rho)|^2\big|^2dxdt =0.
\end{align*}
Thus, we obtain 
\begin{align}\label{iii}
\Big|\int_0^T\int_{\Omega}\Big(\nabla (|h^\varepsilon|^2)\rho_{\varepsilon} - \nabla (\big|(x - m_\alpha^f(\rho))\big|^2)\rho\Big)\cdot\nabla\varphi dxdt\Big|  \to 0\quad \mathrm{as}\ \varepsilon\to0.  
\end{align}
   
\end{proof*}

Therefore, the four convergence relations hold in \eqref{i0},\eqref{i}, \eqref{ii} and \eqref{iii}.
For the above mentioned test function $\varphi\in\mathcal{C}_c^\infty((0,T)\times\R^d)$, we deduce $\rho$ satisfies the following equality of integration
\begin{align*}
&\int_0^T\int_{\R^d}\partial_t\rho \varphi+  \lambda\big(\big(x-m_\alpha^f(\rho)\big)\rho\big)\cdot\nabla\varphi+\frac{\sigma^2}{2}\nabla\big(\big|x-m_\alpha^f(\rho)\big|^2 \rho\big)\cdot\nabla\varphi dxdt=0.
\end{align*}
That gives $\rho$ is a weak solution of the equation \eqref{pde}.

One can further show that the solutions obtained above are actually unique.
\begin{prop}\textup{(Uniqueness)}
Let $f$ satisfy Assumption \ref{ass}, the non-negative  initial data $\rho_0\in L^1\cap L^2(\R^d)\cap \mathcal{P}_4(\R^d)$. Then, the weak solution $\rho$ to \eqref{pde} is unique.
\end{prop}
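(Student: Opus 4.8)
The plan is to derive the uniqueness for \eqref{pde} from the well-posedness of the McKean--Vlasov equation \eqref{CBO: mfq} established in \cite[Theorem 3.1, Theorem 3.2]{carrillo2018analytical} (for non-negative $\rho_0\in\mathcal{P}_4(\R^d)$), the bridge between the two being the superposition principle for Fokker--Planck equations. So let $\rho^1,\rho^2$ be two weak solutions of \eqref{pde} in the sense of Definition \ref{def} with the common initial datum $\rho_0\in L^1\cap L^2(\R^d)\cap\mathcal{P}_4(\R^d)$. First I would check that each $\rho^i_t$ is, for a.e.\ $t$, a probability density with a uniformly bounded fourth moment: the bounds $\rho^i\in L^\infty(0,T;L^1)$, $|x|^4\rho^i\in L^\infty(0,T;L^1)$ and $\partial_t\rho^i\in L^2(0,T;W^{-2,4/3}_{loc})$ allow one to take $\varphi\to 1$ in the weak formulation of Definition \ref{def}(ii), whence $\frac{d}{dt}\int_{\R^d}\rho^i_t\,dx=0$ and $\int_{\R^d}\rho^i_t\,dx=\int_{\R^d}\rho_0\,dx=1$; the same time-regularity yields a narrowly continuous representative $t\mapsto\rho^i_t$, which, thanks to the uniform fourth-moment bound \eqref{M4}, is in fact $W_2$-continuous, and then Lemma \ref{lem: useful estimates} shows $t\mapsto m_\alpha^f(\rho^i_t)$ is bounded and continuous on $[0,T]$.

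Next, for each $i$, I would \emph{freeze} the consensus point and regard $\rho^i$ as a distributional solution of the \emph{linear} Fokker--Planck equation
\[
\partial_t\rho^i=\lambda\nabla\cdot\big((x-m_\alpha^f(\rho^i_t))\rho^i\big)+\frac{\sigma^2}{2}\Delta\big(|x-m_\alpha^f(\rho^i_t)|^2\rho^i\big),
\]
whose drift grows like $1+|x|$ and whose diffusion coefficient like $1+|x|^2$. Since $\rho^i_t$ has a uniformly bounded second moment, the integrability condition $\int_0^T\int_{\R^d}\big(|x-m_\alpha^f(\rho^i_t)|+|x-m_\alpha^f(\rho^i_t)|^2\big)\rho^i_t\,dx\,dt<\infty$ holds, so the superposition principle produces a filtered probability space carrying a Brownian motion $B$ and a process $\OX^i$ with $\mathrm{Law}(\OX^i_0)=\rho_0$, $\mathrm{Law}(\OX^i_t)=\rho^i_t$ for all $t\in[0,T]$, and
\[
d\OX^i_t=-\lambda\big(\OX^i_t-m_\alpha^f(\rho^i_t)\big)dt+\sigma\,|\OX^i_t-m_\alpha^f(\rho^i_t)|\,dB_t .
\]
As $m_\alpha^f(\rho^i_t)=m_\alpha^f\big(\mathrm{Law}(\OX^i_t)\big)$ by construction, $\OX^i$ is in fact a solution of the \emph{nonlinear} equation \eqref{CBO: mfq} with initial law $\rho_0\in\mathcal{P}_4(\R^d)$.

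The well-posedness of \eqref{CBO: mfq} then forces $\mathrm{Law}(\OX^1_t)=\mathrm{Law}(\OX^2_t)$ for all $t\in[0,T]$, i.e.\ $\rho^1_t=\rho^2_t$ for a.e.\ $t$, which is the assertion. The step I expect to be the main obstacle is the passage to the SDE: one must verify, with only the weak regularity of Definition \ref{def} and the moment bound \eqref{M4} at hand, both the mass conservation with narrow time-continuity and, above all, the global integrability hypothesis of the superposition principle for coefficients with quadratic diffusion growth; the degenerate, nonlocal features that made \emph{existence} delicate are by contrast innocuous here, since the consensus point is a fixed function of $t$ after freezing. As a more self-contained alternative one may re-prove the needed SDE uniqueness inline, by coupling the two linear SDEs above through the same Brownian motion and initial condition and running a Gr\"onwall estimate on $\EE|\OX^1_t-\OX^2_t|^2$, bounding the drift and diffusion differences via $|m_\alpha^f(\rho^1_t)-m_\alpha^f(\rho^2_t)|\le L_m W_2(\rho^1_t,\rho^2_t)\le L_m\big(\EE|\OX^1_t-\OX^2_t|^2\big)^{1/2}$ from \eqref{lemeq1}.
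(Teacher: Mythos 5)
Your proposal is correct and follows the same overall strategy as the paper -- reduce the uniqueness of the nonlinear PDE to the uniqueness of the McKean--Vlasov SDE \eqref{CBO: mfq} from \cite[Theorem 3.2]{carrillo2018analytical} by freezing the consensus point $m_\alpha^f(\rho_t)$ -- but the bridge you build between the weak PDE solution and the SDE is genuinely different. The paper constructs the strong solution $X$ of the frozen linear SDE \eqref{linearSDE} directly, observes via It\^o's formula that $\rho^X_t=\mathrm{Law}(X_t)$ solves the linear PDE \eqref{linearPDE}, and then proves uniqueness for that linear PDE by an $L^2$ energy estimate on the difference of two solutions, concluding $\rho=\rho^X$; you instead invoke the superposition principle to lift the given weak solution $\rho^i$ itself to a process $\OX^i$ with $\mathrm{Law}(\OX^i_t)=\rho^i_t$, which bypasses the need to compare two solutions of the linear PDE altogether. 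Each route has a distinct technical burden: the paper's energy estimate requires enough regularity of both $\rho$ and $\rho^X$ to justify the integrations by parts against the degenerate quadratic diffusion coefficient (formal for a general weak solution in the class of Definition \ref{def}, and $\rho^X$ is a priori only a measure flow), whereas your route requires verifying narrow continuity of $t\mapsto\rho^i_t$ and the integrability hypothesis $\int_0^T\int(|b|+|a|)\rho^i_t\,dx\,dt<\infty$ of the superposition principle -- both of which you correctly identify and which do follow from the moment bound \eqref{M4} and the time-regularity in Definition \ref{def}, since Trevisan-type superposition theorems impose no growth condition on the coefficients beyond this integrability. Your explicit verification that each $\rho^i_t$ is a probability density (mass conservation via $\varphi\to 1$) and that $t\mapsto m_\alpha^f(\rho^i_t)$ is continuous via Lemma \ref{lem: useful estimates} is a point of care that the paper's proof leaves implicit.
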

\begin{proof}
Let $\rho$ be any weak solution to the nonlinear PDE \eqref{pde}, and we consider the following linear SDE
        \begin{align}\label{linearSDE}
   d{X}_t = -\lambda\big({X}_t - m_\alpha^f(\rho_t)\big)dt + \sigma |{X}_t - m_\alpha^f(\rho_t)|dB_t\,.
\end{align}
Then the standard SDE theory tells us that \eqref{linearSDE} has a unique strong solution $X$, and one can define $\rho_t^X:=\mbox{Law }(X_t)$. Applying It\^{o}'s formula it implies that $\rho^X$ is a weak solution to the following linear PDE
\begin{equation}\label{linearPDE}
    \partial_t\rho^X= \lambda\nabla\cdot\big((x-m_\alpha^f(\rho_t))\rho^X\big) + \frac{\sigma^2}{2}\Delta\big(|x-m_\alpha^f(\rho_t)|^2\rho^X\big)\,.
\end{equation} 
Moreover, the solution to \eqref{linearPDE} is unique and we check it in the following. Let $\rho_1^X$ and $\rho_2^X$ be two  different solutions to \eqref{linearPDE} with the same initial data. Then $\overline{\rho^X}=\rho_1^X-\rho_2^X$ solves the problem
\begin{align}\label{pde1-2}
\begin{cases}
    &\partial_t\overline{\rho^X} = \lambda\nabla\cdot\big((x-m_\alpha^f(\rho_t))\overline{\rho^X}\big)+ \frac{\sigma^2}{2}\Delta\big(|x-m_\alpha^f(\rho_t)|^2\overline{\rho^X}\big),\\
&\overline{\rho^X} (0,x)=0.
\end{cases}
\end{align}
We multiply $\overline{\rho^X}$ on both sides of the equation \eqref{pde1-2}  and integrate over $\R^d$. Then, for all $0<t<T$, we have
\begin{align*}
\frac{1}{2}\frac{d}{dt}\int_{\R^d} |\overline{\rho^X}|^2dx 
=& -\lambda\int_{\R^d}\nabla\overline{\rho^X}\cdot\big(x-m_\alpha^f(\rho_t)\big)\overline{\rho^X}dx-\frac{\sigma^2}{2}\int_{\R^d}\nabla\overline{\rho^X}\cdot\nabla\big(\big|x-m_\alpha^f(\rho_t)\big|^2\overline{\rho^X}\big)dx\nonumber\\
=&-(\lambda + \sigma^2)\int_{\R^d}\nabla\overline{\rho^X}\cdot\big(x-m_\alpha^f(\rho_t)\big)\overline{\rho^X}dx-\frac{\sigma^2}{2}\int_{\R^d}|\nabla\overline{\rho^X}|^2\big|x-m_\alpha^f(\rho_t)\big|^2dx\nonumber\\
=&\frac{d}{2}(\lambda + \sigma^2)\int_{\R^d}|\overline{\rho^X}|^2dx-\frac{\sigma^2}{2}\int_{\R^d}|\nabla\overline{\rho^X}|^2\big|x-m_\alpha^f(\rho_t)\big|^2dx.
\end{align*}
Gr\"onwall's inequality then implies that for a fixed $T$ and any $t\in[0,T]$, 
$$\|\overline{\rho^X}\|_{L^2(\R^d)} \le \exp{\{\frac{d}{2}(\lambda + \sigma^2)T\}}\|\overline{\rho^X}(0,x)\|_{L^2(\R^d)} = 0.$$
Thus, the \eqref{linearPDE} has a unique solution. By assumption we know that $\rho$ is also a weak solution to the above linear PDE.
This means we must have that $\rho=\rho^X$. Thus $(X,\rho)=(X,\rho^X)$ is a solution to the nonlinear SDE 
\begin{align}\label{nonlinear}
   d\overline{X}_t = -\lambda\big(\overline{X}_t - m_\alpha^f(\rho_t)\big)dt + \sigma |\overline{X}_t - m_\alpha^f(\rho_t)|dB_t,\quad\mbox{ Law }(\OX_t)=\rho_t.
\end{align}
In summary, we have shown that any weak solution to the nonlinear PDE \eqref{pde} is the distribution of the nonlinear SDE \eqref{nonlinear}. By \cite[Theorem 3.2]{carrillo2018analytical}, the nonlinear SDE \eqref{nonlinear} has a unique strong solution. This concludes that the nonlinear PDE \eqref{pde} also has a unique solution.
\end{proof}
\section{The boundedness and $H^2$-regularity of solutions}
In this section, we prove Theorem \ref{thm2}, which primarily addresses the regularity of solutions. It enables us to obtain solutions with more favorable properties.
We first establish the $L^{\infty}$-estimate for the solution $\rho$ to the equation \eqref{pde}.
\begin{prop}\label{lem4.2}\textup{($L^\infty$-boundedness)}
Assume that $f$ satisfies Assumption \ref{ass}, and let the non-negative 
 initial data $\rho_0\in L^1\cap L^\infty(\R^d)\cap \mathcal{P}_4(\RR^d)$ and $\rho$ be the solution to the problem \eqref{pde}. Then, for any fixed $T>0$ , $\rho$ satisfies 
\begin{align}\label{rho}
\rho\in L^{\infty}(0,T; L^p(\R^d)), \qquad p\in[2,\infty].
\end{align}
\end{prop}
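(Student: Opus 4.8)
The strategy is to derive uniform $L^p$ bounds on the regularized solutions $\rho_\varepsilon$ of \eqref{pdes1} (i.e. $\rho_{\varepsilon,1/\varepsilon}$ in \eqref{pdesR}) with constants independent of \emph{both} $\varepsilon$ and $p$, then let $p\to\infty$ to get an $L^\infty$ bound, and finally pass $\varepsilon\to 0$. Since $\rho_0\in L^1\cap L^\infty(\mathbb{R}^d)\cap\mathcal{P}_4(\mathbb{R}^d)$ implies $\rho_0\in L^1\cap L^2(\mathbb{R}^d)\cap\mathcal{P}_4(\mathbb{R}^d)$, the uniqueness result above guarantees that $\rho$ coincides with the limit of the approximating sequence $\{\rho_\varepsilon\}$ constructed in Section 3, so we may freely use that sequence together with the a.e.\ convergence $\rho_\varepsilon\to\rho$ on $[0,T]\times\mathbb{R}^d$ established in the proof of Lemma \ref{lem3.1}.

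First I would fix $p\in[2,\infty)$, multiply \eqref{pdes1} by $p\rho_\varepsilon^{p-1}$ and integrate over $\mathbb{R}^d$, the integration by parts being justified by the smoothness of the regularized solution and the moment/$L^2$ bounds of Remark \ref{rem3.1} (formally via a cutoff whose boundary terms vanish as the cutoff expands). Writing $\rho_\varepsilon^{p-1}\nabla\rho_\varepsilon=\frac1p\nabla(\rho_\varepsilon^p)$, the drift term becomes $\lambda(p-1)\int_{\mathbb{R}^d}\rho_\varepsilon^p\,\nabla\cdot h^\varepsilon\,dx$, which is bounded by $C\lambda(p-1)\|\rho_\varepsilon\|_{L^p}^p$ using $|\nabla\cdot h^\varepsilon|\le C$ from \eqref{hs}. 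The diffusion term splits into a genuinely dissipative piece $-\frac{2\sigma^2(p-1)}{p}\int_{\mathbb{R}^d}\bigl(|h^\varepsilon|^2+\varepsilon^2\bigr)\,\bigl|\nabla(\rho_\varepsilon^{p/2})\bigr|^2\,dx\le 0$, which we discard, plus $\frac{\sigma^2(p-1)}{2}\int_{\mathbb{R}^d}\rho_\varepsilon^p\,\Delta(|h^\varepsilon|^2)\,dx$, bounded by $C\sigma^2(p-1)\|\rho_\varepsilon\|_{L^p}^p$ via $|\Delta(|h^\varepsilon|^2)|\le C$ from \eqref{hs}. Hence $\frac{d}{dt}\|\rho_\varepsilon\|_{L^p}^p\le C(p-1)(\lambda+\sigma^2)\|\rho_\varepsilon\|_{L^p}^p$ with $C$ independent of $p$ and $\varepsilon$, and Grönwall's inequality together with $\|\rho_{0\varepsilon}\|_{L^p}\le\|\rho_0\|_{L^p}$ yields $\|\rho_\varepsilon(t)\|_{L^p}\le e^{C(p-1)(\lambda+\sigma^2)T/p}\|\rho_0\|_{L^p}\le e^{C(\lambda+\sigma^2)T}\|\rho_0\|_{L^p}$ for all $t\in[0,T]$.

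Then I would take $p\to\infty$ and $\varepsilon\to0$. For fixed $\varepsilon$ and $t$, since $\rho_\varepsilon(t,\cdot)\in L^1\cap L^\infty(\mathbb{R}^d)$ we have $\|\rho_\varepsilon(t)\|_{L^\infty}=\lim_{p\to\infty}\|\rho_\varepsilon(t)\|_{L^p}\le e^{C(\lambda+\sigma^2)T}\|\rho_0\|_{L^\infty}$, using $\|\rho_0\|_{L^p}\le\|\rho_0\|_{L^1}^{1/p}\|\rho_0\|_{L^\infty}^{1-1/p}\to\|\rho_0\|_{L^\infty}$. Thus $\{\rho_\varepsilon\}$ is uniformly bounded in $L^\infty(0,T;L^p(\mathbb{R}^d))$ for every $p\in[2,\infty]$. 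Passing to the limit along the a.e.-convergent subsequence, Fatou's lemma gives, for a.e.\ $t\in[0,T]$ and every $p\in[2,\infty)$, $\int_{\mathbb{R}^d}\rho(t)^p\,dx\le\liminf_{\varepsilon\to0}\int_{\mathbb{R}^d}\rho_\varepsilon(t)^p\,dx\le e^{pC(\lambda+\sigma^2)T}\|\rho_0\|_{L^p}^p$, so $\rho\in L^\infty(0,T;L^p(\mathbb{R}^d))$, and letting $p\to\infty$ once more delivers $\rho\in L^\infty(0,T;L^\infty(\mathbb{R}^d))$. (Alternatively the last step can be done by weak-$*$ lower semicontinuity of the $L^\infty(0,T;L^p)$ norm along the convergence in Lemma \ref{lem3.1}.)

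The main obstacle I anticipate is bookkeeping: ensuring that every constant is simultaneously independent of $\varepsilon$ and of $p$. Concretely, one must verify from \eqref{hs} that $|\nabla\cdot h^\varepsilon|$ and $|\Delta(|h^\varepsilon|^2)|$ stay bounded as $\varepsilon\to 0$ — the point being the near-cancellation $\|h^\varepsilon\|_{L^\infty}\cdot\|\partial^2 h^\varepsilon\|_{L^\infty}\lesssim \frac1\varepsilon\cdot\varepsilon=1$ in the expansion $\Delta(|h^\varepsilon|^2)=2\sum_{j,k}(\partial_{x_j}h^k)^2+2\sum_{j,k}h^k\partial^2_{x_jx_j}h^k$ — and that the Grönwall exponent $C(p-1)/p$ is bounded uniformly in $p$, which makes the two limits $p\to\infty$ and $\varepsilon\to0$ harmless. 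The localization needed to make the integration by parts rigorous for the regularized solutions is routine given the bounds of Remark \ref{rem3.1}.
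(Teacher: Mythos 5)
Your proposal is correct, and its core is the same $L^p$ energy estimate as in the paper: test against $\rho^{p-1}$, integrate by parts so that only $\nabla\cdot(\text{drift})$ and $\Delta(\text{diffusion coefficient})$ appear multiplying $\rho^p$ while the genuinely second-order term is dissipative and can be discarded, apply Gr\"onwall with an exponent $\frac{p-1}{p}\le 1$ uniform in $p$, and send $p\to\infty$. The one structural difference is where the computation is performed: the paper carries it out directly on the limit equation \eqref{pde}, formally testing the weak solution $\rho$ against $\rho^{p-1}$ and integrating by parts against the unbounded coefficient $|x-m_\alpha^f(\rho)|^2$, whereas you perform it on the smooth regularized solutions of \eqref{pdes1} — where the manipulations are rigorous and the cancellation $\|h^\varepsilon\|_{L^\infty}\|\partial^2 h^\varepsilon\|_{L^\infty}\lesssim \varepsilon^{-1}\cdot\varepsilon=1$ in $\Delta(|h^\varepsilon|^2)$ keeps the constants $\varepsilon$-independent — and then pass to the limit by Fatou along the a.e.\ convergent subsequence, invoking uniqueness to identify the limit with $\rho$. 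This buys you a cleaner justification of the integration by parts at the cost of the extra limit passage; the quantitative bound $\|\rho(t)\|_{L^\infty}\le e^{C(\lambda+\sigma^2)T}\|\rho_0\|_{L^\infty}$ you obtain matches the paper's. One cosmetic remark: your interpolation $\|\rho_0\|_{L^p}\le\|\rho_0\|_{L^1}^{1/p}\|\rho_0\|_{L^\infty}^{1-1/p}$ is the correct form (the paper's displayed version drops the exponent on the $L^1$ factor), so your limit $\|\rho_0\|_{L^p}\to\|\rho_0\|_{L^\infty}$ is the right way to conclude.
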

\begin{proof}
For any $2\le p<\infty$, multiply $\rho^{p-1}$ to the both hand in the regularization equation \eqref{pde} to get that for any fixed $T>0$, $0\leq t\leq T$, it holds that
\begin{align*}
\frac{1}{p}\frac{d}{dt}\|\rho\|_{L^p(\R^d)}^p\nonumber
=&-\lambda\int_{\R^d}\nabla(\rho^{p-1})\cdot (x -m_\alpha^f(\rho))\rho dx - \frac{\sigma^2}{2}\int_{\R^d}\nabla(\rho^{p-1})\cdot\nabla\big(|x -m_\alpha^f(\rho) |^2\rho\big)dx\nonumber\\
=& -\frac{\lambda(p-1)}{p}\int_{\R^d}\nabla(\rho^{p})\cdot (x -m_\alpha^f(\rho))dx \nonumber\\
&- \frac{\sigma^2}{2}\int_{\R^d}\nabla(\rho^{p-1})\cdot\big(2(x -m_\alpha^f(\rho))\rho + |x -m_\alpha^f(\rho) |^2\nabla\rho\big)dx \nonumber\\
=& \frac{p-1}{p} d(\lambda + \sigma^2)\int_{\R^d}|\rho|^p dx 
-
\frac{2\sigma^2(p-1)}{p^2}
\int_{\R^d}|\nabla\rho^\frac{p}{2}|^2
|x -m_\alpha^f(\rho)|^2dx\nonumber\\
\le& 
\frac{p-1}{p} d(\lambda + \sigma^2)~\|\rho\|_{L^p(\R^d)}^p .
\end{align*}
The Gr\"onwall's inequality implies that $\rho$ satisfies 
\begin{align*}
\|\rho\|_{L^p(\R^d)}
\le& 
\exp{\Big(\frac{p-1}{p} d(\lambda + \sigma^2)t\Big)}~\|\rho_0\|_{L^p(\R^d)}\\
\leq& \exp{\Big(\frac{p-1}{p} d(\lambda + \sigma^2)t\Big)}~\|\rho_0\|_{L^1(\R^d)}\|\rho_0\|^{\frac{p-1}{p}}_{L^\infty(\R^d)},\quad 2\leq p<\infty.
\end{align*}
Therefore, for any $t\in[0,T]$ we have
\begin{align*}
\|\rho\|_{L^\infty(\R^d)} =\lim_{p\to\infty}\|\rho\|_{L^p(\R^d)}
\le \exp{\big(d(\lambda + \sigma^2)t\big)}~\|\rho_0\|_{L^1(\R^d)}\|\rho_0\|_{L^\infty(\R^d)}.
\end{align*}
The demonstration of this lemma is hereby concluded.
\end{proof}

Next, we proceed to demonstrate the improved $H^2$-regularity property.
\begin{prop}\textup{($H^2$-regularity)}
Let $f$ satisfy Assumption \ref{ass}, the non-negative  initial data $\rho_0 \in L^1 \cap H^2(\R^d)\cap \mathcal{P}_4(\RR^d)$. Then the solution $\rho$ to problem \eqref{pde} satisfies
\begin{align}\label{h2reg}
\rho\in L^\infty(0,T;H^2(\R^d)),\quad   
\partial_t\rho\in L^\infty(0,T;L^2_{loc}(\mathbb{R}^d)).
\end{align}
\end{prop}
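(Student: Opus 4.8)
The strategy is to lift the $\varepsilon$-uniform bounds of Section~2 to an $\varepsilon$-uniform $H^2$-bound for the regularized densities $\rho_\varepsilon$ solving \eqref{pdes1}, and then pass to the limit $\varepsilon\to0$ along the subsequence of Lemma~\ref{lem3.1}. All the differential identities below are first derived for $t>0$, where parabolic smoothing makes $\rho_\varepsilon(t,\cdot)$ as smooth and decaying as needed to justify the integrations by parts, and then extended to $[0,T]$ by continuity and the bound $\|\rho_{0\varepsilon}\|_{H^2(\R^d)}\le\|\rho_0\|_{H^2(\R^d)}$ (convolution with $J_\varepsilon$ commutes with derivatives and contracts $L^2$). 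We may take $\phi\in C^\infty$, which does not affect the construction. Three structural facts will be used repeatedly: (a) by \eqref{hs}, $\|\partial^\beta h^\varepsilon\|_{L^\infty(\R^d)}\le C\varepsilon^{|\beta|-1}$ for every multi-index with $|\beta|\ge1$, so $\nabla\cdot h^\varepsilon$ and $\nabla h^\varepsilon$ are bounded uniformly in $\varepsilon$ and $\partial^\beta h^\varepsilon$ is small for $|\beta|\ge2$, whereas $\|h^\varepsilon\|_{L^\infty}$ is only $O(1/\varepsilon)$; (b) $\nabla(|h^\varepsilon|^2)=2\sum_i h^i\nabla h^i$, hence $|\nabla(|h^\varepsilon|^2)|\le C|h^\varepsilon|$ and $|\Delta(|h^\varepsilon|^2)|\le C$; (c) the $\R^d$-identities $\|\nabla^2 f\|_{L^2}=\|\Delta f\|_{L^2}$, $\|\nabla^3 f\|_{L^2}=\|\nabla\Delta f\|_{L^2}$, and $\|\nabla f\|_{L^2}^2\le\|f\|_{L^2}\|\Delta f\|_{L^2}$.

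\textbf{The $\varepsilon$-uniform $H^2$-estimate.} Apply $\Delta$ to \eqref{pdes1}, multiply by $\Delta\rho_\varepsilon$, integrate over $\R^d$, and integrate each of the two resulting terms by parts once:
\[
\tfrac12\tfrac{d}{dt}\|\Delta\rho_\varepsilon\|_{L^2(\R^d)}^2=-\lambda\int_{\R^d}\nabla\Delta\rho_\varepsilon\cdot\Delta(h^\varepsilon\rho_\varepsilon)\,dx-\tfrac{\sigma^2}{2}\int_{\R^d}\nabla\Delta\rho_\varepsilon\cdot\nabla\Delta\big((|h^\varepsilon|^2+\varepsilon^2)\rho_\varepsilon\big)\,dx.
\]
Expanding the two Leibniz sums, the leading piece of the second term is the dissipation $-\tfrac{\sigma^2}{2}\int_{\R^d}(|h^\varepsilon|^2+\varepsilon^2)|\nabla\Delta\rho_\varepsilon|^2\,dx$, which dominates $-\tfrac{\sigma^2}{2}\int_{\R^d}|h^\varepsilon|^2|\nabla\Delta\rho_\varepsilon|^2\,dx$. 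Every other term containing $\nabla\Delta\rho_\varepsilon$ is disposed of in one of three ways: (i) the transport contribution $h^\varepsilon\cdot\nabla\Delta\rho_\varepsilon$ integrates in divergence form against $\Delta\rho_\varepsilon$, giving $\tfrac12\int(\nabla\cdot h^\varepsilon)|\Delta\rho_\varepsilon|^2$, bounded by (a); (ii) any term carrying a factor $\nabla(|h^\varepsilon|^2)$ is, by (b) and Young's inequality, absorbed into $\tfrac{\sigma^2}{4}\int|h^\varepsilon|^2|\nabla\Delta\rho_\varepsilon|^2$ at the price of a multiple of $\|\Delta\rho_\varepsilon\|_{L^2}^2$ (or of $\|\nabla\rho_\varepsilon\|_{L^2}^2+\|\rho_\varepsilon\|_{L^2}^2$); (iii) any term in which $\nabla\Delta\rho_\varepsilon$ meets a bounded coefficient built from $\partial^\beta h^\varepsilon$, $|\beta|\ge1$, is integrated by parts once more, turning $\nabla\Delta\rho_\varepsilon$ into $\Delta\rho_\varepsilon$ against a coefficient that is $O(1)$ or $O(\varepsilon)$ by (a), plus lower-order terms. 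All second-order contributions reduce via (c) to $\|\Delta\rho_\varepsilon\|_{L^2}^2$, and lower-order ones to $\|\nabla\rho_\varepsilon\|_{L^2}^2+\|\rho_\varepsilon\|_{L^2}^2$; handling $\|\nabla\rho_\varepsilon\|_{L^2}^2\le\|\rho_\varepsilon\|_{L^2}\|\Delta\rho_\varepsilon\|_{L^2}\le\tfrac12\|\rho_\varepsilon\|_{L^2}^2+\tfrac12\|\Delta\rho_\varepsilon\|_{L^2}^2$, we obtain
\[
\tfrac{d}{dt}\|\Delta\rho_\varepsilon\|_{L^2(\R^d)}^2\le C\big(\|\Delta\rho_\varepsilon\|_{L^2(\R^d)}^2+\|\rho_\varepsilon\|_{L^2(\R^d)}^2\big)
\]
with $C$ independent of $\varepsilon$. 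Since $\|\Delta\rho_{0\varepsilon}\|_{L^2}\le\|\rho_0\|_{H^2}$ and $\|\rho_\varepsilon\|_{L^\infty(0,T;L^2)}\le C$ by Lemma~\ref{lem2.1}, Grönwall's inequality gives $\|\Delta\rho_\varepsilon\|_{L^\infty(0,T;L^2(\R^d))}\le C$, whence, by (c) and Lemma~\ref{lem2.1}, $\|\rho_\varepsilon\|_{L^\infty(0,T;H^2(\R^d))}\le C$ uniformly in $\varepsilon$.

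\textbf{Passing to the limit and the time derivative.} By the uniform bound just obtained and weak-$*$ compactness, a subsequence of $\{\rho_\varepsilon\}$ converges weakly-$*$ in $L^\infty(0,T;H^2(\R^d))$; since $\rho_\varepsilon\to\rho$ strongly by Lemma~\ref{lem3.1}, the limit is the weak solution $\rho$, so $\rho\in L^\infty(0,T;H^2(\R^d))$. For the time derivative, observe that $m_\alpha^f(\rho_t)$ does not depend on $x$, so $\nabla\cdot(x-m_\alpha^f(\rho_t))\equiv d$ and $\Delta|x-m_\alpha^f(\rho_t)|^2\equiv 2d$, and \eqref{pde} may be rewritten as
\[
\partial_t\rho=(\lambda+\sigma^2)d\,\rho+(\lambda+2\sigma^2)\,\big(x-m_\alpha^f(\rho_t)\big)\cdot\nabla\rho+\tfrac{\sigma^2}{2}\,\big|x-m_\alpha^f(\rho_t)\big|^2\,\Delta\rho.
\]
Since $\rho(t,\cdot)\in\mathcal{P}_4(\R^d)$ with $\int_{\R^d}|x|^4\rho\,dx\le M_4$ for a.e.\ $t$ by \eqref{M4}, estimate \eqref{ms} yields $|m_\alpha^f(\rho_t)|\le C$ for a.e.\ $t$; hence on any ball $B_r(0)$ we get $\|\partial_t\rho(t,\cdot)\|_{L^2(B_r(0))}\le C(1+r^2)\,\|\rho(t,\cdot)\|_{H^2(\R^d)}\le C(r)$ for a.e.\ $t\in[0,T]$, i.e.\ $\partial_t\rho\in L^\infty(0,T;L^2_{loc}(\R^d))$, which completes the proof.

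\textbf{Where the difficulty lies.} The crux is the $\varepsilon$-uniform $H^2$-estimate, because only the \emph{degenerate} dissipation $\int|h^\varepsilon|^2|\nabla\Delta\rho_\varepsilon|^2$ is available at top order: its weight vanishes where $h^\varepsilon=0$ (near the consensus point), so third-order terms with a merely bounded coefficient cannot be absorbed into it directly, while far from the consensus point this weight is of size $1/\varepsilon^2$ and the transport coefficient $h^\varepsilon$ is itself large. The first obstruction is overcome by the algebraic identity $\nabla(|h^\varepsilon|^2)=2h^\varepsilon\cdot\nabla h^\varepsilon$, which supplies exactly the factor $|h^\varepsilon|$ needed to match the degenerate weight when absorbing the critical third-order contributions; the second by keeping the drift strictly in divergence form whenever $\nabla\Delta\rho_\varepsilon$ is contracted against $h^\varepsilon$. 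Once these are in place, the near-constancy of $\nabla\cdot h^\varepsilon$ and $\Delta|h^\varepsilon|^2$ and the smallness of the higher derivatives of $h^\varepsilon$ render every remaining term of strictly lower order, and Grönwall's inequality closes the estimate.
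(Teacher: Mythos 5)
Your proof is correct, and it reaches the same bounds through the same underlying energy mechanism as the paper: differentiate the equation twice, test against the second derivative, keep the drift in divergence form at top order so that only $\nabla\cdot h$ (bounded) appears, use $\nabla(|h|^2)=2\sum_i h^i\nabla h^i$ to supply the factor $|h|$ that matches the degenerate weight in the dissipation $\int |h|^2|\nabla\Delta\rho|^2$, and drop that dissipation term after absorbing the critical third-order contributions by Young's inequality. The genuine difference is \emph{where} you run the estimate. The paper applies $D$ and $D^2$ directly to the limiting equation \eqref{pde} and computes $\frac{d}{dt}\int|D\rho|^2$ and $\frac{d}{dt}\int|D^2\rho|^2$ for the weak solution itself; this is cleaner because $x-m_\alpha^f(\rho)$ is affine in $x$, so its higher derivatives vanish exactly and no remainder terms appear, but it is an a priori computation on a function only known to lie in $L^\infty_tL^2_x\cap L^2_tW^{1,\gamma}_{loc}$, so the third-order integrations by parts are formal. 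You instead prove an $\varepsilon$-uniform $\Delta$-energy estimate for the smooth regularized solutions of \eqref{pdes1} and pass to the limit by weak-$*$ compactness in $L^\infty(0,T;H^2)$, identifying the limit with $\rho$ via the strong convergence of Lemma \ref{lem3.1}. This buys rigor (the differentiations are performed on genuinely smooth solutions) at the price of the extra $O(\varepsilon^{|\beta|-1})$ remainder terms coming from the cutoff $\phi^{1/\varepsilon}$, which you correctly show are harmless, noting only that one should take $\phi$ smoother than the $C^2$ assumed in the paper (e.g.\ $C^4$ or $C^\infty$) to make sense of $\Delta h^\varepsilon$ and $\nabla\Delta(|h^\varepsilon|^2)$. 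Your treatment of $\partial_t\rho$ via the nondivergence form $\partial_t\rho=(\lambda+\sigma^2)d\rho+(\lambda+2\sigma^2)(x-m_\alpha^f(\rho))\cdot\nabla\rho+\tfrac{\sigma^2}{2}|x-m_\alpha^f(\rho)|^2\Delta\rho$ together with the moment bound on $m_\alpha^f(\rho)$ is equivalent to the paper's duality argument \eqref{ptrho1}.
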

\begin{proof}
First, we apply the operator $D$ to both sides of \eqref{pde}, multiply the resulting equation by $D\rho$, and integrate the domain to get for any $t\in[0,T]$, it holds that
\begin{align}\label{J1}
\frac{1}{2}\frac{d}{dt}\int_{\R^d}|D\rho|^2 dx 
=&\int_{\R^d}D\rho\cdot D\Big[\lambda\nabla\cdot\big((x-m_\alpha^f(\rho))\rho\big) +\frac{\sigma^2}{2}\Delta\big(|x-m_\alpha^f(\rho)|^2\rho\big)\Big]dx\nonumber\nonumber\\
=&(\lambda +\sigma^2)d\int_{\R^d}|D\rho|^2 dx + (\lambda +2\sigma^2)\int_{\R^d} D\rho\cdot D\big( (x-m_\alpha^f(\rho))\cdot\nabla\rho\big)dx\nonumber\\
&+ \frac{\sigma^2}{2}\int_{\R^d} D\rho\cdot D\big( |x-m_\alpha^f(\rho)|^2\Delta\rho\big)dx\nonumber\\
=:&(\lambda +\sigma^2)d\int_{\R^d}|D\rho|^2 dx + J_1^1 +J_1^2.
\end{align}
For the sake of convenience, we expand $J_1^1$ in its component form.
So when $i,j=1,\cdots, d$, we check
\begin{align*}
\int_{\R^d} D\rho\cdot D\big( (x-m_\alpha^f(\rho))\cdot\nabla\rho\big)dx
=& \int_{\R^d}\sum_{j=1}^d\rho_{x_j}\Big(\sum_{i=1}^d\big((x_i - (m_\alpha^f(\rho))_i)\rho_{x_i}\big)\Big)_{x_j}dx\nonumber\\
=&\int_{\R^d}\sum_{j=1}^d\rho_{x_j}\sum_{i=1}^d\big(\delta_{ij}\rho_{x_i} + (x_i - (m_\alpha^f(\rho))_i)\rho_{x_ix_j}\big)dx.
\end{align*}
Thus, $J_1^1$ can be expressed as
\begin{align}\label{J11}
J_1^1=&(\lambda +2\sigma^2)\Big(\int_{\R^d}|D\rho|^2dx + \frac{1}{2}\int_{\R^d}\big(x - m_\alpha^f(\rho)\big)\cdot\nabla|D\rho|^2dx\Big)\nonumber\\
=&(1-\frac{d}{2})(\lambda +2\sigma^2)\int_{\R^d}|D\rho|^2dx.
\end{align}
For $J_1^2$, using integration by parts, we have
\begin{align}\label{J12}
J_1^2 =& \frac{\sigma^2}{2}\int_{\R^d} \nabla\rho\cdot \nabla\big( |x-m_\alpha^f(\rho)|^2\Delta\rho\big)dx\nonumber\\
=& -\frac{\sigma^2}{2}\int_{\R^d}  |x-m_\alpha^f(\rho)|^2|\Delta\rho|^2dx.
\end{align}
Substitute \eqref{J11} and \eqref{J12} into \eqref{J1}, we can verify
\begin{align}\label{D2}
\frac{1}{2}\frac{d}{dt}\int_{\R^d}|D\rho|^2 dx =& (1-\frac{d}{2})(\lambda +2\sigma^2)\int_{\R^d}|D\rho|^2dx - \frac{\sigma^2}{2}\int_{\R^d}|x -m_\alpha^f(\rho)|^2|\Delta\rho|^2dx \nonumber \\
\le& C(\lambda,\sigma,d)\int_{\R^d}|D\rho|^2dx.
\end{align}
Applying Gr\"onwall's inequality to \eqref{D2}, we obtain the following result 
\begin{align}\label{Drho}
\|D\rho\|_{L^\infty(0,T;L^{2}(\mathbb{R}^d))}\le C.
\end{align}

Second, applying the operator $D^2$ to both sides of \eqref{pde}, multiplying the resulting equation by $D^2\rho$, and integrating it in $\R^d$, we have for any $t\in[0,T]$, it holds that
\begin{align}\label{J2}
\frac{1}{2}\frac{d}{dt}\int_{\R^d}|D^2\rho|^2 dx 
=&\int_{\R^d}D^2\rho\cdot D^2\Big[\lambda\nabla\cdot\big((x-m_\alpha^f(\rho))\rho\big) +\frac{\sigma^2}{2}\Delta\big(|x-m_\alpha^f(\rho)|^2\rho\big)\Big]dx\nonumber\nonumber\\
=&(\lambda +\sigma^2)d\int_{\R^d}|D^2\rho|^2 dx + (\lambda +2\sigma^2)\int_{\R^d} D^2\rho\cdot D^2\big( (x-m_\alpha^f(\rho))\cdot\nabla\rho\big)dx\nonumber\\
&+ \frac{\sigma^2}{2}\int_{\R^d} D^2\rho\cdot D^2\big( |x-m_\alpha^f(\rho)|^2\Delta\rho\big)dx\nonumber\\
=:&(\lambda +\sigma^2)d\int_{\R^d}|D^2\rho|^2 dx + J_2^1 +J_2^2.
\end{align}
Here $"\cdot"$ denotes the Frobenius inner product of the Hessian matrices. In the following, we compute by using component forms of $J_2^1$ and $J_2^2$. For $J_2^1$, we consider
\begin{align*}
D^2\rho\cdot D^2\big( (x-m_\alpha^f(\rho))\cdot\nabla\rho\big)
=&\sum_{i,j=1}^d\rho_{x_ix_j}\Big(\sum_{k=1}^d(x_k -(m_\alpha^f(\rho))_k)\rho_{x_k}\Big)_{x_ix_j}\\
=&\sum_{i,j=1}^d\rho_{x_ix_j}\sum_{k=1}^d\Big(\delta_{ki}\rho_{x_k} + (x_k -(m_\alpha^f(\rho))_k)\rho_{x_kx_i}\Big)_{x_j}\\
=&\sum_{i,j=1}^d\rho_{x_ix_j}\sum_{k=1}^d\Big(\rho_{x_ix_j} + \delta_{kj}\rho_{x_kx_i} + (x_k -(m_\alpha^f(\rho))_k)\rho_{x_kx_ix_j}\Big)\\
=&2\sum_{i,j=1}^d(\rho_{x_ix_j})^2 +\frac{1}{2}\sum_{i,j,k=1}^d(x_k - (m_\alpha^f(\rho))_k)(\rho_{x_ix_j}^2)_{x_k}.
\end{align*}
Hence, $J_2^1$ may be written 
\begin{align}\label{J21}
J_2^1 =&(\lambda +2\sigma^2)\int_{\R^d}2|D^2\rho|^2 +\frac{1}{2}(x-m_\alpha^f(\rho))\cdot\nabla|D^2\rho|^2dx\nonumber\\
=&(2-\frac{d}{2})(\lambda +2\sigma^2)\int_{\R^d}|D^2\rho|^2dx.
\end{align}
For the component of $J_2^2$, we have 
\begin{align}\label{J221}
&D^2\rho\cdot D^2\big( |x-m_\alpha^f(\rho)|^2\Delta\rho\big)\nonumber\\
=&\sum_{i,j=1}^d\rho_{x_ix_j}\Big(|x -m_\alpha^f(\rho)|^2\Delta\rho\Big)_{x_ix_j}\nonumber\\
=&\sum_{i,j=1}^d\rho_{x_ix_j}\Big(2\delta_{ij}\Delta\rho + 4(x_i -(m_\alpha^f(\rho))_i)(\Delta\rho)_{x_j} + |x -m_\alpha^f(\rho)|^2(\Delta\rho)_{x_ix_j}\Big)\nonumber\\
=&2|\Delta\rho|^2 + \sum_{i,j=1}^d\Big(4\rho_{x_ix_j}(x_i -(m_\alpha^f(\rho))_i)(\Delta\rho)_{x_j} +\rho_{x_ix_j}|x -m_\alpha^f(\rho)|^2(\Delta\rho)_{x_ix_j}\Big).
\end{align}
Note that 
\begin{align}
&\int_{\R^d}\sum_{i,j=1}^d\Big(4\rho_{x_ix_j}(x_i -(m_\alpha^f(\rho))_i)(\Delta\rho)_{x_j} +\rho_{x_ix_j}|x -m_\alpha^f(\rho)|^2(\Delta\rho)_{x_ix_j}\Big)dx\nonumber\\
=
&2\int_{\R^d}\sum_{i,j=1}^d\rho_{x_ix_j}(x_i -(m_\alpha^f(\rho))_i)(\Delta\rho)_{x_j} dx
-\int_{\R^d}\sum_{i,j=1}^d\rho_{x_ix_ix_j}|x -m_\alpha^f(\rho)|^2(\Delta\rho)_{x_j}dx\nonumber\\
=& -2\int_{\R^d}\sum_{i,j=1}^d\Big(\rho_{x_ix_jx_j}(x_i -(m_\alpha^f(\rho))_i) - \delta_{ij}\rho_{x_ix_j}\Big)\Delta\rho dx\nonumber\\
&-\int_{\R^d}\sum_{i,j=1}^d\rho_{x_ix_ix_j}|x -m_\alpha^f(\rho)|^2(\Delta\rho)_{x_j}dx.
\end{align}
Consequently, by integrating both sides of the \eqref{J221}, we can express the $J_2^2$ as
\begin{align}\label{J22}
J_2^2
=&\frac{\sigma^2}{2} \int_{\R^d}2|\Delta\rho|^2 - \nabla|\Delta
\rho|^2\cdot(x-m_\alpha^f(\rho)) +2 |\Delta\rho|^2dx\nonumber\\
&-\frac{\sigma^2}{2} \int_{\R^d}|\nabla(\Delta\rho)|^2|x-m_\alpha^f(\rho)|^2 dx \nonumber\\
= & \frac{(4+d)\sigma^2}{2}\int_{\R^d}|D^2\rho|^2 dx-\frac{\sigma^2}{2} \int_{\R^d}|\nabla(\Delta\rho)|^2|x-m_\alpha^f(\rho)|^2 dx.
\end{align}
Substituting \eqref{J21} and \eqref{J22} into \eqref{J2} yields
\begin{align}\label{D22}
\frac{d}{dt}\int_{\R^d}|D^2\rho|^2 dx  =& C(\lambda,\sigma,d)\int_{\R^d}|D^2\rho|^2 dx 
-\sigma^2\int_{\R^d}|x -m_\alpha^f(\rho)|^2|\nabla(\Delta\rho)|^2dx\nonumber\\
\le &C(\lambda,\sigma,d)\int_{\R^d}|D^2\rho|^2 dx.
\end{align}
Again applying Gr\"onwall's inequality to \eqref{D22},  we derive the following result
\begin{align}\label{Drho2}
\|D^2\rho\|_{L^\infty(0,T;L^{2}(\mathbb{R}^d))}\le C.
\end{align}

Next, we consider the derivative with respect to time. Since $\rho$ is the weak solution, for any test function $\varphi(x)\in C_c^\infty(\R^d)$, $\Omega:=\mathrm{ssup}\varphi$ and $t\in[0,T]$, we deduce
\begin{align}\label{ptrho1}
|<\partial_t\rho,\varphi>|
\le& \lambda\Big|\int_{\Omega}\nabla\varphi\cdot \big((x-m_\alpha^f(\rho))\rho\big) dx\Big|
+\frac{\sigma^2}{2}\Big|\int_{\Omega} \nabla\varphi\cdot\nabla(|x-m_\alpha^f(\rho)|^2 \rho) dx\Big|\nonumber\\
\le& \lambda\Big|\int_{\Omega}\varphi\nabla\cdot \big((x-m_\alpha^f(\rho))\rho\big) dx\Big|
+\frac{\sigma^2}{2}\Big|\int_{\Omega} \varphi\Delta(|x-m_\alpha^f(\rho)|^2 \rho) dx\Big|\nonumber\\
\le& d(\lambda + \sigma^2)\Big|\int_{\Omega}\varphi\rho dx\Big| + (\lambda + 2\sigma^2)\Big|\int_{\Omega}\varphi (x-m_\alpha^f(\rho))\cdot\nabla\rho dx\Big|\nonumber\\
& + \frac{\sigma^2}{2}\Big|\int_{\Omega} \varphi|x-m_\alpha^f(\rho)|^2 \Delta\rho dx\Big|\nonumber\\
\le& C(\Omega)\Big(\|\rho\|_{L^2(\R^d)} + \|\nabla\rho\|_{L^2(\Omega)} + \|\Delta\rho\|_{L^2(\Omega)}\Big)\|\varphi\|_{L^2(\Omega)}.
\end{align}
Hence, we have  
\begin{align}\label{Drhot}
&\|\partial_t\rho\|_{L^\infty(0,T;L^2_{loc}(\mathbb{R}^d))}\le C.
\end{align}

Therefore, by \eqref{Drho}, \eqref{Drho2}
and \eqref{Drhot}, we obtain that the \eqref{h2reg} holds true. These estimates of solution $\rho$ guarantee the existence of strong solutions to the model \eqref{pde}.
\end{proof}

\bibliographystyle{abbrv}
\bibliography{refCBO}
\end{document}